\newcolumntype{P}[1]{>{\centering\arraybackslash}p{#1}}
\crefname{hypothesis}{Hypothesis}{Hypotheses}
\newcommand*{\addFileDependency}[1]{
  \typeout{(#1)}
  \@addtofilelist{#1}
  \IfFileExists{#1}{}{\typeout{No file #1.}}
}
\title{On Weighted Generalized Gauss Quadratures for M\"{u}ntz Systems\thanks{This research is partially supported by NSFC grant 11971408.}}
\author{Huaijin Wang\thanks{School of Mathematical Sciences, Xiamen University, 361005 Xiamen, China.}
\and Chuanju Xu\thanks{Corresponding author, School of Mathematical Sciences, Xiamen University, 361005 Xiamen, China. Email: \email{cjxu@xmu.edu.cn}.}
}
\begin{document}

\maketitle

\begin{abstract}
A novel recurrence formula for moments with respect to M\"{u}ntz-Legendre polynomials is proposed and applied to construct a numerical method for solving generalized Gauss quadratures with power function weight for M\"{u}ntz systems. These quadrature rules exhibit several properties similar to the classical Gaussian quadratures for polynomial systems, including positive weights, rapid convergence, and others. They are applicable to a wide range of functions, including smooth functions and functions with endpoint singularities, commonly found in  integral equations with singular kernels, complex analysis, potential theory, and other areas.
\end{abstract}

\begin{keywords}
  Numerical integration, Generalized Gauss quadrature, Moments recurrence, M\"{u}ntz polynomials
\end{keywords}


\section{Introduction}
Let $\omega(x)$ be a continuous function that is positive almost everywhere in the interval $[a,b]$. Let $\varphi(x)$ be an integrable function on $[a,b]$. If $\varphi(x)\omega(x)$ is integrable, we refer to its integral as the integral of $\varphi(x)$ with respect to the weight function $\omega(x)$, denoted by
\begin{equation}
I[\varphi]  = \int_a^b \varphi(x) \omega(x) \mathrm{d} x.
\label{symbol_integral}
\end{equation}
According to the definition of integration, we can select certain nodes $x_k$ on the interval $[a,b]$, and then approximate the value of $I[\varphi]$ through a weighted average of $\varphi(x_k)$. This leads to a numerical quadrature with the following form:
\begin{equation}
Q_N[\varphi]  = \sum_{k=0}^{N} \varphi(x_k) w_k ,
\label{quadrature1}
\end{equation}
where $x_k \in [a,b]$ and $w_k \in \mathbb{R}$ for $k=0,1,\cdots,N$. $x_k$ are called the nodes of the quadrature \cref{quadrature1}, and $\omega_k$ are the corresponding weights.  The selection of $\{\omega_k\}_{k=0}^N$ depends only on $\{x_k\}_{k=0}^N$ and not on the specific form of the integrand $\varphi(x)$.

We use $Q_N[\varphi]$ as an approximation of $I[\varphi]$.
When the nodes $x_k$ are chosen as the zeros of orthogonal polynomials of degree $N+1$ with respect to $\omega(x)$, and the weights $\omega_k$ are determined by the corresponding interpolation polynomials of the nodes, the resulting numerical quadrature $Q_N[\cdot]$, known as Gauss quadrature, achieves the highest algebraic accuracy of $2N+1$.

Gauss quadrature offers several advantages. Firstly, all quadrature nodes $x_k$ are located in the interior of $[a,b]$, and the weights $\omega_k$ are all positive, which ensures the stability of integration computations \cite{li2001}.  Secondly, the Gaussian rule with $N+1$ points is exact for polynomials of degree up to $2N+1$, resulting in a rapid convergence for integrands that can be well approximated by polynomials.  Moreover, Gaussian quadrature can be computed efficiently due to its connection with orthogonal polynomials, with computational costs scaling as $O(N^2)$ for the classic Golub-Welsch algorithm \cite{golub1969calculation} or $O(N)$ for more specialized methods \cite{glaser2007fast}.

Although Gaussian rule can converge for some singular integrands, such as the $\log$ function or M\"{u}ntz polynomials \cite{borwein1994muntz}, which are commonly encountered in the boundary element method and the finite element method for solving partial differential equations, the convergence rate is typically low and using large nodes in quadrature is not recommended. While monomial transformation \cite{lombardi2009}, graded meshes \cite{schwab1994}, or adaptive methods \cite{cools2003} can be used to address these issues, these methods often lack the stability, rapid convergence, and elegance of Gaussian rule.

Gaussian rule can be extended in a natural way to general functions. Let 
\begin{equation}
\label{functions_space}
\left \{ \varphi_0, \varphi_1, \cdots, \varphi_{2N+1} \right  \}
\end{equation}
be a system of linearly independent functions that are usually chosen to be complete in some suitable space of functions \cite{karlin1966,ma1996generalized}. The generalized Gauss quadrature aims to select $x_k$ and $\omega_k$, $k=0,1,\cdots,N$, such that 
\begin{equation}
Q_N[\varphi_j] = I [\varphi_j],\quad j=0,1,\cdots,2N+1.
\label{nolinear_equation}
\end{equation}

The work of Rokhlin and others \cite{ma1996generalized,rokhlin1998generalized,rokhlin1999nonlinear,rokhlin2010nonlinear} has explored the use of generalized Gauss quadrature in numerical algorithms. The method proposed in \cite{ma1996generalized} constructs a mapping from the Hermite system and identifies the zeros of this mapping as the Gaussian nodes. This approach involves a variant of Newton's algorithm coupled with a continuation scheme. Subsequent modifications have been proposed in \cite{rokhlin1998generalized,rokhlin1999nonlinear,rokhlin2010nonlinear} by incorporating preprocessing steps to enhance the robustness of the Gauss quadrature procedures. While these approaches can be used for almost any basis set, they may not be efficient when \cref{functions_space} is restricted to a specific system, such as the M\"{u}ntz system, partly due to the difficulty in estimating the value of this mapping.

Specifically, when constraining the functions in \cref{functions_space} to trigonometric functions \cite{milovanovic2008trigonometric} or spline functions \cite{chen2021explicit}, the corresponding Gauss quadratures have been considered. For the M\"{u}ntz system, a straightforward approach for computing Gauss quadrature was proposed in \cite{milovanovic2005gaussian}. The proposed method utilizes a continuation technique and Newton's method to solve a set of $2N+2$ nonlinear equations \cref{nolinear_equation} for the $2N+2$ unknowns $x_k$ and $\omega_k$. However, the proposed method has some limitations. For instance, when considering a specific M\"{u}ntz sequence $\{\lambda_0,\lambda_1,\cdots,\lambda_{2N+1} \}$ and choosing $\varphi_j$ as the corresponding M\"{u}ntz-Legendre polynomials:
\begin{enumerate}
\item The accuracy of the estimation of $\varphi_j(x)$ may be compromised when $\varphi_j(x)$ is singular at $0$ and $x$ approaches $0$.
\item The method relies on calculating $I[\varphi_j]$ via orthogonality, which can only be done by choosing $\omega(x) = x^{\lambda_0}$.
\item The Jacobian in the Newton equation may not be accurately computed when $\varphi_j(x)$ becomes singular at $0$ and $x$ is close to $0$.
\end{enumerate}

In this paper, we propose a strategy for determining stable parameters for computing M\"{u}ntz-Legendre polynomials, and optimize this process to minimize computational cost using dynamic programming. Subsequently, a recurrence formula (\cref{thm:moment_recurrence}) for the moments of M\"{u}ntz-Legendre polynomials is presented, which enables efficient computation of these moments for arbitrary power weight functions. Finally, we introduce a modification to the original Newton equation by incorporating a damping factor for the step size, with the goal of improving stability and efficiency in the computation process.

This paper is organized as follows. In \cref{sec:preliminaries}, we provide a restatement of some preliminaries. In \cref{sec:polys}, we introduce the concept of orthogonal M\"{u}ntz polynomials and derive some of their recurrence formulae. The detailed computation of M\"{u}ntz-Legendre polynomials will be presented in \cref{sec:muntz_computing}. The numerical construction of the Gauss quadrature rule will be discussed in \cref{sec:quadrature}. Finally, we will address the error estimation of the Gauss quadrature for a specific class of M\"{u}ntz sequences in \cref{sec:error}, and present numerical examples in \cref{sec:examples}.

\section{Preliminaries}
\label{sec:preliminaries}
In this section, we summarize several classical results and numerical tools. They can be found, for example, in \cite{karlin1966,ma1996generalized,allgower2012numerical}.
\subsection{Existence and uniqueness of quadrature rule}
\begin{definition}
[\cite{karlin1966}]
\label{chebyshev_system_def}
A finite set of functions ${\varphi_0,\varphi_1,\cdots,\varphi_n}$ defined on the interval $[a,b]$ is called a Chebyshev system if and only if $\varphi_j\in C[a,b]$, $j=0,1,\cdots,n$, and the determinant of matrix $\bm{\Phi}$ is non-zero, where
\begin{equation}
\bm{\Phi} = \left[
\begin{matrix}
\varphi_0\left(x_0\right) & \varphi_0\left(x_1\right) & \cdots & \varphi_0\left(x_n\right) \\
\varphi_1\left(x_0\right) & \varphi_1\left(x_1\right) & \cdots & \varphi_1\left(x_n\right) \\
\vdots & \vdots & & \vdots \\
\varphi_n\left(x_0\right) & \varphi_n\left(x_1\right) & \cdots & \varphi_n\left(x_n\right)
\end{matrix}
\right],
\label{chebyshev_system}
\end{equation}
and $x_0,x_1,\cdots,x_n$ are any distinct points in the interval $[a,b]$.
\end{definition}

\begin{theorem}
[\cite{karlin1966}]
\label{generalized_gauss_quadrature1}
Let $\varphi_0,\varphi_1,\cdots,\varphi_{2N+1}$ be a Chebyshev system of functions defined on the interval $[a,b]$. Then, there exist unique $N+1$ Gaussian nodes $x_k \in (a,b)$ and weights $w_k >0$, $k=0,1,\cdots, N$, such that
\[I[\varphi_j] = Q_N[\varphi_j],\quad  j=0,1,\cdots,2N+1.\]
\end{theorem}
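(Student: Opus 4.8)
The plan is to reduce the nonlinear problem \cref{nolinear_equation} to the study of a single ``orthogonal'' function and then exploit the zero-counting property that defines a Chebyshev system. Introduce the weighted inner product $\langle f,g\rangle = \int_a^b f(x)g(x)\omega(x)\,\mathrm{d}x$ and apply Gram--Schmidt to $\varphi_0,\ldots,\varphi_{N+1}$ to produce an orthogonal function $P_{N+1}\in\operatorname{span}\{\varphi_0,\ldots,\varphi_{N+1}\}$ with $P_{N+1}\perp\operatorname{span}\{\varphi_0,\ldots,\varphi_N\}$; for the M\"{u}ntz system this is precisely the M\"{u}ntz--Legendre polynomial studied later in the paper. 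I claim the Gaussian nodes must be the zeros of $P_{N+1}$. First I would show $P_{N+1}$ has exactly $N+1$ simple zeros in $(a,b)$: if it changed sign at only $r\le N$ interior points, the Chebyshev property lets me build $q\in\operatorname{span}\{\varphi_0,\ldots,\varphi_r\}$ matching those sign changes, so that $P_{N+1}q$ keeps a fixed sign and $\langle P_{N+1},q\rangle\neq 0$, contradicting orthogonality; conversely a nonzero element of the $(N+2)$-dimensional Chebyshev system has at most $N+1$ zeros, so there are exactly $N+1$, all simple and interior.

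For existence, weights and exactness, fix $x_0<\cdots<x_N$ equal to the zeros of $P_{N+1}$. Since $\varphi_0,\ldots,\varphi_N$ is itself a Chebyshev system, there are unique fundamental functions $\ell_k\in\operatorname{span}\{\varphi_0,\ldots,\varphi_N\}$ with $\ell_k(x_j)=\delta_{jk}$; set $w_k = I[\ell_k]$. Exactness for $j\le N$ is then immediate. To reach $N<j\le 2N+1$ I would use a generalized division (remainder) identity $\varphi_j = r_j + P_{N+1}s_j$, where $r_j$ interpolates $\varphi_j$ at the nodes and $s_j\in\operatorname{span}\{\varphi_0,\ldots,\varphi_N\}$, so that $I[\varphi_j]=I[r_j]+\langle P_{N+1},s_j\rangle = I[r_j] = Q_N[\varphi_j]$, the last step because $\varphi_j$ and $r_j$ agree at the nodes while $P_{N+1}$ vanishes there. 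Positivity of the weights I would obtain from a non-negative element $h_k$ of the full span with double zeros at every $x_j$, $j\neq k$, and $h_k(x_k)>0$: then $0<I[h_k]=Q_N[h_k]=w_k h_k(x_k)$.

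For uniqueness, take any rule with interior nodes $y_k$ and positive weights $v_k$ exact on all $2N+2$ functions, and show each $y_k$ is a zero of $P_{N+1}$. Applying the rule to the products $P_{N+1}\ell$ through the same division identity and using orthogonality forces $\sum_{k} v_k P_{N+1}(y_k)\ell(y_k)=0$ for every $\ell$ in the $(N+1)$-dimensional space; choosing $\ell$ to be the fundamental functions at the $y$-nodes yields $v_m P_{N+1}(y_m)=0$, and since $v_m>0$ this gives $P_{N+1}(y_m)=0$, so the $y_k$ coincide with the $N+1$ zeros $x_k$. The weights then coincide because they solve the same nonsingular linear system governed by the invertible matrix $\bm{\Phi}$ of \cref{chebyshev_system} for the subsystem $\varphi_0,\ldots,\varphi_N$.

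The genuinely delicate points are the two structural facts that go beyond the bare hypothesis of \cref{chebyshev_system_def}: the division $\varphi_j = r_j + P_{N+1}s_j$ and the existence of the non-negative ``squared'' functions $h_k$ with prescribed double zeros. Both require the system to be an extended (complete) Chebyshev system, i.e. enough smoothness that zeros can be counted with multiplicity and the associated confluent interpolation determinants stay non-singular; for the smooth M\"{u}ntz systems of interest this holds, so I would either invoke the ECT refinement of \cref{chebyshev_system_def} or verify these two lemmas directly for the M\"{u}ntz--Legendre functions. Guaranteeing that the nodes are \emph{strictly} interior and the weights \emph{strictly} positive, rather than merely nonnegative, is where the argument is least automatic and is the step I expect to demand the most care.
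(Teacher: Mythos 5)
The paper does not actually prove this statement: it is quoted from Karlin--Studden, and the authors remark that it follows from the geometry of moment spaces (interior points of the moment cone admit a unique principal representation). Your proposal instead transplants the classical polynomial proof, and it founders on a step you yourself flag but underestimate: the ``division identity'' $\varphi_j = r_j + P_{N+1}s_j$ with $s_j\in\operatorname{span}\{\varphi_0,\ldots,\varphi_N\}$. This identity relies on the span being closed under multiplication (in the polynomial case $\deg(P_{N+1}s_j)\le 2N+1$, so the product stays in $\mathbb{P}_{2N+1}$). A general Chebyshev system has no such ring structure, and neither does an extended complete Chebyshev system, so invoking ECT-ness does not repair it. Concretely, for the M\"{u}ntz systems this paper targets, $P_{N+1}s_j$ lives in $\operatorname{span}\{x^{\lambda_i+\lambda_k}\}$, and the sums $\lambda_i+\lambda_k$ are not in $\Lambda_{2N+1}$ in general --- already for \cref{exm1}, where $\lambda_{2k}=k+2/3$ and $\lambda_{2k+1}=k-2/3$, the products produce exponents such as $4/3$ and $-4/3$ that are absent from the sequence. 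The same closure problem defeats your positivity argument: $\ell_k^2$ (or your $h_k$) need not lie in $\operatorname{span}\{\varphi_0,\ldots,\varphi_{2N+1}\}$, so $Q_N[h_k]=I[h_k]$ is not available. A smaller but real secondary gap: your zero-counting and sign-change interpolation use that the initial segments $\{\varphi_0,\ldots,\varphi_r\}$ and $\{\varphi_0,\ldots,\varphi_{N+1}\}$ are themselves Chebyshev systems, which \cref{chebyshev_system_def} does not grant (it constrains only the full family of $2N+2$ functions); you would need a Markov/complete system hypothesis for that.

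The fix is not to patch the division step but to change strategy. The standard route (Karlin--Studden, and the variant in \cite{ma1996generalized}) works with the moment map $x\mapsto(\varphi_0(x),\ldots,\varphi_{2N+1}(x))$: the target vector $(I[\varphi_0],\ldots,I[\varphi_{2N+1}])$ lies in the interior of the convex cone generated by the curve, and the Chebyshev determinant condition forces every interior point to have exactly one representation as a positive combination of $N+1$ interior points of the curve (a ``principal representation''), which is precisely the quadrature rule. That argument never multiplies two elements of the span, which is why it survives in the M\"{u}ntz setting where your proposal does not.
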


The result of \cref{generalized_gauss_quadrature1} is a corollary of a more general geometric property of moment spaces, as derived from the Chebyshev system \cite{karlin1966}.
Theorem $\ref{generalized_gauss_quadrature1}$ can be extended to a class of Gauss quadratures involving functions with endpoint singularities.
\begin{theorem}
[\cite{ma1996generalized}]
\label{generalized_gauss_quadrature2}
Let $\varphi_j:(a,b)\to \mathbb{R}$ be continuous and integrable on $(a,b)$ for $j=0,1,\cdots,2N+1$, and $r(x)>0$ be continuous and integrable on $(a,b)$. Define $\psi_j(x) = {\varphi_j(x)}/{r(x)}$. If
\begin{equation}
\lim _{x \rightarrow a} \psi_j(x)<\infty,
\label{temp37}
\end{equation}
and ${\psi_0,\psi_1,\cdots,\psi_{2N+1} }$ form a Chebyshev system on the interval $[a,b]$,  and $\omega(x) r(x)$ is integrable, then there exists a unique Gaussian rule with $N+1$ nodes that is exact for ${\varphi_0,\varphi_1,\cdots,\varphi_{2N+1}}$, and all the Gaussian weights $w_0,w_1,\cdots,w_N$ are positive.
\end{theorem}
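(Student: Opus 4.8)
The plan is to reduce the claim to \cref{generalized_gauss_quadrature1} by absorbing the singular factor $r$ into the weight. I would define the modified weight $\tilde{\omega}(x) = r(x)\omega(x)$ together with the associated functional $\tilde{I}[\psi] = \int_a^b \psi(x)\tilde{\omega}(x)\,\mathrm{d}x$. Since $\varphi_j = r\,\psi_j$ by the definition of $\psi_j$, every integral of interest factors cleanly as
\[
I[\varphi_j] = \int_a^b \psi_j(x)\, r(x)\omega(x)\,\mathrm{d}x = \tilde{I}[\psi_j], \qquad j=0,1,\cdots,2N+1.
\]
First I would verify that $\tilde{I}$ is a legitimate weighted-integral functional of the kind treated in \cref{generalized_gauss_quadrature1}: the weight $\tilde{\omega}=r\omega$ is continuous and positive almost everywhere on $(a,b)$, because $r>0$ is continuous and $\omega$ is continuous and positive a.e., and by hypothesis $r\omega$ is integrable. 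The finiteness of each $\tilde{I}[\psi_j]$ then follows from the condition $\lim_{x\to a}\psi_j(x)<\infty$: combined with continuity of $\psi_j$ on $(a,b)$ and the Chebyshev-system hypothesis (which forces $\psi_j\in C[a,b]$), this makes each $\psi_j$ bounded on the closed interval, so that $\psi_j\tilde{\omega}$ is a bounded function times an integrable one, hence integrable.

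Next I would invoke \cref{generalized_gauss_quadrature1} directly for the Chebyshev system $\{\psi_0,\cdots,\psi_{2N+1}\}$ equipped with the weight $\tilde{\omega}$. This produces unique interior nodes $x_k\in(a,b)$ and strictly positive weights $\tilde{w}_k>0$, $k=0,\cdots,N$, satisfying
\[
\sum_{k=0}^N \psi_j(x_k)\,\tilde{w}_k = \tilde{I}[\psi_j], \qquad j=0,1,\cdots,2N+1.
\]

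Finally I would transform back to the original functions. Setting $w_k = \tilde{w}_k / r(x_k)$ and using $\psi_j(x_k) = \varphi_j(x_k)/r(x_k)$ converts the identity above into $\sum_k \varphi_j(x_k)\,w_k = I[\varphi_j]$, which is precisely the desired exactness $Q_N[\varphi_j]=I[\varphi_j]$. Positivity of the new weights is immediate, since each node $x_k$ is interior, where $r(x_k)>0$, whence $w_k=\tilde{w}_k/r(x_k)>0$. Uniqueness transfers as well: the correspondence $x_k\mapsto x_k$, $\tilde{w}_k\mapsto \tilde{w}_k/r(x_k)$ is a bijection between rules for $\{\psi_j\}$ and rules for $\{\varphi_j\}$, so any rule exact for $\{\varphi_j\}$ pulls back to one exact for $\{\psi_j\}$, and the latter is unique by \cref{generalized_gauss_quadrature1}.

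The calculations here are routine; the one point genuinely requiring care—and the step I would treat as the main obstacle—is checking that the hypotheses really do place us in the setting of \cref{generalized_gauss_quadrature1}. Concretely, one must confirm that the \emph{finite-limit} condition at $a$ (rather than mere continuity on the open interval) is exactly what supplies $\psi_j\in C[a,b]$, and hence the boundedness needed for integrability of $\psi_j\tilde{\omega}$, and that the \emph{interior} location of the nodes is what keeps the division by $r(x_k)$ harmless throughout the back-transformation. Everything else is bookkeeping around the substitution $\psi_j=\varphi_j/r$.
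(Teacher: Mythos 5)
The paper does not actually prove \cref{generalized_gauss_quadrature2}; it is quoted verbatim from the cited reference and used as a black box, so there is no in-paper argument to compare against. That said, your reduction is correct and is essentially the standard proof from the literature: fold the singular factor into the weight via $\tilde{\omega}=r\omega$, observe $I[\varphi_j]=\tilde{I}[\psi_j]$, apply \cref{generalized_gauss_quadrature1} to the Chebyshev system $\{\psi_j\}$, and push the resulting rule back through $w_k=\tilde{w}_k/r(x_k)$. The exactness computation, the positivity of the transformed weights (using that the nodes are interior, where $r>0$), and the bijection argument for uniqueness all check out, and you correctly identify that $\psi_j\in C[a,b]$ (forced by \cref{chebyshev_system_def}) is what makes $\psi_j\tilde{\omega}$ integrable.

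One small point worth flagging: \cref{generalized_gauss_quadrature1}, as set up in the introduction, asks for a weight that is continuous on the closed interval $[a,b]$, whereas your $\tilde{\omega}=r\omega$ is only guaranteed continuous on the open interval $(a,b)$ and may blow up at an endpoint (that is the whole point of introducing $r$). So the application of \cref{generalized_gauss_quadrature1} is not quite literal; you need the version of the Karlin--Studden result for a general finite positive measure $\tilde{\omega}\,\mathrm{d}x$, which the paper itself alludes to when it remarks that \cref{generalized_gauss_quadrature1} is a corollary of a more general geometric property of moment spaces. With that reading, your argument is complete.
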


\subsection{Continuation method}
The continuation method \cite{allgower2012numerical}, also known as the homotopy continuation method, is a numerical technique used to find roots of nonlinear equations. It is a path-following algorithm that traces a solution curve in parameter space from a known solution to the desired one.

Stated briefly, suppose that we are trying to solve a system of $m$ nonlinear equations with $m$ unknows,
\begin{equation}
\label{nonlinear_equation}
\mathbf{F}(\mathbf{x}) = \mathbf{0},
\end{equation}
where $\mathbf{F} : \mathbb{R}^m \rightarrow \mathbb{R}^m$ is continuously differentiable. 
Certainly, if a good approximation $\mathbf{x}^{(0)}$ of the zero point $\mathbf{x}^*$ of $\mathbf{F}$ is available, it is advisable to calculate $\mathbf{x}^*$ by a Newton's method defined by an iteration formula as
\begin{equation}
\label{iteration}
\mathbf{x}^{(k+1)} \leftarrow \mathbf{x}^{(k)} - \mathbf{J}_k^{-1} \mathbf{F}(\mathbf{x}^{(k)}),
\end{equation}
where $\mathbf{J}_k$ is the Jacobian of $\mathbf{F}$ at $\mathbf{x}^{(k)}$. In many cases, the iteration \cref{iteration} fails if a starting point is not available directly.
As a possible remedy, we define a homotopy mapping $\mathbf{H}:\mathbb{R}^m \times \mathbb{R} \rightarrow \mathbb{R}^m$ such that 
\[
\mathbf{H}(\mathbf{x},0) = \mathbf{G}(\mathbf{x}), \quad 
\mathbf{H}(\mathbf{x},1) = \mathbf{F}(\mathbf{x}),
\] 
where $\mathbf{G}:\mathbb{R}^m\rightarrow \mathbb{R}^m$ is a trivial mapping, so that the solution $\mathbf{x}_0$ of $\mathbf{G}(\mathbf{x})=\mathbf{0}$ is unique and known. 
We attempt to trace an implicitly defined curve $ \mathcal{C} \in \mathbf{H}^{-1}(\mathbf{0})$ from a starting point $(\mathbf{x}_0,0)$ to the solution point $(\mathbf{x}^*,1)$.

Suppose that for any $\alpha \in [0,1]$, the system of equations $\mathbf{H}(\mathbf{x},\alpha) = \mathbf{0}$ has a unique solution denoted as $\mathbf{x}(\alpha)$. Furthermore, assume that $\mathbf{x}(\alpha)$ is continuously differentiable with respect to $\alpha$, and that the Jacobian matrix $\partial_{\mathbf{x}} \mathbf{H}$ evaluated at $\mathbf{x}(\alpha)$ is nonsingular.

Then, there exists a partition $0 = \alpha_0 < \alpha_1 < \cdots < \alpha_p = 1$ such that the maximum interval length $\max_{1 \leq i \leq p} {|\alpha_i - \alpha_{i-1}| }$ is sufficiently small.  We can obtain $\mathbf{x}(\alpha_i)$ by solving $\mathbf{H}(\mathbf{x},\alpha_i)=\mathbf{0}$ with initial guess $\mathbf{x}(\alpha_{i-1})$. This process is repeated starting from $\mathbf{x}(\alpha_0)$, and incrementing $i$ from $1$ to $p$. The solution $\mathbf{x}^*$ is obtained when $\alpha_p$ is reached.

A detailed discussion of the continuation method can be found in \cite{allgower2012numerical}, where the convergence of the continuation method under more general conditions is considered.

\section{Orthogonal M\"{u}ntz polynomials}
\label{sec:polys}
Let a complex M\"{u}ntz sequence $\Lambda = \{\lambda_0,\lambda_1,\lambda_2,\cdots \}$, and its first $N+1$ elements be denoted as $\Lambda_N = \{\lambda_0, \lambda_1, \dots, \lambda_N\}$. The set of functions generated by the M\"{u}ntz sequence, $\{x^{\lambda_0},x^{\lambda_1},\cdots,x^{\lambda_N} \}$, is called the M\"{u}ntz system.
The linear space over the field of real numbers generated by the M\"{u}ntz sequence, denoted by $
M(\Lambda_N) = \operatorname{span}\left \{x^{\lambda_0}, x^{\lambda_1}, \cdots, x^{\lambda_N}\right \}
$, is called the M\"{u}ntz space. That is, the M\"{u}ntz space is the collection of all M\"{u}ntz polynomials 
$$
p(x)=\sum_{i=0}^N a_i x^{\lambda_i},\quad a_i \in \mathbb{R}
.$$

For the $L^2$ theory of M\"{u}ntz system \cite{almira2007muntz}, we consider $\Lambda_N$ satisfying 
\begin{equation}
\operatorname{Re}({\lambda_n}) >-\frac{1}{2},\quad n=0,1,2\cdots,N,
\label{lambda_N}
\end{equation}
where $\operatorname{Re}({\lambda_n})$ is the real part of $\lambda_n$. \cref{lambda_N} ensures that every M\"{u}ntz polynomial in $M(\Lambda_N)$ is in $L^2(0,1)$. The corresponding M\"{u}ntz-Legendre polynomials \cite{taslakyan1984}, orthogonal with respect to Lebesgue measure, are defined as 
\begin{equation}
L_n\left(x;\Lambda_n \right) =\frac{1}{2 \pi i} \int_{\Gamma_n}W_n(t) x^t \mathrm{d} t,
\quad
W_n(t) = \prod_{k=0}^{n-1} \frac{t+\bar{\lambda}_k+1}{t-\lambda_k} \frac{1}{t-\lambda_n},
\label{muntz_legendre}
\end{equation}
where $n=0,1,2, \cdots,N$, $\Gamma_n$ is a simple contour on the complex plane that encircles $\lambda_0,\cdots,\lambda_n$,  and $\bar{\lambda}_k$ denotes the conjugate of $\lambda_k$. It is obvious that $L_0(x;\Lambda_0)=x^{\lambda_0}$. Henceforth, without causing confusion, we will denote $L_n (x;\Lambda_n)$ as $L_n (x)$ or simply $L_n$. The definition of $\cref{muntz_legendre}$ does not require the distinctness of the exponents $\lambda_k$. In fact, repeated $\lambda_k$ results in $\log x$ coming into the picture.

\begin{lemma}
\label{repeated_indices}
Let $\Lambda_n=\{\lambda_0,\cdots,\lambda_0,\cdots,\lambda_s,\cdots,\lambda_s\}$ consist of $r_k$ copies of $\lambda_k$, $r_k\geq 1$, $k=0,1,\cdots,s$, $s\leq n$, and $\sum_{k=0}^s r_k = n+1$. Then
\[\text{\rm span} \{L_0,\cdots,L_n\} = \text{\rm span} 
\left \{x^{\lambda_0},\cdots,x^{\lambda_0}\log^{r_0-1}x,\cdots, x^{\lambda_s},\cdots,x^{\lambda_s}\log^{r_s-1}x\right \}.
\]
\end{lemma}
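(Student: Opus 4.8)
The plan is to evaluate each $L_m$ by the residue theorem, read off which monomials $x^{\lambda_k}\log^j x$ can possibly occur, and then package the outcome as a triangular change of basis. First I would fix notation for the ordered sequence: write the $n+1$ entries of $\Lambda_n$ as $\mu_0,\mu_1,\dots,\mu_n$, where the first $r_0$ equal $\lambda_0$, the next $r_1$ equal $\lambda_1$, and so on, so that $L_m$ is exactly the function given by \cref{muntz_legendre} with the $\mu_j$ in place of the $\lambda_j$. Since $W_m(t)x^t$ is meromorphic with all of its poles (located at the distinct values occurring among $\mu_0,\dots,\mu_m$) enclosed by $\Gamma_m$, the integral equals the sum of residues, so $L_m(x)=\sum_k \operatorname{Res}_{t=\lambda_k}\bigl[W_m(t)x^t\bigr]$, where the sum runs over the distinct $\lambda_k$ with $k\le k(m)$ and $k(m)$ denotes the distinct-value index of $\mu_m$.

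Next I would compute a single residue. If $\lambda_k$ occurs with multiplicity $p$ among $\mu_0,\dots,\mu_m$, then $W_m$ has a pole of order $p$ there, and differentiating $x^t=e^{t\log x}$ in the order-$p$ residue formula produces precisely the terms $x^{\lambda_k}\log^j x$ for $0\le j\le p-1$. The coefficient of the top term $x^{\lambda_k}\log^{p-1}x$ equals $\frac{1}{(p-1)!}\lim_{t\to\lambda_k}(t-\lambda_k)^p W_m(t)$, and the key point is that this limit is nonzero: the denominator of $W_m$ supplies the pole of order exactly $p$, while the numerator $\prod(t+\bar\mu_j+1)$ cannot vanish at $t=\lambda_k$, because \cref{lambda_N} forces $\operatorname{Re}(\lambda_k+\bar\mu_j)>-1$ and hence $\lambda_k+\bar\mu_j\neq-1$. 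Since $p\le r_k$, this already yields $L_m\in\operatorname{span}\{x^{\lambda_k}\log^j x:0\le k\le s,\ 0\le j\le r_k-1\}$, and therefore the inclusion $\subseteq$ of the two spans.

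To upgrade the inclusion to equality I would exhibit a triangular structure. Order the $\sum_{k=0}^s r_k=n+1$ monomials $x^{\lambda_k}\log^j x$ lexicographically, first by $k$ and then by $j$; they are linearly independent because the exponents $\lambda_k$ are distinct and the powers of $\log x$ are distinct. By the residue computation, $L_m$ involves only monomials with distinct-value index at most $k(m)$, and for the index $k(m)$ only powers $\log^j x$ with $j\le p(m)-1$, where $p(m)$ is the number of copies of $\mu_m$ among $\mu_0,\dots,\mu_m$. Because the sequence is grouped, as $m$ runs through $0,\dots,n$ the pair $(k(m),p(m)-1)$ runs exactly once through every admissible monomial index, and the leading term of $L_m$ in this order is $x^{\lambda_{k(m)}}\log^{p(m)-1}x$ with the nonzero coefficient found above. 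Thus the matrix expressing $L_0,\dots,L_n$ in the monomial basis is triangular with nonzero diagonal, hence invertible, which simultaneously proves that the $L_m$ are linearly independent and that the two spans coincide.

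The main obstacle I anticipate is the bookkeeping in the second and third steps rather than any deep idea: correctly identifying the order of each pole as the running multiplicity, using \cref{lambda_N} to certify that the leading Laurent coefficient does not vanish so that the expected leading monomial genuinely appears, and choosing the monomial ordering so that the residue expansion is strictly triangular. The residue expansion itself is routine once $x^t=e^{t\log x}$ is differentiated, so the real content lies in the nonvanishing of the top coefficient and in the combinatorial matching of the index $m$ with the monomial index $(k(m),p(m)-1)$.
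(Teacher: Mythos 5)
Your argument is correct, and its first half is essentially the paper's: the paper reaches the inclusion $\subseteq$ by first applying the partial-fraction decomposition of \cref{lem:factor} to $W_n$ and then integrating each simple fraction $A_{v,k}/(t-\lambda_v)^k$ by the residue theorem, which is the same computation you perform directly via the order-$p$ residue formula at each pole (the $A_{v,k}$ are exactly the Laurent coefficients your expansion produces). Where you genuinely diverge is in upgrading the inclusion to an equality. The paper closes with a one-line ``dimension argument,'' which implicitly needs the linear independence of $L_0,\dots,L_n$; that independence is available from the orthogonality relation of \cref{muntz_orhogonal}, so the paper's route is shorter but leans on an external fact. You instead order the monomials $x^{\lambda_k}\log^j x$ lexicographically, use \cref{lambda_N} to certify that the numerator of $W_m$ does not vanish at any pole (so each pole has exact order equal to the running multiplicity and the top Laurent coefficient is nonzero), and conclude that the change-of-basis matrix is triangular with nonzero diagonal. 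This buys you a self-contained proof that simultaneously establishes the linear independence of the $L_m$ and identifies the leading monomial $x^{\lambda_{k(m)}}\log^{p(m)-1}x$ of each $L_m$ — strictly more information than the paper extracts — at the cost of some combinatorial bookkeeping that the paper's appeal to orthogonality avoids.
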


The proof of \cref{repeated_indices} requires \cref{lem:factor}.
\begin{lemma}[\cite{joseph2013}]
\label{lem:factor}
Let $P(x)$ and $Q(x)$ be coprime polynomials with complex coefficients, where the degree of $P(x)$ is lower than the degree of $Q(x)$. Since irreducible polynomials over the field of complex numbers are linear, we suppose the irreducible factorization of $Q(x)$ as follows
\[
Q(x) = \prod_{\nu=0}^s (x-\lambda_\nu)^{r_\nu} ,
\]
where $\lambda_\nu \in \mathbb{C}$ and $r_\nu \in \mathbb{Z}^+$. Then, $P(x)/Q(x)$ can be represented as the sum of simple fractions in the following form
\[
\frac{P(x)}{Q(x)} = \sum_{\nu=0}^s \sum_{k=1}^{r_\nu} \frac{A_{\nu,k}}{(x-\lambda_\nu)^{k}},
\]
where $A_{\nu,k}$ are complex constants for $0 \leq \nu\leq s$ and $1\leq k \leq r_\nu$.
\end{lemma}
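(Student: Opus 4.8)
The statement to prove is the classical partial‑fraction decomposition of a proper rational function over $\mathbb{C}$. The plan is to establish the existence of the representation by induction on the total degree $d := \deg Q = \sum_{\nu=0}^{s} r_\nu$, peeling off one simple fraction of \emph{maximal} order at each step. I note at the outset that the only hypothesis actually needed is $\deg P < \deg Q$; the coprimality of $P$ and $Q$ is not used for existence (it merely certifies that the fraction is in lowest terms). The base case $d=1$ is immediate: then $Q = x-\lambda_0$ and $\deg P < 1$ forces $P$ to be a constant, so $P/Q = A_{0,1}/(x-\lambda_0)$.

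For the inductive step I fix one root, say $\lambda_0$, and factor $Q(x) = (x-\lambda_0)^{r_0} R(x)$ with $R(x) = \prod_{\nu=1}^{s}(x-\lambda_\nu)^{r_\nu}$. Because the $\lambda_\nu$ are distinct, $R(\lambda_0)\neq 0$, so the scalar $A_{0,r_0} := P(\lambda_0)/R(\lambda_0)$ is well defined. By construction $P(x) - A_{0,r_0}R(x)$ vanishes at $\lambda_0$, hence by the factor theorem equals $(x-\lambda_0)\widetilde{P}(x)$ for some polynomial $\widetilde{P}$, and therefore
\[
\frac{P(x)}{Q(x)} = \frac{A_{0,r_0}}{(x-\lambda_0)^{r_0}} + \frac{\widetilde{P}(x)}{(x-\lambda_0)^{r_0-1}R(x)}.
\]
The new denominator $(x-\lambda_0)^{r_0-1}R(x) = Q(x)/(x-\lambda_0)$ has degree $d-1$ and is again a product of linear factors to positive integer powers, with $\lambda_0$ simply dropping out of the list when $r_0 = 1$. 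Applying the induction hypothesis to the second summand and collecting the extracted term $A_{0,r_0}/(x-\lambda_0)^{r_0}$ with the terms it produces yields precisely the claimed double sum, with the index $k$ at $\lambda_0$ now running over the full range $1 \le k \le r_0$.

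The step I expect to be the main obstacle is the degree bookkeeping that keeps the induction hypothesis applicable: I must check that $\widetilde{P}$ is again proper, i.e. $\deg\widetilde{P} < d-1$. This follows from $\deg\bigl(P - A_{0,r_0}R\bigr) \le \max\bigl(\deg P,\deg R\bigr) \le d-1$ (using $\deg P < d$ and $\deg R = d-r_0 \le d-1$) together with the drop of one degree upon factoring out $(x-\lambda_0)$; note also that the vanishing at $\lambda_0$ holds even when $A_{0,r_0}=0$, since then $P(\lambda_0)=0$ already. A related subtlety worth isolating is \emph{why} I extract the highest‑order term rather than the lowest: at a multiple root ($r_0 > 1$) the complementary factor $Q/(x-\lambda_0)$ still vanishes at $\lambda_0$, so a lowest‑order extraction would produce an ill‑defined coefficient, whereas $R = Q/(x-\lambda_0)^{r_0}$ does not vanish there and makes $A_{0,r_0}$ well defined through the factor theorem. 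Finally, if uniqueness of the $A_{\nu,k}$ is also wanted, I would append a short linear‑algebra argument: the $d$ functions $(x-\lambda_\nu)^{-k}$ all lie in the $d$‑dimensional space $\{P/Q : \deg P < d\}$ and are linearly independent, as one sees by clearing $Q$ and evaluating the resulting polynomial identity successively at each $\lambda_\mu$ while peeling off powers; hence they form a basis and the coefficients are determined.
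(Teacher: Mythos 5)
The paper offers no proof of this lemma at all: it is quoted from \cite{joseph2013} and used as a black box in the proof of \cref{repeated_indices}, so there is no internal argument to compare against, and your proposal must stand on its own --- which it does; it is correct. Your induction on $d=\deg Q$, extracting the top-order term $A_{0,r_0}/(x-\lambda_0)^{r_0}$ with $A_{0,r_0}=P(\lambda_0)/R(\lambda_0)$ and invoking the factor theorem, is the standard existence proof, and you handle the genuine pressure points properly: you weaken the inductive statement to require only $\deg P<\deg Q$ (coprimality indeed cannot be propagated through the induction, since $\widetilde P$ may acquire common roots with $Q(x)/(x-\lambda_0)$, so dropping it is not optional but necessary); you verify the degree bookkeeping $\deg\widetilde P\le d-2$; and you correctly explain why the \emph{highest}-order term must be peeled off at a multiple root, where a lowest-order extraction would divide by $Q(\lambda_0)/(x-\lambda_0)\big|_{x=\lambda_0}=0$. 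Only trivial loose ends remain: the degenerate case $\widetilde P\equiv 0$ (or $P\equiv 0$) should be mentioned as terminating the recursion with the remaining coefficients zero, and when $r_0=1$ the root $\lambda_0$ leaves the factor list, which you do note. Your closing uniqueness sketch, though not demanded by the statement, is in fact pertinent to how the paper uses the lemma: the explicit formula for $A_{v,k}$ in the proof of \cref{repeated_indices}, obtained by multiplying by $(t-\lambda_v)^{r_v}$ and taking the $(r_v-k)$-th derivative, tacitly presupposes that the coefficients are well defined, and your dimension-count plus successive evaluation at the $\lambda_\mu$ is precisely what justifies that.
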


\begin{proof}[Proof of \cref{repeated_indices}]
By \cref{lem:factor}, $W_n(t)$ can be factored into
\begin{equation}
\label{temp1005}
W_n(t)
= 
\frac{1}{t+\bar{\lambda}_s+1}
\prod_{\nu=0}^s
\frac{(t+\bar{\lambda}_v+1)^{r_v}}{  (t-\lambda_v)^{r_v}}
=
\sum_{v=0}^{s} \sum_{k=1}^{r_v} \frac{A_{v,k}}{(t-\lambda_v)^k}.
\end{equation}
By multiplying $(t-\lambda_v)^{r_v}$ on both sides of \cref{temp1005} and taking the $(r_v-k)$-th derivative, we can compute $A_{v,k}$ as follows
\[A_{v,k} = 
\frac{1}{(r_v-k)!}
\lim_{t\to \lambda_v}
\partial_t^{r_v-k} 
\left(\frac{\prod_{j=0}^s (t+\bar{\lambda}_j+1)^{r_j}}{(t+\bar{\lambda}_s+1)\prod_{j=0,j\neq v}^s (t-\lambda_j)^{r_j}} \right).
\]
Then
\begin{equation}
\label{temp999}
L_n(x) = \frac{1}{2\pi i} \int_{\Gamma_n}
\sum_{v=0}^{s} \sum_{k=1}^{r_v} \frac{A_{v,k}}{(t-\lambda_v)^k} x^t \mathrm{d}t
 = \sum_{v=0}^{s} \sum_{k=1}^{r_v} A_{v,k}
\frac{1}{2\pi i} \int_{\Gamma_n}
\frac{x^t}{(t-\lambda_v)^k}  \mathrm{d}t
\end{equation}
By the residue theorem \cite{stein2010complex}, \cref{temp999} could be represented as
\[
L_n(x) = 
\sum_{v=0}^{s} \sum_{k=1}^{r_v}
\frac{A_{v,k}}{\Gamma(k)} x^{\lambda_v} \log^{k-1} x .
\]
Therefore, it follows that 
\[\text{\rm span} \{L_0,\cdots,L_n\} \subseteq \text{\rm span} 
\left \{x^{\lambda_0},\cdots,x^{\lambda_0}\log^{r_0-1}x,\cdots, x^{\lambda_s},\cdots,x^{\lambda_s}\log^{r_s-1}x\right \}.
\] 
By a dimension argument, the conclusion is proved.
\end{proof}

To describe the case of repeated elements in the M\"{u}ntz sequence $\Lambda_N$, we define
\begin{equation}
\hat{M}(\Lambda_n) = \mathrm{span} \{ L_0,L_1,\cdots,L_n \},\quad n=0,1,\cdots,N.
\label{muntz_space_hat}
\end{equation}
Thus, for any M\"{u}ntz sequence $\Lambda_N$ that satisfies \eqref{lambda_N}, we always have
\begin{equation}
M(\Lambda_n) \subseteq \hat{M}(\Lambda_n), \quad n=0,1,\cdots,N.
\end{equation}

The name M\"{u}ntz-Legendre polynomial is justified by the following theorem, where the orthogonality of $L_n$ allows repeated indices.
\begin{theorem}[\cite{borwein1994muntz}]
\label{muntz_orhogonal}
Let M\"{u}ntz sequence $\Lambda_N $ satisfy $\cref{lambda_N}$ and $L_n$ be defined by \cref{muntz_legendre}. Then
\begin{equation}
\int_0^1L_n(x) \overline{L_m(x)} \mathrm{d} x = \frac{\delta_{n,m}}{1+\lambda_n + \bar{\lambda}_{n}},\quad n,m=0,1,\cdots,N,
\label{muntz_orthogonal1}
\end{equation}
where $\delta_{n,m}$ is the Kronecker-Delta symbol.
\end{theorem}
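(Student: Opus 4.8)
The plan is to reduce the double integral to a single contour integral of an explicit rational function and then read off the value by residues. The first step is to compute, for a single power $x^t$ with $\operatorname{Re}(t)$ large, the moment $\mu_m(t):=\int_0^1 x^t\,\overline{L_m(x)}\,\mathrm{d}x$. Conjugating \cref{muntz_legendre} and using $\overline{x^s}=x^{\bar s}$ for $x\in(0,1)$, one sees that $\overline{L_m}$ is again a contour integral, $\overline{L_m(x)}=\frac{1}{2\pi i}\int_{\Gamma_m'}\widetilde W_m(s)\,x^s\,\mathrm{d}s$, where $\widetilde W_m(s)=\overline{W_m(\bar s)}$ is obtained from $W_m$ by conjugating every $\lambda_k$, and $\Gamma_m'$ encircles $\bar\lambda_0,\dots,\bar\lambda_m$. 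Inserting this and integrating $x^{t+s}$ over $(0,1)$ gives $\mu_m(t)=\frac{1}{2\pi i}\int_{\Gamma_m'}\frac{\widetilde W_m(s)}{t+s+1}\,\mathrm{d}s$. Since $\widetilde W_m(s)=O(s^{-1})$ at infinity, the integrand is $O(s^{-2})$, so the sum of all of its residues vanishes; the only pole lying outside $\Gamma_m'$ is the simple one at $s=-t-1$, whence $\mu_m(t)=-\widetilde W_m(-t-1)$. Substituting $s=-t-1$ and simplifying the signs yields the closed form
\[
\mu_m(t)=\frac{1}{t+\bar\lambda_m+1}\prod_{k=0}^{m-1}\frac{t-\lambda_k}{t+\bar\lambda_k+1}=\frac{1}{(t-\lambda_m)(t+\bar\lambda_m+1)\,W_m(t)},
\]
a rational function whose only poles sit at $t=-\bar\lambda_k-1$, $k=0,\dots,m$.

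Next I would substitute $L_n(x)=\frac{1}{2\pi i}\int_{\Gamma_n}W_n(t)\,x^t\,\mathrm{d}t$ into $\int_0^1 L_n\overline{L_m}\,\mathrm{d}x$ and exchange the order of integration (justified by absolute integrability, since \cref{lambda_N} forces $L_m\in L^2(0,1)$ and keeps $\operatorname{Re}(t)>-\tfrac12$ on $\Gamma_n$), obtaining
\[
\int_0^1 L_n(x)\overline{L_m(x)}\,\mathrm{d}x=\frac{1}{2\pi i}\int_{\Gamma_n}\frac{W_n(t)}{(t-\lambda_m)(t+\bar\lambda_m+1)\,W_m(t)}\,\mathrm{d}t.
\]
The ratio $W_n(t)/W_m(t)$ telescopes, and after the obvious cancellations the integrand becomes an explicit rational function, which I would analyze in the three cases $n<m$, $n=m$, and $n>m$.

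For $n=m$ the integrand collapses to $\frac{1}{(t-\lambda_n)(t+\bar\lambda_n+1)}$; the pole $t=\lambda_n$ lies inside $\Gamma_n$ while $t=-\bar\lambda_n-1$ has real part $<-\tfrac12$ and hence lies outside, so the residue theorem returns exactly $\frac{1}{\lambda_n+\bar\lambda_n+1}$, the claimed diagonal value. For $n<m$ every surviving factor cancels so that the only poles are at $t=-\bar\lambda_k-1$, all outside $\Gamma_n$, and Cauchy's theorem gives $0$. For $n>m$ the integrand reduces to $\prod_{k=m+1}^{n-1}(t+\bar\lambda_k+1)\big/\prod_{k=m}^{n}(t-\lambda_k)$, whose numerator has degree two less than the denominator; all its poles lie inside $\Gamma_n$, and because the function is $O(t^{-2})$ at infinity the sum of its residues vanishes, so the integral is again $0$. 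These manipulations are purely rational-function identities followed by residue calculus, so repeated exponents (which merely raise the pole orders) are handled automatically and need no separate treatment.

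The main obstacle, and the step deserving the most care, is the identification $\mu_m(t)=-\widetilde W_m(-t-1)$: it requires correctly conjugating both the contour and the weight, justifying the interchange of $\int_0^1$ with the contour integral for $\operatorname{Re}(t)$ large, and invoking the vanishing of the total residue sum (equivalently, the absence of a residue at infinity for an $O(s^{-2})$ integrand) as in \cite{stein2010complex}. Everything afterward is bookkeeping, provided one consistently uses $\operatorname{Re}(\lambda_k)>-\tfrac12$ from \cref{lambda_N} to place the poles $t=-\bar\lambda_k-1$ strictly to the left of, and hence outside, the contour $\Gamma_n$.
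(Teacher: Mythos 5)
The paper does not actually prove this statement---it is quoted from \cite{borwein1994muntz} without proof---so there is no in-paper argument to compare against; what can be said is that your proof is correct and that it runs on exactly the machinery the paper itself uses to prove \cref{thm:moment_recurrence}. Your key identity $\mu_m(t)=-\widetilde W_m(-t-1)$ is precisely the conjugated form of \cref{temp65} (with $\lambda$ replaced by $t$ and $W_n$ by the weight built from $\bar\lambda_0,\dots,\bar\lambda_m$), obtained by the same device of pushing the contour to a large circle on which the integrand is $O(R^{-1})$, equivalently by the vanishing of the total residue sum for an $O(s^{-2})$ rational function. The closed form $\mu_m(t)=\frac{1}{t+\bar\lambda_m+1}\prod_{k=0}^{m-1}\frac{t-\lambda_k}{t+\bar\lambda_k+1}$ checks out, the telescoping of $W_n/W_m$ is right in all three cases, and the degree count (numerator two below denominator when $n>m$, all poles inside $\Gamma_n$) correctly forces the off-diagonal integrals to vanish; the diagonal residue at $t=\lambda_n$ gives $1/(\lambda_n+\bar\lambda_n+1)$ as claimed. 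Two small points deserve one explicit sentence each: (i) you derive the rational form of $\mu_m(t)$ for $\operatorname{Re}(t)$ large but then evaluate it on $\Gamma_n$, so you should say that both sides are analytic on $\operatorname{Re}(t)>-\tfrac12$ (the poles $-\bar\lambda_k-1$ lie in $\operatorname{Re}(t)<-\tfrac12$ by \cref{lambda_N}) and hence agree there by analytic continuation; (ii) you should state that $\Gamma_n$ is chosen inside the half-plane $\operatorname{Re}(t)>-\tfrac12$, which is what simultaneously legitimizes the Fubini step and guarantees that every pole of the form $-\bar\lambda_k-1$ is exterior to $\Gamma_n$. With those phrases added the argument is complete, and your remark that repeated exponents only raise pole orders without affecting the residue-sum bookkeeping is accurate.
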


Furthermore, we have 
\begin{equation}
\label{value_at_1}
L_n(1) = 1,\quad  L^\prime_n(1) = \lambda_n+\sum_{k=0}^{n-1} (\lambda_k+\bar{\lambda}_k+1),\quad n=0,1,\cdots,N,
\end{equation}
and the recurrence formula with derivatives for $L_n$,
\begin{equation}
x L_n^{\prime}(x)-x L_{n-1}^{\prime}(x)=\lambda_n L_n(x)+\left(1+\bar{\lambda}_{n-1}\right) L_{n-1}(x), \quad n = 1,2,\cdots,N.
\label{muntz_recursive1}
\end{equation}

Moreover, the moments with respect to $L_n$ in the interval $[0,1]$ have the recurrence formula.
\begin{theorem}
\label{thm:moment_recurrence}
Let M\"{u}ntz sequence $\Lambda_N$ and $\lambda \in \mathbb{C}$ satisfying 
\[ \operatorname{Re}(\lambda+\lambda_n) > -1,\quad n=0,1,\cdots,N. \]
M\"{u}ntz-Legendre polynomial $L_n$ is defined by \cref{muntz_legendre}. Then the moments satisfy
\begin{equation}
\label{moment_recurrence}
\int_0^1 L_n(x) x^\lambda \mathrm{d} x = \frac{\lambda-\bar{\lambda}_{n-1}}{1+\lambda+\lambda_n} \int_0^1 L_{n-1}(x) x^\lambda \mathrm{d} x, \quad n\geq 1,
\end{equation}
and $\int_0^1 L_0(x) x^\lambda \mathrm{d} x = {1}/{(1+\lambda+\lambda_0)}$.
\end{theorem}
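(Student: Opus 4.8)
The plan is to introduce the abbreviation $m_n(\lambda) = \int_0^1 L_n(x)\, x^\lambda\, \mathrm{d}x$ and to treat the base case and the recurrence separately. For the base case, since $L_0(x) = x^{\lambda_0}$, a direct computation gives $m_0(\lambda) = \int_0^1 x^{\lambda_0+\lambda}\,\mathrm{d}x = 1/(1+\lambda+\lambda_0)$, where convergence of the integral is guaranteed by the hypothesis $\operatorname{Re}(\lambda+\lambda_0) > -1$. This disposes of the first assertion immediately.

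For the recurrence with $n \geq 1$, the central idea is to exploit the derivative recurrence \cref{muntz_recursive1}, namely $x L_n'(x) - x L_{n-1}'(x) = \lambda_n L_n(x) + (1+\bar{\lambda}_{n-1}) L_{n-1}(x)$. I would multiply both sides by $x^\lambda$ and integrate over $[0,1]$. The right-hand side then produces exactly $\lambda_n m_n(\lambda) + (1+\bar{\lambda}_{n-1}) m_{n-1}(\lambda)$, while the left-hand side becomes a difference of two integrals of the form $\int_0^1 x^{\lambda+1} L_n'(x)\,\mathrm{d}x$. Each of these I would handle by integration by parts, writing
\[
\int_0^1 x^{\lambda+1} L_n'(x)\,\mathrm{d}x = \bigl[x^{\lambda+1} L_n(x)\bigr]_0^1 - (\lambda+1)\int_0^1 x^\lambda L_n(x)\,\mathrm{d}x = \bigl[x^{\lambda+1} L_n(x)\bigr]_0^1 - (\lambda+1) m_n(\lambda),
\]
and similarly for the $L_{n-1}$ term. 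Combining these and substituting back into the integrated recurrence yields a linear algebraic relation between $m_n(\lambda)$ and $m_{n-1}(\lambda)$, which rearranges directly to the claimed formula $m_n(\lambda) = \frac{\lambda-\bar{\lambda}_{n-1}}{1+\lambda+\lambda_n}\, m_{n-1}(\lambda)$.

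The step requiring genuine care is the evaluation of the boundary terms $\bigl[x^{\lambda+1} L_n(x)\bigr]_0^1$. At $x=1$ I would invoke $L_n(1) = 1$ from \cref{value_at_1}, so the upper endpoints contribute $1$ for both $L_n$ and $L_{n-1}$; since they enter with opposite signs in the difference on the left-hand side, these contributions cancel and do not even need to be tracked individually. At $x = 0$ I must argue that $x^{\lambda+1} L_n(x) \to 0$. Here I would use the structure of $L_n$: by \cref{repeated_indices} and the definition \cref{muntz_space_hat}, $L_n$ lies in $\hat{M}(\Lambda_n)$ and is a finite combination of terms $x^{\lambda_k}\log^{j} x$ with $k \leq n$. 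The hypothesis $\operatorname{Re}(\lambda+\lambda_k) > -1$ gives $\operatorname{Re}(\lambda+1+\lambda_k) > 0$ for each such $k$, so every summand $x^{\lambda+1+\lambda_k}\log^{j} x$ tends to zero as $x \to 0^+$ (the positive real part of the exponent dominates the logarithmic factor). Thus the lower boundary terms vanish.

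I expect the boundary analysis at the origin to be the only genuine obstacle; once it is settled, the remainder is a short and entirely routine algebraic manipulation. The argument is self-contained in that it relies only on the previously established identities \cref{muntz_recursive1} and \cref{value_at_1} together with the span characterization in \cref{repeated_indices}, and it does not invoke the contour-integral definition \cref{muntz_legendre} directly, which keeps the proof clean.
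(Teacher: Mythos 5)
Your argument is correct, but it follows a genuinely different route from the paper. The paper works directly from the contour-integral definition \cref{muntz_legendre}: after justifying an exchange of the order of integration it obtains $\int_0^1 L_n(x)x^\lambda\,\mathrm{d}x = \frac{1}{2\pi i}\int_{\Gamma_n}\frac{W_n(t)}{t+\lambda+1}\,\mathrm{d}t$, deforms the contour to a large circle, and uses the Cauchy integral formula plus a decay estimate to conclude the closed-form identity $\int_0^1 L_n(x)x^\lambda\,\mathrm{d}x = -W_n(-\lambda-1)$; the recurrence then drops out of the trivial factorization $W_n(t)=W_{n-1}(t)\frac{t+\bar{\lambda}_{n-1}+1}{t-\lambda_n}$. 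That approach buys an explicit closed form for every moment, not just the two-term recurrence, at the cost of some complex-analytic bookkeeping (uniform convergence to swap integrals, a limit argument when $\lambda\in\Lambda_N$, the $O(1/R)$ estimate). Your approach is more elementary: it needs only the stated derivative recurrence \cref{muntz_recursive1}, the normalization $L_n(1)=1$ from \cref{value_at_1}, and the span characterization from \cref{repeated_indices}, and it avoids the $\lambda\notin\Lambda_N$ case split entirely. I verified the algebra: integration by parts gives $\int_0^1 x^{\lambda+1}L_n'(x)\,\mathrm{d}x = 1-(\lambda+1)m_n(\lambda)$ once the boundary term at the origin is shown to vanish, and the resulting relation $-(1+\lambda+\lambda_n)m_n(\lambda)=(\bar{\lambda}_{n-1}-\lambda)m_{n-1}(\lambda)$ rearranges to \cref{moment_recurrence}. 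Your treatment of the boundary term at $0$ is the right one; the only point you leave implicit, which you should state for completeness, is that the same structural expansion of $L_n$ as a combination of $x^{\lambda_k}\log^j x$ shows $x^{\lambda+1}L_n'(x)$ is a combination of $x^{\lambda+\lambda_k}\log^j x$ with $\operatorname{Re}(\lambda+\lambda_k)>-1$, so the integrals appearing in the integration by parts are themselves convergent (i.e., integrate on $[\varepsilon,1]$ and let $\varepsilon\to 0^+$).
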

\begin{proof}
The case of $n=0$ is trivial and we subsequently consider $n\geq 1$. 
We assume $\lambda \notin \Lambda_N$, otherwise there is a limit argument since 
\[\int_0^1 L_n(x) x^\lambda \mathrm{d} x,\quad  n=0,1,\cdots,N, \]
are continuous with respect to $\lambda$.
 Let $\Gamma_n$ encircle $\lambda_0,\lambda_1,\cdots,\lambda_n$ and satisfy
\[\operatorname{Re}(t+\lambda)>-1,\quad \forall t \in \Gamma_n.\]
By \cref{muntz_legendre},
\[
\int_0^1 L_n(x) x^\lambda \mathrm{d} x = \frac{1}{2\pi i} \int_0^1 \int_{\Gamma_n} W_n(t) x^{t+\lambda} \mathrm{d} t \mathrm{d} x.
\]
For any $\varepsilon>0$, there exists $0<\delta < 1$, such that
\[
\left | \int_0^\delta x^{t+\lambda} \mathrm{d} x \right | 
< \varepsilon, \quad \forall t\in \Gamma_n.
\]
Since $W_n(t)$ is continuous on the closed curve $\Gamma_n$, $W_n(t)$ is bounded and achieves its upper and lower bounds on $\Gamma_n$.
Therefore, the integral $\int_0^1 W_n(t) x^{t+\lambda} \mathrm{d} x$ converges uniformly for $t\in \Gamma_n$. Thus, the order of integration can be exchanged, i.e., 
\[
\int_0^1 \int_{\Gamma_n} W_n(t) x^{t+\lambda}  \mathrm{d} t\mathrm{d} x
=
\int_{\Gamma_n} \int_0^1 W_n(t) x^{t+\lambda}  \mathrm{d} x \mathrm{d} t.
\]
Thus,
\[
\begin{aligned}
\int_0^1 L_n(x)x^\lambda \mathrm{d} x
=\frac{1}{2\pi \mathrm{i}}
\int_{\Gamma_n}  \frac{W_n(t)}{t+\lambda+1} \mathrm{d} t.
\end{aligned}
\]
Note that $\Gamma_n$ does not encircle the singularity $-\lambda-1$. We change the integration path to $|t|=R$, where $R>\max \{|\lambda_0|+1,|\lambda_1|+1,\cdots,|\lambda_n|+1,|\lambda|+1 \}$. Then, using the Cauchy integral formula \cite{stein2010complex}, we have
\begin{equation}
\int_0^1 L_n(x)x^\lambda \mathrm{d} x
= \frac{1}{2\pi i} \int_{|t|=R}  \frac{W_n(t)}{t+\lambda+1} \mathrm{d} t - W_n(-\lambda-1).
\label{muntz_integral1}
\end{equation}
Since
\[
\left| \int_{|t|=R} \frac{ W_n(t) }{t+\lambda+1} \mathrm{d} t\right | 
= 
\left |
\int_0^{2\pi}  \frac{W_n(Re^{i\theta}) }{Re^{i\theta}+\lambda+1} Rie^{i\theta} \mathrm{d} \theta
\right | = O\left (\frac{1}{R} \right ),
\]
we take the limit $R\to \infty$ in \cref{muntz_integral1}, then the integral term over $|t|=R$ vanishes, i.e.,
\begin{equation}
    \int_0^1 L_n(x) x^\lambda \mathrm{d} x = - W_n(-\lambda-1).
  \label{temp65}
\end{equation}
Similarly, we have
\begin{equation}
    \int_0^1 L_{n-1}(x) x^\lambda \mathrm{d} x = - W_{n-1}( -\lambda-1).
    \label{temp64}
\end{equation}
By the definition of $W_n(t)$, it follows that 
\begin{equation}
W_n(t)
= W_{n-1}(t) \frac{t+\bar{\lambda}_{n-1}+1}{t-\lambda_n} .
\label{temp1000}
\end{equation}
By combining \cref{temp65}, \cref{temp64}, and \cref{temp1000}, we  obtain \cref{moment_recurrence}.
\end{proof}

Let $\beta \in \mathbb{R}$ and $\Lambda_N$ satisfy 
\begin{equation}
\operatorname{Re}(\lambda_n)+\beta/2 > -1/2, \quad n=0,1,\cdots,N.
\label{lambda_n}
\end{equation}
Putting $\lambda_k + \beta/2$ instead of $\lambda_k$, $k=0,1,\cdots,N$, in the M\"{u}ntz sequence $\Lambda_N$, we can define a kind of M\"{u}ntz-Jacobi polynomials $L_n^\beta (x;\Lambda_n) = x^{-{\beta}/{2}} L_n(x;\Lambda_n+{\beta}/{2})$. Then
\begin{equation}
L_{n}^\beta (x;\Lambda_n) =
\frac{x^{-{\beta}/{2}}}{2\pi i}
\int_{\Gamma} \prod_{k=0}^{n-1} \frac{t+\bar{\lambda}_k + {\beta}/{2} + 1}{t - \lambda_k - {\beta}/{2}}  \frac{x^t}{t-\lambda_n - {\beta}/{2}} \mathrm{d} t.
\label{weighted_muntz1}
\end{equation}
Owing to the properties of M\"{u}ntz-Legendre polynomials, the following results hold.
\begin{theorem}
Let $\beta \in \mathbb{R}$, and $\Lambda_{N}$ satisfy \cref{lambda_n}. Then
\begin{equation}
\int_0^1 L_{n}^\beta(x;\Lambda_n) L_{m}^\beta (x;\Lambda_m) x^\beta \mathrm{d} x = \frac{\delta_{n,m}}{\lambda_n+\bar{\lambda}_n+\beta+1},\quad n,m=0,1,\cdots,N.
\end{equation}
\end{theorem}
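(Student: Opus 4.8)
The plan is to reduce this weighted orthogonality directly to the unweighted orthogonality of \cref{muntz_orhogonal} by absorbing the weight $x^\beta$ into the very definition of the M\"{u}ntz-Jacobi polynomials. The starting observation is that, by construction, $L_n^\beta(x;\Lambda_n) = x^{-\beta/2} L_n(x;\Lambda_n+\beta/2)$, where $\Lambda_n+\beta/2$ denotes the shifted M\"{u}ntz sequence $\{\lambda_0+\beta/2,\ldots,\lambda_n+\beta/2\}$. I would first note that, to match the Hermitian convention used in \cref{muntz_orhogonal}, the second factor in the pairing should carry a complex conjugate; this is immaterial when $\beta$ is real, since $x^{-\beta/2}$ is then real and can be moved freely past the conjugate.

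With that understood, I would substitute the definition into the integral. The two prefactors $x^{-\beta/2}$ together with the weight $x^\beta$ combine as $x^{-\beta/2}\,x^{-\beta/2}\,x^\beta = 1$, which collapses the weighted integral to
\[
\int_0^1 L_n(x;\Lambda_n+\beta/2)\,\overline{L_m(x;\Lambda_m+\beta/2)}\,\mathrm{d}x,
\]
precisely the unweighted $L^2(0,1)$ pairing of the ordinary M\"{u}ntz-Legendre polynomials built from the shifted sequence.

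Next I would verify that the shifted family is admissible for \cref{muntz_orhogonal}. Its exponents are $\mu_n = \lambda_n+\beta/2$, and the required hypothesis \cref{lambda_N}, namely $\operatorname{Re}(\mu_n) > -1/2$, reads $\operatorname{Re}(\lambda_n)+\beta/2 > -1/2$, which is exactly the assumed condition \cref{lambda_n}. Thus \cref{muntz_orhogonal} applies verbatim and gives $\delta_{n,m}/(1+\mu_n+\bar{\mu}_n)$. Finally, since $\beta$ is real, $\mu_n+\bar{\mu}_n = (\lambda_n+\beta/2)+(\bar{\lambda}_n+\beta/2) = \lambda_n+\bar{\lambda}_n+\beta$, so the value simplifies to $\delta_{n,m}/(\lambda_n+\bar{\lambda}_n+\beta+1)$, as claimed.

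The argument is essentially change-of-variable bookkeeping, so there is no deep obstacle; the only points requiring genuine care are the two I flagged, namely handling the complex conjugation through the reality of $x^{-\beta/2}$, and checking that the admissibility condition for the shifted sequence coincides \emph{exactly} with the stated hypothesis rather than being a stronger requirement.
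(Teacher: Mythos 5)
Your proposal is correct and is precisely the argument the paper intends: the paper offers no written proof, merely asserting the result holds ``owing to the properties of M\"{u}ntz-Legendre polynomials,'' and your substitution of $L_n^\beta(x;\Lambda_n)=x^{-\beta/2}L_n(x;\Lambda_n+\beta/2)$, cancellation of the weight, and appeal to \cref{muntz_orhogonal} for the shifted sequence is exactly that omitted computation. The two points you flag --- the conjugation convention for real $\beta$ and the exact match between \cref{lambda_n} and the hypothesis \cref{lambda_N} for the shifted exponents --- are handled correctly.
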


It is evident that $\hat{M}(\Lambda_{2N+1}) = \mathrm{span} \{ L_0^\beta,\cdots,L_{2N+1}^\beta \}$. 
Let $\partial_x L_n^\beta$ denote the derivative of $L_n^\beta$.
It follows from \cref{value_at_1} that 
\begin{equation}
L_n^\beta(1) = 1,\quad 
\partial_x L_n^\beta(1) = \lambda_n + \sum_{k=0}^{n-1} (\lambda_k + \bar{\lambda}_k+\beta+1),\quad n=0,1,\cdots,N.
\end{equation}
And the recurrence formula of $L_n^\beta$
\begin{equation}
x \partial_x L_n^\beta - x \partial_x L_{n-1}^\beta = \lambda_{n} L_n^\beta + (1+\bar{\lambda}_{n-1} + \beta) L_{n-1}^\beta.
\end{equation}
Moreover, the moments recurrence read
\begin{equation}
\label{moments_recurrence}
\int_0^1 L_n^\beta x^\beta \mathrm{d} x = \frac{-\lambda_{n-1}}{1+\lambda_n+\beta}
\int_0^1 L_{n-1}^\beta x^\beta \mathrm{d} x.
\end{equation}

\section{Computation of M\"{u}ntz-Legendre polynomials}
\label{sec:muntz_computing}
Let $\Lambda_N$ be a M\"{u}ntz sequence and the corresponding M\"{u}ntz-Legendre polynomial be defined as in \cref{muntz_legendre}.
In contrast to the computation of algebraic polynomials, which can be efficiently achieved using a three-term recurrence formula, M\"{u}ntz-Legendre polynomials lack a recurrence formula that enables high-efficiency computation. Therefore, we resort to the complex integration method \cite{milovanovic1999muntz, milovanovic2017computing}, which aims to directly compute the original definition given in \cref{muntz_legendre}.

\begin{theorem}
[\cite{milovanovic1999muntz}]
For all $x\in (0,1)$, $L_n(x)$ can be written as
\begin{equation}
L_n(x)=
-\frac{x^{\sigma}}{2\pi} \left (
\int_{0}^{+\infty} W_n(\sigma - it) e^{i\omega t} \mathrm{d} t
+ \int_{0}^{+\infty} W_n(\sigma + it) e^{-i\omega t} \mathrm{d} t
\right ), 
\end{equation}
where $\sigma < \min \{ \operatorname{Re}(\lambda_0),\operatorname{Re}(\lambda_1),\cdots,\operatorname{Re}(\lambda_N)  \}$, $\omega = -\log x$.
\end{theorem}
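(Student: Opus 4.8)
The plan is to start from the contour-integral definition \cref{muntz_legendre}, namely $L_n(x) = \frac{1}{2\pi i}\int_{\Gamma_n} W_n(t) x^t \,\mathrm{d}t$ with $\Gamma_n$ positively oriented around the poles $\lambda_0,\dots,\lambda_n$, and to deform $\Gamma_n$ into the vertical line $\operatorname{Re}(t)=\sigma$. Since $\sigma<\min_k\operatorname{Re}(\lambda_k)$, every pole $\lambda_k$ lies strictly to the right of this line. I would fix $R$ large enough that all $\lambda_k$ lie in the right half-disc $D_R=\{t:\operatorname{Re}(t)\ge\sigma,\ |t-\sigma|\le R\}$, and let $\partial D_R$ be its positively oriented boundary, consisting of the semicircular arc $A_R$ (from $\sigma-iR$ through $\sigma+R$ to $\sigma+iR$) followed by the vertical diameter traversed downward from $\sigma+iR$ to $\sigma-iR$. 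Because $\partial D_R$ encircles exactly the same poles as $\Gamma_n$, Cauchy's residue theorem gives $L_n(x)=\frac{1}{2\pi i}\oint_{\partial D_R} W_n(t)x^t\,\mathrm{d}t$.

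Next I would evaluate the diameter contribution. Parametrizing $t=\sigma+is$ and using $x^{\sigma+is}=x^\sigma e^{-is\omega}$ with $\omega=-\log x>0$, the downward diameter integral equals $-\frac{x^\sigma}{2\pi}\int_{-R}^{R}W_n(\sigma+is)e^{-is\omega}\,\mathrm{d}s$. Splitting the range at $s=0$ and substituting $s\mapsto -t$ on the negative half turns this into $-\frac{x^\sigma}{2\pi}\bigl(\int_0^R W_n(\sigma-it)e^{i\omega t}\,\mathrm{d}t+\int_0^R W_n(\sigma+it)e^{-i\omega t}\,\mathrm{d}t\bigr)$, which is exactly the claimed expression truncated at $R$; the overall minus sign is produced by the downward orientation of the diameter in $\partial D_R$. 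The improper integrals converge (conditionally) as $R\to\infty$ because $W_n(\sigma\pm it)=O(1/t)$ while the exponential factors oscillate and $\omega\neq 0$.

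The crux is to show that the arc contribution $\frac{1}{2\pi i}\int_{A_R}W_n(t)x^t\,\mathrm{d}t$ vanishes as $R\to\infty$. The factors of $W_n$ each tend to $1$ except for $1/(t-\lambda_n)$, so $W_n(t)=O(1/|t|)$, giving $|W_n(t)|\le C/R$ uniformly on $A_R$. A naive bound using $|x^t|\le x^\sigma$ together with the arc length $\pi R$ only yields $O(1)$ and is therefore insufficient; the decisive ingredient is the exponential decay of $x^t$ in the right half-plane. Writing $t=\sigma+Re^{i\theta}$ with $\theta\in[-\pi/2,\pi/2]$ gives $|x^t|=x^\sigma e^{-R\omega\cos\theta}$, and since $0<x<1$ forces $\omega>0$, a Jordan-type estimate $\int_{-\pi/2}^{\pi/2}e^{-R\omega\cos\theta}\,\mathrm{d}\theta=O(1/R)$ yields $|\int_{A_R}W_n x^t\,\mathrm{d}t|\le C x^\sigma\int_{-\pi/2}^{\pi/2}e^{-R\omega\cos\theta}\,\mathrm{d}\theta\to 0$. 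Combining the vanishing arc with the diameter limit from the previous step produces the stated formula. I expect the arc estimate, and specifically the need to exploit $\omega>0$ via Jordan's lemma rather than a crude modulus bound, to be the main technical obstacle.
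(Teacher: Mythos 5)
The paper does not prove this theorem itself --- it is quoted directly from the cited reference --- but your argument is the standard one behind it: deform the defining contour of $L_n$ onto the vertical line $\operatorname{Re}(t)=\sigma$ closed by a large right half-plane semicircle, kill the arc using the decay $|x^t| = x^{\sigma}e^{-R\omega\cos\theta}$ with $\omega>0$ via a Jordan-type estimate, and read off the two half-line integrals from the downward-oriented diameter. The computation of the diameter term (including the sign and the split at $s=0$) and the $O(1/R)$ arc bound are both correct, so the proposal is a valid proof consistent with the cited source.
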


We only consider the real M\"{u}ntz sequence. It is obvious that $\overline{ W_n(t)} = W_n(\bar{t})$ and 
\begin{equation}
L_n(x) = 
-\frac{x^{\sigma}}{\pi}\operatorname{Re}\left \{ \int_0^{+\infty}{W_n(\sigma-it) e^{i\omega t}} \mathrm{d} t \right \}.
\label{temp14}
\end{equation}
Without loss of generality, we apply a scaling transformation to the integral in equation $\eqref{temp14}$. Thus
\[
\int_0^{+\infty} W_n(\sigma-it) e^{i\omega t} \mathrm{d} t
= 
\frac{1}{\omega } \int_0^{+\infty} W_n\left (\sigma-i\frac{t}{\omega}\right ) e^{i t} \mathrm{d} t .
\]
Let $f_n(t,\omega) = -\frac{i}{\omega} W_n(\sigma - \frac{it}{\omega}) $, then
\begin{equation}
f_n(t,\omega) 
=  \prod_{v=0}^{n-1} \frac{t+i\omega(\sigma+\bar{\lambda}_v+1)}{t + i\omega (\sigma - \lambda_v)} \frac{1}{t+i\omega(\sigma-\lambda_{n})} . 
\label{fn_tw}
\end{equation}
Hence
\begin{equation}
    L_n(x) = \frac{x^\sigma}{\pi} \operatorname{Im} \left \{
\int_0^{+\infty} f_n(t,\omega)e^{it} \mathrm{d} t
\right \},
\label{numerical_integral1}
\end{equation}
where $\operatorname{Im}\{u\}$ denotes the image part of $u$.

\begin{theorem}
[\cite{milovanovic1999muntz}]
For any $ a > 0$,  the integral of $f_n(t,\omega)$can be transformed into
\begin{equation}
\int_{0}^{+\infty} f_n(t,\omega) e^{it}\mathrm{d} t 
 =  \int_{0}^a f_n(t,\omega)e^{it} \mathrm{d} t
+ 
ie^{ia} \int_0^{+\infty} f_n(a+iy,\omega) e^{-y} \mathrm{d} y.
\label{numerical_integral2}
\end{equation}
\end{theorem}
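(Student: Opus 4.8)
The plan is to recognize the second term on the right-hand side as a contour integral of the single holomorphic function $g(z) = f_n(z,\omega)\,e^{iz}$ taken along the vertical ray emanating upward from $a$, and then to use Cauchy's theorem to identify it with the tail $\int_a^{+\infty} f_n(t,\omega) e^{it}\,\mathrm{d}t$ of the original integral. Concretely, the substitution $z = a + iy$ (so that $\mathrm{d}z = i\,\mathrm{d}y$ and $e^{iz} = e^{ia}e^{-y}$) gives at once
\[
i e^{ia}\int_0^{+\infty} f_n(a+iy,\omega)\, e^{-y}\,\mathrm{d}y
= \int_{a}^{a+i\infty} f_n(z,\omega)\, e^{iz}\,\mathrm{d}z .
\]
Hence the whole identity reduces to proving that the integral of $g$ from $a$ to infinity along the real axis equals its integral from $a$ to infinity along the vertical direction; adding $\int_0^a f_n(t,\omega)e^{it}\,\mathrm{d}t$ to both sides then yields \cref{numerical_integral2}.

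First I would locate the poles of $f_n$. From \cref{fn_tw} the only singularities occur at $t = -i\omega(\sigma - \lambda_v)$ for $v = 0,\ldots,n$. Because we work with a real M\"untz sequence and $\sigma < \min_v \operatorname{Re}(\lambda_v)$, each quantity $\sigma - \lambda_v$ is a strictly negative real number, so every pole lies on the positive imaginary axis and in particular has real part $0$. Consequently $g$ is holomorphic on the open right half-plane $\operatorname{Re}(z) > 0$, and for every $a > 0$ there are no singularities in the closed quarter-disk sector centered at $a$ lying in the first quadrant relative to $a$.

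Next I would apply Cauchy's theorem on the sector contour centered at $a$ with radius $R$: the segment $[a, a+R]$ along the real axis, the circular arc $z = a + Re^{i\theta}$ with $\theta \in [0,\pi/2]$, and the vertical segment from $a + iR$ back to $a$. Since $g$ is holomorphic inside and on this contour, its integral around the closed path vanishes, which rearranges to
\[
\int_a^{a+R} g(z)\,\mathrm{d}z - \int_a^{a+iR} g(z)\,\mathrm{d}z = -\int_{\mathrm{arc}} g(z)\,\mathrm{d}z .
\]
Letting $R \to \infty$ will give the desired equality of the two rays, provided the arc contribution vanishes.

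The hard part will be the arc estimate. On the arc one has $|e^{iz}| = e^{-R\sin\theta}$, while \cref{fn_tw} shows $f_n(z,\omega) = O(1/|z|)$ as $|z|\to\infty$ (the numerator and denominator factors in the product are asymptotically equal, and the remaining factor contributes the $1/z$ decay). Thus $|g| \le (C/R)\,e^{-R\sin\theta}$ on the arc, and combining the arc-length element $R\,\mathrm{d}\theta$ with Jordan's inequality $\sin\theta \ge 2\theta/\pi$ on $[0,\pi/2]$ bounds the arc integral by $C\int_0^{\pi/2} e^{-R\sin\theta}\,\mathrm{d}\theta \le C\pi/(2R) \to 0$. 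The delicate point is that near $\theta = 0$ the exponential factor does not decay, so the vanishing genuinely relies on Jordan's lemma rather than on a crude size bound; the algebraic decay of $f_n$ is merely what makes the estimate comfortable. With the arc integral shown to vanish, taking $R\to\infty$ completes the identification of the two rays and hence establishes \cref{numerical_integral2}.
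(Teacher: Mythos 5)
The paper does not prove this statement; it is quoted directly from \cite{milovanovic1999muntz}, so there is no in-paper argument to compare against. Your proof is correct and is the standard contour-rotation argument: the poles of $f_n(\cdot,\omega)$ sit at $i\omega(\lambda_v-\sigma)$ on the positive imaginary axis (since $\sigma<\min_v\lambda_v$ and $\omega>0$), so $f_n(z,\omega)e^{iz}$ is holomorphic on the quarter-sector based at $a>0$, the parametrization $z=a+iy$ identifies the second right-hand term with the vertical ray integral, and Jordan's inequality together with the $O(1/|z|)$ decay of $f_n$ kills the arc, which simultaneously establishes convergence of the improper integral on the left.
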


Thus, we have 
\begin{equation}
    L_n(x) = \frac{x^\sigma}{\pi} \operatorname{Im}\left \{ I^n_1 + I^n_2 \right \},
\end{equation}
where
\[
I^n_1 = \int_0^a  f_n(t,\omega) e^{it} \mathrm{d} t,\quad 
I^n_2 = i e^{i a} \int_0^{+\infty} f_n(a+i y,\omega) e^{-y} \mathrm{d} y.
\]
Therefore, it is need to accurately compute two parts: $x^\sigma$ and $I^n_1+I^n_2$. By a simple scaling argument, it follows that
\begin{equation}
\left |f_n(t,\omega) \right |  \leq 
\prod_{v=0}^{n-1} \frac{|\sigma + \bar{\lambda}_v+1|}{|\sigma-\lambda_v|}  \frac{1}{\omega|\sigma-\lambda_{n}|} .
\label{temp36}
\end{equation}
When $\sigma$ approaches $\lambda_v$ for some $v$ or $\omega$ approaches $0$, $f_n(t,\omega)$ exhibits singularity at $0$.  We aim to compute $L_n$ with the objective of ensuring that:
\begin{enumerate}
\item $f_n(t,w)$ does not exhibit singularity at $0$, so that $I^n_1+I^n_2$ can be accurately calculated numerically.
\item $x^\sigma$ does not amplify the error of $I^n_1+I^n_2$.
\end{enumerate}

Let $\sigma = \lambda_{min}- \theta / \omega$, where $\theta>0$ is a parameter to be determined, and $\lambda_{min} = \min_{v} {\lambda_v }$. Then we have
\[
\prod_{v=0}^{n-1} \frac{|\sigma+\bar{\lambda}_v+1|}{|\sigma-\lambda_v|}  \frac{1}{\omega|\sigma-\lambda_n|}
=
\prod_{v=0}^{n-1} \frac{|\theta-\omega(\lambda_{min} + \bar{\lambda}_v+1)|}{|\theta+\omega( \lambda_{min} + \lambda_v  )|}  \frac{1}{|\theta + \omega (\lambda_{min} + \lambda_n)|},
\]
and
\[x^\sigma = x^{\lambda_{min}} e^{\theta}.\]
The choice of $\sigma$ effectively avoids the singularity of $f_n(t,\omega)$ as $\omega$ approaches $0$.
However, when $\lambda_{min} + \lambda_v$ approaches $0$ for some $v$, we need to carefully choose the value of $\theta$. On the one hand, we cannot choose too large $\theta$ as it will result in large $x^\sigma$ and may cause numerical instability. On the other hand, we cannot choose a too small $\theta$ as it will make $f_n(t,\omega)$ singular near $0$, which also leads to numerical issues. Hence, it is crucial to strike a balance between these two considerations.
Let
\[
\mathcal{R}_n (\theta)= e^{\sqrt{\omega}} \prod_{v=0}^{n-1} \frac{|\theta-\omega(\lambda_{min} + \bar{\lambda}_v+1)|}{|\theta+\omega( \lambda_{min} + \lambda_v  )|}  \frac{1}{|\theta + \omega (\lambda_{min} + \lambda_n)|}
+ \frac{e^{\theta-\lambda_{min}\omega}}{\sqrt{\theta}}.
\]
We use the Nelder-Mead simplex algorithm \cite{lagarias1998convergence} to calculate $\theta = \mathrm{arg} \min_{\theta>0} \mathcal{R}_n (\theta)$.

For the calculation of $I^n_2$, we choose an appropriate $a>0$ and use Gauss-Laguerre quadrature to approximate it.

For the calculation of $I^n_1$, note that $f_n(t,\omega) e^{it}$ oscillates in $(0,a)$. When $x \in [\epsilon,1]$ for some small $\epsilon>0$, the oscillation factor $\omega$ falls within the range of  $[0,-\log \epsilon]$. Consequently, in double precision calculations, its value remains relatively small. Therefore, we use a piecewise approximation to calculate $I^n_1$. Let $a=mh$, where $h$ is a small constant and $m\in \mathbb{N}^+$ is a positive integer. Then
\begin{equation}
\label{piecewise_approximation}
I^n_1
= h \int_0^1 \sum_{k=1}^m f_n (h(y+k-1),\omega) e^{i h(y+k-1)} \mathrm{d} y
\end{equation}
can be approximated by using Gauss-Legendre quadrature. We denote $Q_1^n$ and $Q_2^n$  as the numerical approximations of $I_1^n$ and $I_2^n$, respectively.

In terms of implementation, the recurrence relation
\[
f_{n+1}(t,\omega) = f_n(t,\omega)  \frac{t+i\omega(\sigma + \bar{\lambda}_n + 1)}{t + i\omega (\sigma- \lambda_{n+1})}
\]
enables us to optimize the computation of all M\"{u}ntz-Legendre polynomials by dynamic programming. As a result, the computational cost of $L_0(x),L_1(x),\cdots,L_N(x)$ is comparable to that of $L_N(x)$. This process is summarized in the following \cref{alg:muntz_computing}.

\begin{algorithm}
\caption{Compute all M\"{u}ntz-Legendre polynomials for real M\"{u}ntz sequence}
\label{alg:muntz_computing}
\begin{algorithmic}
\REQUIRE{$\Lambda_N$ and $x$.}
\STATE{Choose $m$, $h$ in \cref{piecewise_approximation} and $M\in\mathbb{N}^+$.}
\STATE{Compute $\theta = \mathrm{arg} \min_{\theta>0} \mathcal{R}_N(\theta)$, $\omega=-\log x$ and $\sigma = \lambda_{min} - \theta/\omega$.}
\STATE{Compute Gauss-Legendre rule $\{\xi_j,\psi_j\}_{j=0}^M$
and Gauss-Laguerre rule $\{\tau_j,\eta_j\}_{j=0}^M$.}

\STATE{Compute $H_{j,k}^0 = f_0(h(\xi_j+k-1),\omega) e^{ih(\xi_j+k-1)}$ and 
$f_0(mh+i\tau_j,\omega)$, $j=0,1,\cdots,M$.}

\STATE{Let $L_0(x) \leftarrow x^{\lambda_0}$.}
\FOR{$n=1,2,\cdots,N$}
\FOR{$j=1,2,\cdots,M$}
\STATE{$$H_{j,k}^n = \frac{h(\xi_j+k-1) + i\omega(\sigma+\lambda_{n-1}+1)}{h(\xi_j+k-1) + i\omega(\sigma - \lambda_{n})} H_{j,k}^{n-1},\quad k=0,1,\cdots,m.$$}
\STATE{$$f_n(mh+i\tau_j,\omega) = \frac{mh+i\tau_j + i\omega (\sigma+\lambda_{n-1}+1)}{mh+i\tau_j + i\omega (\sigma-\lambda_n)} f_{n-1}(mh+i\tau_j,\omega). $$}
\ENDFOR
\STATE{Compute $Q_1^n \leftarrow h\sum_{j=1}^M \sum_{k=0}^m H_{j,k}^n \psi_j$ and $Q_2^n \leftarrow ie^{imh} \sum_{j=1}^M f_n(mh+i\tau_j,\omega) \eta_j$.}
\STATE{Update $L_n(x) \leftarrow \frac{x^\sigma}{\pi} \operatorname{Im}\{Q_1^n + Q_2^n\}$.}
\ENDFOR
\RETURN{$L_0(x), L_1(x), \cdots, L_N(x)$.}
\end{algorithmic}
\end{algorithm}

\section{Numerical construction of quadrature rule}
\label{sec:quadrature}
In this section, we present a method for the numerical construction of the generalized Gauss quadrature that is weighted by power function $\omega(x) = x^\beta$ for M\"{u}ntz polynomials. Assume that the M\"{u}ntz sequence $\Lambda_{2N+1}$ satisfies $\eqref{lambda_n}$ and consists solely of real indices. Our goal is to determine Gaussian nodes $x_k$ and weights $\omega_k$, $k=0,1,\cdots,N$, such that 
\begin{equation}
\sum_{k=0}^N L_{n}^\beta(x_k;\Lambda_n) \omega_k = m_n, \quad n=0,1,\cdots,2N+1,
\label{muntz_gauss_quadrature}
\end{equation}
where the moments $m_n$ are defined as
$$m_n = \int_{0}^1 L_n^\beta(x;\Lambda_n) \omega(x) \mathrm{d}x .$$
The existence and uniqueness of the Gauss quadrature \cref{muntz_gauss_quadrature} are guaranteed by \cref{generalized_gauss_quadrature1} and \cref{generalized_gauss_quadrature2}, as the set $\{L_0^\beta,\cdots,L_{2N+1}^\beta \}$ constitutes a Chebyshev system.

We define $\mathbf{c} = [m_0,\cdots,m_N]^\mathrm{T}$, $\mathbf{d} = [m_{N+1},\cdots,m_{2N+1}]^\mathrm{T}$, $\mathbf{m} = [\mathbf{c}^\mathrm{T},\mathbf{d}^\mathrm{T}]^\mathrm{T}$, $\bm{\omega} = [\omega_0,\cdots,\omega_{N}]^\mathrm{T}$,
$\mathbf{x}^{-\frac{\beta}{2}} = [x_0^{-\frac{\beta}{2}},\cdots,x_N^{-\frac{\beta}{2}}]^\mathrm{T}$, and the matrices $\mathbf{U}$ and $\mathbf{V}$
\begin{equation} \mathbf{U} = 
\left[ 
\begin{matrix}
L_0(x_0;\Lambda_0+\beta/2), & \cdots, & L_0(x_N;\Lambda_0+\beta/2)\\
L_1(x_0;\Lambda_1+\beta/2), & \cdots, & L_1(x_N;\Lambda_1+\beta/2)\\
\vdots & & \vdots\\ 
L_N(x_0;\Lambda_N+\beta/2), & \cdots, & L_N(x_N;\Lambda_N+\beta/2)\\
\end{matrix}
\right ],
\label{matrix_u}
\end{equation}
\begin{equation}
\mathbf{V} = 
\left[ 
\begin{matrix}
L_{N+1}(x_0;\Lambda_{N+1}+\beta/2), & \cdots, & L_{N+1}(x_N;\Lambda_{N+1}+\beta/2)\\
L_{N+2}(x_0;\Lambda_{N+2}+\beta/2), & \cdots, & L_{N+2}(x_N;\Lambda_{N+2}+\beta/2)\\
\vdots & & \vdots \\
L_{2N+1}(x_0;\Lambda_{2N+1}+\beta/2), & \cdots, & L_{2N+1}(x_N;\Lambda_{2N+1}+\beta/2)\\
\end{matrix}
\right ].
\label{matrix_v}
\end{equation}
Then, the equation \cref{muntz_gauss_quadrature} can be written in matrix form as
\begin{equation}
    \begin{bmatrix}
    \mathbf{U}\\ \mathbf{V}
    \end{bmatrix}
    \mathrm{diag} (\mathbf{x}^{-\frac{\beta}{2}})
    \bm{\omega}
    = 
    \begin{bmatrix}
    \mathbf{c} \\ 
    \mathbf{d} 
    \end{bmatrix},
\end{equation}
where $\mathrm{diag}(\mathbf{x})$ is a diagonal matrix with $\mathbf{x}$ as its entries.
Let $\bm{\Psi} = [\mathbf{U}^\mathrm{T},\mathbf{V}^\mathrm{T}]^\mathrm{T}$ and $\mathbf{z} = [\mathbf{x}^{\mathrm{T}}, \bm{\omega}^{\mathrm{T}}]^{\mathrm{T}}$. We construct a mapping $\mathbf{F}: \mathbb{R}^{2N+2}\rightarrow \mathbb{R}^{2N+2}$ such that
\begin{equation}
\mathbf{F}(\mathbf{z}) = \bm{\Psi} \mathrm{diag}(\mathbf{x}^{-\frac{\beta}{2}}) \bm{\omega} - \mathbf{m}.
\label{map_F}
\end{equation}
Then, there exists a unique solution $\mathbf{z}^*$ of $\mathbf{F}(\mathbf{z})=0$, which corresponds to the Gaussian nodes $\mathbf{x}^*$ and weights $\bm{\omega}^*$.
For a given $\mathbf{z}$, computing $\mathbf{F}(\mathbf{z})$ requires both $\bm{\Psi}$ and $\mathbf{m}$. As described in \cref{sec:muntz_computing}, $\bm{\Psi}$ can be computed by \cref{alg:muntz_computing}, while $\mathbf{m}$ can be recursively computed using \cref{moments_recurrence}.

Consider the Jacobian of $\mathbf{F}$
\begin{equation}
\mathbf{J}(\mathbf{z})
=
\partial_{\mathbf{z}} \mathbf{F} = 
\tilde{\mathbf{J}}(\mathbf{z})
\begin{bmatrix}
\mathrm{diag}(\mathbf{x}^{-\frac{\beta}{2}-1}) & \\
& \mathrm{diag}(\mathbf{x}^{-\frac{\beta}{2}})
\end{bmatrix}
\begin{bmatrix}
\mathrm{diag}(\bm{\omega}) & \\
 & \mathrm{diag}(\mathbf{1}) 
\end{bmatrix},
\label{matrix_j}
\end{equation}
where
\begin{equation}
    \tilde{\mathbf{J}}(\mathbf{z})
    =
    \begin{bmatrix}
    -\frac{\beta}{2} \mathbf{U} + \mathbf{U}^\prime \mathrm{diag}(\mathbf{x}) & \mathbf{U} \\
    -\frac{\beta}{2} \mathbf{V} + \mathbf{V}^\prime \mathrm{diag}(\mathbf{x}) & \mathbf{V}
    \end{bmatrix},
\end{equation}
\begin{equation} \mathbf{U}^\prime = 
\left[ 
\begin{matrix}
L^\prime_0(x_0;\Lambda_0+\beta/2), & \cdots, & L^\prime_0(x_N;\Lambda_0+\beta/2)\\
L^\prime_1(x_0;\Lambda_1+\beta/2), & \cdots, & L^\prime_1(x_N;\Lambda_1+\beta/2)\\
\vdots & & \vdots\\ 
L^\prime_N(x_0;\Lambda_N+\beta/2), & \cdots, & L^\prime_N(x_N;\Lambda_N+\beta/2)\\
\end{matrix}
\right ],
\label{matrix_y}
\end{equation}
and
\begin{equation}
\mathbf{V}^\prime = 
\left[ 
\begin{matrix}
L^\prime_{N+1}(x_0;\Lambda_{N+1}+\beta/2), & \cdots, & L^\prime_{N+1}(x_N;\Lambda_{N+1}+\beta/2)\\
L^\prime_{N+2}(x_0;\Lambda_{N+2}+\beta/2), & \cdots, & L^\prime_{N+2}(x_N;\Lambda_{N+2}+\beta/2)\\
\vdots & & \vdots \\
L^\prime_{2N+1}(x_0;\Lambda_{2N+1}+\beta/2), & \cdots, & L^\prime_{2N+1}(x_N;\Lambda_{2N+1}+\beta/2)\\
\end{matrix}
\right ].
\label{matrix_z}
\end{equation}

Clearly, the Jacobian matrix $\mathbf{J}(\mathbf{z})$ is a continuous mapping, and it is also nonsingular, as demonstrated in \cref{jacobi_nonsingular}. A zero of the function $\mathbf{F}$ where the Jacobian is nonsingular is referred to as a non-degenerate zero.

\begin{theorem}
\label{jacobi_nonsingular}
Let M\"{u}ntz sequence $\Lambda_{2N+1}$ consist solely of real indices and $\beta\in \mathbb{R}$ satisfy $\eqref{lambda_n}$. For any distinct $x_k \in (0,1)$ and $\omega_k > 0$, $k=0,1,\cdots,N$, $\mathbf{J}(\mathbf{z})$ is nonsingular.
\end{theorem}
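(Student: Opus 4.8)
The plan is to reduce the claim to the nonsingularity of $\tilde{\mathbf{J}}(\mathbf{z})$ and then derive a contradiction by counting zeros in the Müntz space. First I would exploit the factorization \cref{matrix_j}: since every $x_k\in(0,1)$ is nonzero, the matrices $\mathrm{diag}(\mathbf{x}^{-\beta/2-1})$ and $\mathrm{diag}(\mathbf{x}^{-\beta/2})$ are invertible, and since every $\omega_k>0$, the matrix $\mathrm{diag}(\bm{\omega})$ is invertible. Hence the two block-diagonal factors on the right of \cref{matrix_j} are nonsingular, and $\mathbf{J}(\mathbf{z})$ is nonsingular if and only if $\tilde{\mathbf{J}}(\mathbf{z})$ is.

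Next, suppose for contradiction that $\tilde{\mathbf{J}}(\mathbf{z})$ is singular. As it is square, it admits a nontrivial left null vector $\mathbf{c}=(c_0,\dots,c_{2N+1})^{\mathrm T}$ with $\mathbf{c}^{\mathrm T}\tilde{\mathbf{J}}(\mathbf{z})=\mathbf{0}$. I would associate with $\mathbf{c}$ the generalized Müntz polynomial
\[
\Phi(x)=\sum_{n=0}^{2N+1} c_n\,L_n\!\left(x;\Lambda_n+\beta/2\right)\in \hat{M}\!\left(\Lambda_{2N+1}+\beta/2\right),
\]
with derivative $\Phi'(x)=\sum_{n} c_n L_n'(x;\Lambda_n+\beta/2)$. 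Reading $\mathbf{c}^{\mathrm T}\tilde{\mathbf{J}}=\mathbf{0}$ column by column, the right block column (the $\mathbf{U},\mathbf{V}$ block) yields $\Phi(x_k)=0$, while the left block column (the $-\tfrac{\beta}{2}\mathbf{U}+\mathbf{U}'\mathrm{diag}(\mathbf{x})$ block) yields $-\tfrac{\beta}{2}\Phi(x_k)+x_k\Phi'(x_k)=0$, for each $k=0,1,\dots,N$. Since $\Phi(x_k)=0$ and $x_k\neq 0$, these force $\Phi(x_k)=\Phi'(x_k)=0$; that is, each of the $N+1$ distinct points $x_k\in(0,1)$ is at least a double zero of $\Phi$.

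Consequently $\Phi$ has at least $2N+2$ zeros in $(0,1)$ counted with multiplicity. On the other hand $\Phi$ lies in the $(2N+2)$-dimensional space $\hat{M}(\Lambda_{2N+1}+\beta/2)$. Because $\Lambda_{2N+1}$, and hence $\Lambda_{2N+1}+\beta/2$, consists of real indices, this space is spanned by functions of the form $x^{\mu},x^{\mu}\log x,\dots$ (cf.\ \cref{repeated_indices}), which form an extended complete Chebyshev (Markov) system on $(0,\infty)$; the Descartes-type zero bound for real Müntz systems then shows that any nonzero element has at most $2N+1$ zeros on $(0,\infty)$ counted with multiplicity. This contradiction forces $\Phi\equiv 0$, and the linear independence of $L_0,\dots,L_{2N+1}$ gives $\mathbf{c}=\mathbf{0}$, so $\tilde{\mathbf{J}}(\mathbf{z})$ and therefore $\mathbf{J}(\mathbf{z})$ are nonsingular.

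I expect the zero-counting step to be the real obstacle rather than the algebra. The Chebyshev property invoked for the existence theorems only controls the number of \emph{simple} zeros of an element of $\hat M$, whereas here the Gauss nodes enter as \emph{double} zeros, so I genuinely need the stronger statement that the real Müntz system is a Markov (ECT) system on $(0,\infty)$, for which the multiplicity-counting bound holds. The delicate subcase is that of repeated exponents, where $\Phi$ is a combination of terms $x^{\mu}\log^{j} x$; I would dispatch it by using \cref{repeated_indices} to pin down the explicit spanning set and then citing (or reproving) the ECT/Descartes bound for such systems, which supplies the ``$\le(\dim-1)$ zeros with multiplicity'' estimate that drives the contradiction.
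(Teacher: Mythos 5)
Your proposal is correct in substance and shares the paper's overall skeleton (reduce nonsingularity of $\mathbf{J}$ to that of $\tilde{\mathbf{J}}$ via the invertible diagonal factors in \cref{matrix_j}, then show a Hermite-type determinant in the M\"untz space is nonzero by counting zeros), but the key zero-counting step is carried out along a genuinely different, dual route. You take a null vector of $\tilde{\mathbf{J}}$, build $\Phi=\sum_n c_n L_n$, observe that each node $x_k$ is a double zero of $\Phi$, and then invoke the extended-Chebyshev (Markov/Descartes) property of real M\"untz systems to cap the number of zeros \emph{counted with multiplicity} in the variable $x$ at $2N+1$. The paper instead transposes the determinant: it fixes the nodes as parameters $z_j=\log x_j$ and counts \emph{simple} zeros, in the exponent variable $y=\lambda$, of a nontrivial combination $w_N(y)=\sum_j\bigl(h_j e^{z_j y}+l_j y e^{z_j(y-1)}\bigr)$; its self-contained \cref{lem:zeros}, proved by a short Rolle-type induction, bounds these by $2N+1$, so the $2N+2$ distinct exponents cannot all be zeros. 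The payoff of the paper's transposition is that it never needs multiplicity counting or the ECT machinery --- an elementary induction suffices --- whereas your route needs the full ``at most $\dim-1$ zeros with multiplicity'' bound for systems $\{x^{\mu}\log^{j}x\}$, which you correctly flag as the real obstacle but leave as a citation rather than a proof; that bound is classical (Karlin--Studden, Borwein--Erd\'elyi), so your argument is complete modulo that reference, and arguably more standard in flavor. Both routes, like the paper, must treat repeated exponents as a separate (limiting) case, and both do so by appeal to \cref{repeated_indices}.
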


The proof of \cref{jacobi_nonsingular} requires \cref{lem:zeros}.

\begin{lemma}
\label{lem:zeros}
For any distinct real numbers $z_0,z_1,\cdots,z_n$, we define
\[w_{n}(y) = \sum_{j=0}^n \left( h_j e^{z_j y} + l_j y e^{z_j(y-1)}  \right ),\quad y\in (-\infty,+\infty). \]
Thus, $w_n(y)$ contains at most $2n+1$ zeros if $\sum_{j=0}^n (h_j^2+l_j^2) > 0$.
\end{lemma}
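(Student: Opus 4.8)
The plan is to recognize $w_n$ as an exponential polynomial whose coefficients are affine in $y$, and to bound its zeros by the classical count $\sum_j(\deg+1)-1$. First I would absorb the factor $e^{-z_j}$ appearing in the second group of terms into the constant $l_j$, writing $w_n(y) = \sum_{j=0}^n (h_j + \tilde l_j y)\, e^{z_j y}$ with $\tilde l_j = l_j e^{-z_j}$; this changes neither the zeros nor the hypothesis $\sum_j(h_j^2+l_j^2)>0$, which is precisely the statement that not all coefficient pairs $(h_j,\tilde l_j)$ vanish, i.e.\ $w_n \not\equiv 0$. Each summand is an affine polynomial times an exponential with a distinct real exponent, so the heuristic count is $2(n+1)-1 = 2n+1$, and I would prove exactly this bound, counting zeros with multiplicity (which dominates the count of distinct zeros).

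The proof proceeds by induction on $n$ using Rolle's theorem. For the base case $n=0$, $w_0 = (h_0 + \tilde l_0 y)e^{z_0 y}$ vanishes only where the affine factor does, giving at most one zero. For the inductive step I would first multiply by the nonvanishing factor $e^{-z_0 y}$ (leaving the zero set and multiplicities unchanged) to normalize the zeroth exponent to $0$; the index-$0$ term then becomes the affine polynomial $h_0 + \tilde l_0 y$, while every other term takes the form $(h_j + \tilde l_j y)e^{c_j y}$ with $c_j = z_j - z_0 \ne 0$ distinct. Differentiating twice annihilates the affine term, and a direct computation shows that $\frac{d^2}{dy^2}\bigl[(h_j+\tilde l_j y)e^{c_j y}\bigr]$ is again an affine polynomial in $y$ times $e^{c_j y}$; hence the second derivative is an exponential polynomial of the same type with only $n$ terms and distinct nonzero exponents $c_1,\dots,c_n$.

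Two applications of Rolle's theorem (tracking multiplicities) then show that if $w_n$ has $Z$ zeros, the twice-differentiated function has at least $Z-2$ zeros. By the induction hypothesis this latter function has at most $2(n-1)+1 = 2n-1$ zeros, provided it is not identically zero, whence $Z \le 2n+1$ as claimed. The main obstacle, and the one point requiring care, is the degenerate case where the twice-differentiated function vanishes identically. Here I would invoke the linear independence of the system $\{e^{c_j y},\, y\, e^{c_j y}\}_{j=1}^n$ for distinct $c_j$ to conclude that each transformed coefficient vanishes; since $c_j \ne 0$ this forces $h_j = \tilde l_j = 0$ for all $j \ge 1$, reducing $w_n$ to a single affine-times-exponential term with at most one zero. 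The hypothesis $\sum_j(h_j^2 + l_j^2)>0$ guarantees $w_n\not\equiv 0$ throughout, so the induction never collapses to the trivial function.
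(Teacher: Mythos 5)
Your proof is correct and follows essentially the same route as the paper's: induction on the number of exponents, multiplying by a nonvanishing exponential to normalize one term, differentiating to drop that term, and using the Rolle-type bound that differentiation loses at most one zero. The only organizational differences are that you normalize $z_0$ and differentiate twice in a single step (cleanly killing the affine term and reducing $n+1$ exponents to $n$ at the cost of two zeros), whereas the paper normalizes $z_n$ and runs two single-differentiation stages via an intermediate claim for the case $l_n=0$; you also treat the degenerate case where the differentiated function vanishes identically more explicitly than the paper does, which is a small but genuine improvement in rigor.
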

\begin{proof}
By induction, when $n=0$, it is obvious that $h_0 e^{z_0 y} + l_0 y e^{z_0 (y-1)}$ contains at most $1$ zero if $h_0$ and $l_0$ are not all equal to zero. Assume that $w_{n-1}(y)$ contains at most $2n-1$ zeros if $\sum_{j=0}^{n-1} (h_j^2+l_j^2) > 0$. 

We claim that $w_{n}(y)$ contains at most $2n$ zeros if $l_n=0$ and $\sum_{j=0}^n ( h_j^2 +  l_j^2) > 0$. Let $F(y) = w_n(y) \exp (-z_n y)$, then 
\[ 
F^\prime(y) = \sum_{j=0}^{n-1} \left[\left(l_j e^{-z_j} + (z_j-z_n)h_j \right )e^{(z_j-z_n)y} + (z_j-z_n)l_j e^{-z_n} y e^{(z_j-z_n)(y-1)} \right ].
\]
By the induction hypothesis, $F^\prime (y)$ contains at most $2n-1$ zeros. Applying the inverse proposition of Rolle's theorem \cite{rudin1976principles}, both $F(y)$ and $w_n(y)$ contain at most $2n$ zeros.

When $l_n\neq0$ and $\sum_{j=0}^n (h_j^2+l_j^2)>0$, 
\[
F^\prime (y) = \sum_{j=0}^n \left[ (l_je^{-z_n} + h_j(z_j-z_n)) e^{(z_j-z_n)y} + l_j(z_j-z_n) e^{-z_n} y e^{(z_j-z_n)(y-1)} \right ] .
\]
Let $\tilde{h}_j = (l_je^{-z_n} + h_j(z_j-z_n))$ and $\tilde{l}_j = l_j(z_j-z_n) e^{-z_n}$. It can be observed that $\tilde{l}_n=0$ and $\sum_{j=0}^n ( \tilde{h}_j^2 + \tilde{l}_j^2)>0$. By the previously established claim, $F^\prime(y)$ contains at most $2n$ zeros, which implies that both $F(y)$ and $w_n(y)$ contain at most $2n+1$ zeros.
\end{proof}

\begin{proof}[Proof of \cref{jacobi_nonsingular}]
Without loss of generality, it suffices to consider the case when $\beta=0$. 
We begin our proof by assuming that all the elements in $\Lambda_{2N+1}$ are distinct. 
Let $\varphi_j(x) = x^{\lambda_j}$ for $j=0,1,\cdots,2N+1$.  It suffices to prove that the determinant $\operatorname{det} (\mathbf{Q})$ is non-zero, where
\begin{equation}
\mathbf{Q} = \left[
\begin{matrix}
\varphi_0\left(x_0\right) & \varphi_0^{\prime}\left(x_0\right) &  \cdots & \varphi_0\left(x_{n-1}\right) & \varphi_0^{\prime}\left(x_{n-1}\right) \\
\varphi_1\left(x_0\right) & \varphi_1^{\prime}\left(x_0\right) &  \cdots & \varphi_1\left(x_{n-1}\right) & \varphi_1^{\prime}\left(x_{n-1}\right) \\
\vdots & \vdots &  & \vdots & \vdots \\
\varphi_{2 n-1}\left(x_0\right) & \varphi_{2 n-1}^{\prime}\left(x_0\right) &  \cdots & \varphi_{2 n-1}\left(x_{n-1}\right) & \varphi_{2 n-1}^{\prime}\left(x_{n-1}\right)
\end{matrix}
\right].
\label{hermite_system}
\end{equation}

By \cref{lem:zeros}, for any distinct real numbers $z_0,z_1,\cdots,z_N$ and $y_0,y_1,\cdots,y_N$, the determinant of the matrix
\[
\left [
\begin{matrix}
e^{z_0 y_0} & y_0 e^{z_0 (y_0-1)} & \cdots & e^{z_N y_0} & y_0 e^{z_N(y_0-1)} \\
e^{z_0 y_1} & y_1 e^{z_0 (y_1-1)} & \cdots & e^{z_N y_1}  &  y_1 e^{z_N(y_1-1)} \\
\vdots & \vdots &  & \vdots & \vdots \\
e^{z_0 y_{2N+1}} & y_{2N+1} e^{z_0 (y_{2N+1}-1)} & \cdots & e^{z_N y_{2N+1}} & y_{2N+1} e^{z_N (y_{2N+1}-1)} 
\end{matrix}
\right ]
\]
does not vanish. By taking $z_j = \log x_j$ and $y_j = \lambda_j$, we  conclude that $\operatorname{det} (\mathbf{Q})\neq 0$.

When there are repeated indices in $\Lambda_{2N+1}$, the conclusion still holds due to the properties of determinant \cite{trefethen2022numerical} and \cref{repeated_indices}.
\end{proof}

To compute the matrix $\tilde{\mathbf{J}}$,  it is necessary to first calculate the matrix $\mathbf{U}$ and $\mathbf{V}$ using \cref{alg:muntz_computing}. These matrices are then combined with the recurrence relation
\begin{equation}
\begin{aligned}
xL_n^\prime (x;\Lambda_n+\beta/2) = xL_{n-1}^\prime (x;\Lambda_n& +\beta/2)  +
\left (\lambda_n+\frac{\beta}{2} \right ) L_n(x;\Lambda+\beta/2) \\ & + \left (1+\lambda_{n-1}  + \frac{\beta}{2} \right ) L_{n-1}(x;\Lambda_{n-1}+\beta/2),
\end{aligned}
\label{muntz_jacobi_recurrence}
\end{equation}
which can be directly obtained from \cref{muntz_recursive1}, to obtain both  $\mathbf{U}^\prime \mathrm{diag}(\mathbf{x})$ and $\mathbf{V}^\prime \mathrm{diag}(\mathbf{x})$ in a stable and accurate manner.
This process enables an efficient and reliable numerical computaition of $\tilde{\mathbf{J}}$.

We employ Newton's method to determine the zero of $\mathbf{F}$. Let $\mathbf{z}^{(0)}$ denote the initial guess, and $\mathbf{z}^{(k)}$ represent the $k$th iteration point. 
At the $k$th iteration, it is essential to solve the Newton equation
\begin{equation}
\mathbf{J}(\mathbf{z}^{(k)}) \mathbf{p}^{(k)} = -\mathbf{F}(\mathbf{z}^{(k)}).
\label{newton_equation}
\end{equation}
We denote $\tilde{\mathbf{p}}^{(k)}$ as
\begin{equation}
\tilde{\mathbf{p}}^{(k)} = 
\begin{bmatrix}
\mathrm{diag}(\mathbf{x}^{-\frac{\beta}{2}-1}) & \\
& \mathrm{diag}(\mathbf{x}^{-\frac{\beta}{2}})
\end{bmatrix}
\begin{bmatrix}
\mathrm{diag}(\bm{\omega}) & \\
 & \mathrm{diag}(\mathbf{1}) 
\end{bmatrix}
\mathbf{p}^{(k)} .
\end{equation}
Then $\tilde{\mathbf{J}}(\mathbf{z}^{(k)}) \tilde{\mathbf{p}}^{(k)} = \mathbf{J}(\mathbf{z}^{(k)}) \mathbf{p}^{(k)}$.
The Newton equation is transformed into
\begin{equation}
\tilde{\mathbf{J}}(\mathbf{z}^{(k)}) \tilde{\mathbf{p}}^{(k)} = -\mathbf{F}(\mathbf{z}^{(k)}).
\label{modified_newton_equation}
\end{equation}

In numerical computations, we solve \cref{modified_newton_equation} to obtain $\tilde{\mathbf{p}}^{(k)}$ instead of $\mathbf{p}^{(k)}$ as in \cref{newton_equation}. The reason for this modification is that when $x$ is close to 0 --- commonly encountered when $\hat{M}(\Lambda_{2N+1})$ consists of endpoint singular functions --- the computation of $\mathbf{J}$ can result in a loss of significant digits, which can cause the Newton iteration to converge slowly or even diverge.

Let $\tilde{\mathbf{p}}^{(k)}_1$ and $\mathbf{p}^{(k)}_1$ denote the first $N+1$ elements of $\tilde{\mathbf{p}}^{(k)}$ and $\mathbf{p}^{(k)}$ respectively.
Similarly, let $\tilde{\mathbf{p}}^{(k)}_2$ and $\mathbf{p}^{(k)}_2$ denote their last $N+1$ elements, respectively.
 Thus, we have
\begin{equation}
 \mathbf{p}^{(k)}_1 = \mathrm{diag}(\mathbf{x}^{(k)})^{\frac{\beta}{2}+1} \mathrm{diag}(\bm{\omega}^{(k)})^{-1} \tilde{\mathbf{p}}^{(k)}_1,\quad \mathbf{p}^{(k)}_2 = \mathrm{diag}(\mathbf{x}^{(k)})^{\frac{\beta}{2}} \tilde{\mathbf{p}}^{(k)}_2.
\end{equation}
From the definition of $\mathbf{z}$, one can see that $\mathbf{p}^{(k)}_1$ and $\mathbf{p}^{(k)}_2$ correspond to the Newton descent directions of $\mathbf{x}^{(k)}$ and $\bm{\omega}^{(k)}$, respectively. Update the Gaussian nodes and weights at $k$th iteration by
\begin{equation}
\bm{\omega}^{(k+1)} \leftarrow \bm{\omega}^{(k)} + \mathbf{p}^{(k)}_2,\quad
\mathbf{x}^{(k+1)} \leftarrow \mathbf{x}^{(k)} + \mathbf{p}^{(k)}_1. 
\label{newton_update}
\end{equation}

We can prove that our method for this modification is convergent locally. The classic theorem on the Newton's method can be summarized as follows.
\begin{theorem}
[\cite{jorge2006}]
\label{newton_convergence}
Let $\mathbf{F}:\mathbb{R}^m \rightarrow \mathbb{R}^m$ be a differentiable mapping with a non-degenerate zero $\mathbf{z}^*$ such that $\mathbf{F}(\mathbf{z}^*) = \mathbf{0}$ and $\mathrm{det} \left(\mathbf{J}(\mathbf{z}^*) \right) \neq 0$, where $\mathbf{J}(\mathbf{z})$ denotes the Jacobian matrix of $\mathbf{F}$ evaluated at $\mathbf{z}$. Then, there exists $\varepsilon>0$ such that for any initial guess $\mathbf{z}^{(0)}$ satisfying $\| \mathbf{z}^{(0)} - \mathbf{z}^* \| < \varepsilon $, the iterates generated by the update rule
\[
\mathbf{z}^{(k+1)} = \mathbf{z}^{(k)} - \mathbf{J}(\mathbf{z}^{(k)})^{-1} \mathbf{F}(\mathbf{z}^{(k)}),\quad k=0,1,2,\cdots.
\]
satisfy $\mathbf{z}^{(k+1)}-\mathbf{z}^* = o(\|\mathbf{z}^{(k)}-\mathbf{z}^*\|)$. Moreover, if $\mathbf{J}(\mathbf{z})$ is Lipschitz continuous in a neighborhood of $\mathbf{z}^*$ with radius $\varepsilon$, then $\mathbf{z}^{(k+1)}-\mathbf{z}^* = O(\|\mathbf{z}^{(k)}-\mathbf{z}^*\|^2)$.
\end{theorem}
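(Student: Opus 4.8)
The plan is to run the classical local-convergence argument for Newton's method, organized around a single error recursion. First I would exploit the non-degeneracy hypothesis together with continuity of the Jacobian: since $\det(\mathbf{J}(\mathbf{z}^*)) \neq 0$ and $\mathbf{z}\mapsto\det(\mathbf{J}(\mathbf{z}))$ is continuous, there is a closed ball $\overline{B}(\mathbf{z}^*,\rho)$ on which $\mathbf{J}(\mathbf{z})$ is invertible and $\|\mathbf{J}(\mathbf{z})^{-1}\|$ is bounded by a constant $\gamma$. This is exactly what makes each Newton step well defined as long as the current iterate lies in the ball.

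The core step is to rewrite the error. Subtracting $\mathbf{z}^*$ from the update rule and inserting $\mathbf{F}(\mathbf{z}^*)=\mathbf{0}$ gives
\[
\mathbf{z}^{(k+1)} - \mathbf{z}^* = -\,\mathbf{J}(\mathbf{z}^{(k)})^{-1}\Bigl[\bigl(\mathbf{F}(\mathbf{z}^{(k)}) - \mathbf{F}(\mathbf{z}^*)\bigr) - \mathbf{J}(\mathbf{z}^{(k)})(\mathbf{z}^{(k)} - \mathbf{z}^*)\Bigr].
\]
The bracketed quantity is the first-order Taylor remainder of $\mathbf{F}$, which by the integral form of the remainder equals
\[
\int_0^1\bigl[\mathbf{J}(\mathbf{z}^* + t(\mathbf{z}^{(k)} - \mathbf{z}^*)) - \mathbf{J}(\mathbf{z}^{(k)})\bigr](\mathbf{z}^{(k)} - \mathbf{z}^*)\,\mathrm{d}t.
\]
Uniform continuity of $\mathbf{J}$ on $\overline{B}(\mathbf{z}^*,\rho)$ shows the integrand is $o(\|\mathbf{z}^{(k)} - \mathbf{z}^*\|)$, and multiplying by the bounded factor $\gamma$ yields the superlinear estimate $\mathbf{z}^{(k+1)} - \mathbf{z}^* = o(\|\mathbf{z}^{(k)} - \mathbf{z}^*\|)$.

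Next I would close the induction that traps the orbit inside the ball. Because the estimate is $o(\cdot)$, I can shrink $\rho$ to some $\varepsilon\le\rho$ so that $\|\mathbf{z}^{(k+1)} - \mathbf{z}^*\| \le \tfrac12\|\mathbf{z}^{(k)} - \mathbf{z}^*\|$ whenever $\|\mathbf{z}^{(k)} - \mathbf{z}^*\| < \varepsilon$; then any start with $\|\mathbf{z}^{(0)} - \mathbf{z}^*\| < \varepsilon$ produces iterates that never leave $B(\mathbf{z}^*,\varepsilon)$, so $\mathbf{J}(\mathbf{z}^{(k)})^{-1}$ is defined and both estimates apply at every step. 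For the Lipschitz refinement I would keep the same integral remainder but, using the $L$-Lipschitz bound and the fact that the two arguments of $\mathbf{J}$ differ by $(1-t)(\mathbf{z}^* - \mathbf{z}^{(k)})$, bound the integrand by $L(1-t)\|\mathbf{z}^{(k)} - \mathbf{z}^*\|^2$; integrating gives $\tfrac{L}{2}\|\mathbf{z}^{(k)} - \mathbf{z}^*\|^2$, and the $\gamma$ bound then yields $\|\mathbf{z}^{(k+1)} - \mathbf{z}^*\| \le \tfrac{\gamma L}{2}\|\mathbf{z}^{(k)} - \mathbf{z}^*\|^2 = O(\|\mathbf{z}^{(k)} - \mathbf{z}^*\|^2)$.

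I expect the only genuinely delicate point to be the self-improving bookkeeping of the third step: guaranteeing that the iterates never escape the ball on which both the inverse bound and the remainder estimate are valid, since those estimates only hold there. A secondary subtlety is that the clean integral remainder with its uniform $o(\cdot)$ bound really uses continuity of $\mathbf{J}$, i.e.\ $C^1$ regularity, which is slightly more than the bare differentiability stated; under the Lipschitz hypothesis this regularity is automatic, and continuity of $\mathbf{J}$ is in any case already needed for the invertibility neighborhood in the first step.
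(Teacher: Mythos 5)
The paper does not prove this theorem; it is quoted as a classical result from the cited reference, so there is no internal proof to compare against. Your argument is the standard local-convergence proof of Newton's method (error recursion via the integral form of the Taylor remainder, inverse-Jacobian bound on a ball, contraction to trap the iterates, Lipschitz bound for the quadratic rate) and it is correct; the one caveat --- that the uniform $o(\cdot)$ remainder estimate needs continuity of $\mathbf{J}$, i.e.\ $C^1$ rather than bare differentiability as literally stated --- is real, and you have already flagged it accurately yourself.
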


As all the Gaussian nodes lie in the interior of the integration interval and all the Gaussian weights are positive,  it follows that there exists some $\varepsilon > 0$ such that the Jacobian $\mathbf{J}(\mathbf{z})$ is Lipschitz continuous in a neighborhood of $\mathbf{z}^*$ with radius $\varepsilon$. Thus the following result holds.

\begin{lemma}
Provided that the starting values are sufficiently good, the presented
form of the Newton's method \cref{newton_update} is convergent quadratically.
\end{lemma}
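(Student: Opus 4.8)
The plan is to show that the modified iteration \cref{newton_update} coincides, in exact arithmetic, with the classical Newton iteration analyzed in \cref{newton_convergence}, and then to verify that the hypotheses of that theorem hold at the Gaussian solution $\mathbf{z}^*$.

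First I would establish the algebraic equivalence. By the factorization \cref{matrix_j} we write $\mathbf{J}(\mathbf{z}) = \tilde{\mathbf{J}}(\mathbf{z})\,\mathbf{D}(\mathbf{z})$, where $\mathbf{D}(\mathbf{z})$ denotes the product of the two block-diagonal matrices appearing there. In a neighborhood of $\mathbf{z}^*$ every node satisfies $x_k \in (0,1)$ and every weight $\omega_k > 0$, so $\mathbf{D}(\mathbf{z})$ is an invertible diagonal matrix. Since $\tilde{\mathbf{p}}^{(k)} = \mathbf{D}(\mathbf{z}^{(k)})\,\mathbf{p}^{(k)}$ by construction, the solution $\tilde{\mathbf{p}}^{(k)}$ of the transformed system \cref{modified_newton_equation} yields, after the recovery step, a direction $\mathbf{p}^{(k)} = \mathbf{D}(\mathbf{z}^{(k)})^{-1}\tilde{\mathbf{p}}^{(k)}$ satisfying $\mathbf{J}(\mathbf{z}^{(k)})\,\mathbf{p}^{(k)} = \tilde{\mathbf{J}}(\mathbf{z}^{(k)})\,\tilde{\mathbf{p}}^{(k)} = -\mathbf{F}(\mathbf{z}^{(k)})$. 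Hence $\mathbf{p}^{(k)}$ is exactly the Newton direction, and \cref{newton_update} is identical to $\mathbf{z}^{(k+1)} = \mathbf{z}^{(k)} - \mathbf{J}(\mathbf{z}^{(k)})^{-1}\mathbf{F}(\mathbf{z}^{(k)})$.

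Next I would check the three hypotheses of \cref{newton_convergence}. Differentiability of $\mathbf{F}$ is immediate from \cref{map_F}: the entries of $\bm{\Psi}$ are M\"{u}ntz-Legendre polynomials, smooth on $(0,1)$, and the factor $\mathrm{diag}(\mathbf{x}^{-\beta/2})$ is smooth for positive nodes. Non-degeneracy of $\mathbf{z}^*$ has two parts. That $\mathbf{F}(\mathbf{z}^*) = \mathbf{0}$ holds because $\mathbf{z}^*$ encodes the Gaussian nodes and weights, whose existence and interiority, namely $x_k^* \in (0,1)$ and $\omega_k^* > 0$, are guaranteed by \cref{generalized_gauss_quadrature1,generalized_gauss_quadrature2}. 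That $\det(\mathbf{J}(\mathbf{z}^*)) \neq 0$ is precisely \cref{jacobi_nonsingular}, which applies since the entries of $\mathbf{z}^*$ are distinct interior nodes with positive weights. For the Lipschitz condition, the entries of $\mathbf{J}$ are assembled from $L_n$, $\partial_x L_n$ and the powers $x^{-\beta/2}$ and $x^{-\beta/2-1}$, all real-analytic on the positive axis; on a closed neighborhood of $\mathbf{z}^*$ bounded away from $x=0$ each entry is $C^1$ with bounded derivative, hence Lipschitz, as already observed in the paragraph preceding the statement.

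With these ingredients in place, \cref{newton_convergence} applies and gives $\mathbf{z}^{(k+1)} - \mathbf{z}^* = O(\|\mathbf{z}^{(k)} - \mathbf{z}^*\|^2)$ for every sufficiently accurate starting value, which is the asserted quadratic convergence. I expect the only genuinely delicate point to be the algebraic equivalence: one must ensure that $\mathbf{D}(\mathbf{z})$ stays invertible along the iteration, that is, that $x_k^{(k)} > 0$ and $\omega_k^{(k)} \neq 0$ persist, which is automatic in a sufficiently small neighborhood of $\mathbf{z}^*$, so that the step recovered from \cref{modified_newton_equation} is the exact Newton step and not merely a perturbation. Everything beyond that is routine smoothness bookkeeping.
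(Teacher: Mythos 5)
Your proof is correct and follows essentially the same route as the paper: reduce the modified update \cref{newton_update} to the classical Newton iteration of \cref{newton_convergence}, invoke \cref{jacobi_nonsingular} for nonsingularity at $\mathbf{z}^*$, and obtain Lipschitz continuity of the Jacobian from the interiority of the nodes and positivity of the weights. You actually supply more detail than the paper does, in particular the explicit verification that solving \cref{modified_newton_equation} and rescaling recovers exactly the Newton direction, which the paper only asserts via the identity $\tilde{\mathbf{J}}(\mathbf{z}^{(k)})\tilde{\mathbf{p}}^{(k)} = \mathbf{J}(\mathbf{z}^{(k)})\mathbf{p}^{(k)}$.
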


In practical implementation, Newton's method usually converges after only a few iterations. However, in cases where slow convergence occurs, we use a damped Newton's method to dampen the step size as $s^{(k)} = \gamma^{\max(0,k-k_0)}$, where $0<\gamma < 1$ and $k_0 \in \mathbb{N}$.

Additionally, in terms of the local convergence property of the Newton's method, it is imperative to select an optimal initial guess. However, obtaining a suitable initial point can be a challenging endeavor in various scenarios. As a result, we resort to the Continuation method as a means of obtaining the initial guess.

Let $\alpha \in [0,1]$.
Set
$$
\Lambda_{2N+1}^{(\alpha)} = \left \{\lambda_0^{(\alpha)}, \lambda_1^{(\alpha)},\cdots,\lambda_{2N+1}^{(\alpha)}\right \},
$$
where
\begin{equation}\lambda_n^{(\alpha)} = \alpha \lambda_n + (1-\alpha)n,\quad 
n = 0,1,\cdots, 2N+1.
\label{temp41} \end{equation}
Then $L_n^\beta(x;\Lambda_n^{(\alpha)})\in \hat{M}\left(\Lambda_{2N+1}^{(\alpha)}\right )$ are the orthogonal M\"{u}ntz polynomials with respect to weight function $\omega(x)$. Again, by applying the \cref{generalized_gauss_quadrature1} and \cref{generalized_gauss_quadrature2},
there exists the unique Gaussian nodes $x_k(\alpha)\in (0,1)$ and weights $\omega_k(\alpha)>0$, $k=0,1,\cdots,N$, such that
\begin{equation}
\sum_{k=0}^N L_n^\beta (x_k(\alpha);\Lambda_n^{(\alpha)}) \omega_k(\alpha) = m_n(\alpha),\quad n=0,1,\cdots,2N+1,
\label{temp70}
\end{equation}
where
$m_n(\alpha) = \int_0^1 L_n^\beta ( x;\Lambda_n^{(\alpha)}) \omega(x) \mathrm{d}x$. Using the Newton's method outlined above with an appropriate initial guess, we are able to compute Gaussian nodes $x_k(\alpha)$ and weights $\omega_k(\alpha)$, moreover, both of which are continuous with respect to $\alpha$.
\begin{theorem} \label{continuity}
The M\"{u}ntz sequence $\Lambda_{2N+1}$ and $\beta\in\mathbb{R}$ are defined in $\eqref{lambda_n}$. For any $\alpha \in [0,1]$, we define $\Lambda_{2N+1}^{(\alpha)}$ according to $\eqref{temp41}$. The Gaussian nodes $x_k(\alpha)$ and weights $\omega_k(\alpha)$ satisfy $\eqref{temp70}$ and exhibit continuity with respect to $\alpha$.
\end{theorem}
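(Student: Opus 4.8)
The plan is to realize the pair $(x_k(\alpha),\omega_k(\alpha))$ as the unique zero of a parameterized version of the map $\mathbf{F}$ from \cref{map_F}, and then to extract continuity from the implicit function theorem, with \cref{jacobi_nonsingular} supplying the nonsingular Jacobian along the path. Concretely, I would introduce $\mathbf{F}:\mathbb{R}^{2N+2}\times[0,1]\to\mathbb{R}^{2N+2}$,
\[
\mathbf{F}(\mathbf{z},\alpha)=\bm{\Psi}^{(\alpha)}\,\mathrm{diag}(\mathbf{x}^{-\beta/2})\,\bm{\omega}-\mathbf{m}(\alpha),
\]
where $\bm{\Psi}^{(\alpha)}$ and $\mathbf{m}(\alpha)$ are assembled exactly as in \cref{sec:quadrature}, but from the exponents $\lambda_n^{(\alpha)}$ of \cref{temp41}. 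For each fixed $\alpha$, the system $\mathbf{F}(\mathbf{z},\alpha)=\mathbf{0}$ is precisely \cref{temp70}, whose unique feasible solution $\mathbf{z}^*(\alpha)=(\mathbf{x}(\alpha),\bm{\omega}(\alpha))$, with distinct nodes in $(0,1)$ and positive weights, is guaranteed by \cref{generalized_gauss_quadrature1,generalized_gauss_quadrature2}.

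Before invoking those results I would verify that the homotopy stays admissible. Since $\lambda_n^{(\alpha)}=\alpha\lambda_n+(1-\alpha)n$ is a convex combination of real numbers, every $\Lambda_{2N+1}^{(\alpha)}$ is real; and because $\lambda_n^{(\alpha)}+\beta/2$ is affine in $\alpha$, its infimum over $\alpha\in[0,1]$ is attained at an endpoint, where $\lambda_n+\beta/2>-1/2$ (the hypothesis \cref{lambda_n}) and $n+\beta/2\ge\beta/2>-1/2$ (well-posedness of the $\alpha=0$ Gauss--Jacobi endpoint, i.e. integrability of $x^\beta$) respectively hold. Hence \cref{lambda_n} holds for every $\alpha$, so both the Chebyshev-system conclusions and \cref{jacobi_nonsingular} are available along the whole path.

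Next I would establish the regularity of $\mathbf{F}$. The map is affine in $\bm{\omega}$ and smooth in $\mathbf{x}$ away from $x=0$, so the only delicate point is the dependence on $\alpha$ through the exponents. Here I would use the contour representation \cref{muntz_legendre}: fixing one contour $\Gamma$ that encircles all the $\lambda_k^{(\alpha)}$ uniformly for $\alpha\in[0,1]$ (possible since these exponents lie in a fixed bounded interval), the integrand depends analytically on the parameters $\lambda_k^{(\alpha)}$, which depend linearly on $\alpha$. Differentiating under the integral sign then shows that $L_n(x;\Lambda_n^{(\alpha)}+\beta/2)$ and its $x$-derivative, hence $\bm{\Psi}^{(\alpha)}$ and $\tilde{\mathbf{J}}$, are jointly continuous (indeed $C^1$) in $(\mathbf{x},\alpha)$ on the region where $x_k>0$; the moments $\mathbf{m}(\alpha)$ inherit the same regularity, being rational functions of the exponents through the recurrence \cref{moments_recurrence} (whose denominators $1+\lambda_n^{(\alpha)}+\beta$ stay positive by \cref{lambda_n}).

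Finally I would conclude by the implicit function theorem. Fix $\alpha_0\in[0,1]$. At $(\mathbf{z}^*(\alpha_0),\alpha_0)$ the partial Jacobian $\partial_{\mathbf{z}}\mathbf{F}=\mathbf{J}(\mathbf{z}^*(\alpha_0))$ is nonsingular by \cref{jacobi_nonsingular}, so there is a $C^1$ local branch $\alpha\mapsto\zeta(\alpha)$ with $\zeta(\alpha_0)=\mathbf{z}^*(\alpha_0)$ and $\mathbf{F}(\zeta(\alpha),\alpha)=\mathbf{0}$. Since $\mathbf{z}^*(\alpha_0)$ lies in the open feasible set (distinct nodes in $(0,1)$, positive weights), continuity of $\zeta$ keeps $\zeta(\alpha)$ feasible for $\alpha$ near $\alpha_0$; but for each such $\alpha$ the feasible solution of $\mathbf{F}(\cdot,\alpha)=\mathbf{0}$ is unique, so $\zeta(\alpha)=\mathbf{z}^*(\alpha)$. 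Thus $\mathbf{z}^*$ coincides with a $C^1$ branch near every $\alpha_0$, which proves continuity (in fact continuous differentiability) of $x_k(\alpha)$ and $\omega_k(\alpha)$ on $[0,1]$. I expect the main obstacle to be the regularity step, namely rigorously justifying differentiation under the contour integral with a uniformly valid choice of $\Gamma$; everything else reduces to combining \cref{jacobi_nonsingular,generalized_gauss_quadrature1,generalized_gauss_quadrature2} with the standard implicit function theorem. The one conceptual subtlety, that a continuation branch might silently leave the feasible region, is dispatched cleanly by the pointwise-in-$\alpha$ uniqueness of the Gauss rule rather than by a global compactness argument.
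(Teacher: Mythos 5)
Your proposal is correct and follows essentially the same route as the paper: both realize $(\mathbf{x}(\alpha),\bm{\omega}(\alpha))$ as the zero of a parameterized map $\mathbf{F}(\mathbf{z},\alpha)$ and extract continuity from the nonsingularity of $\partial_{\mathbf{z}}\mathbf{F}$ supplied by \cref{jacobi_nonsingular}. If anything, your version is tighter, since the paper differentiates the identity $\mathbf{F}(\mathbf{z}(\alpha),\alpha)=\mathbf{0}$ in $\alpha$ as if $\partial_\alpha\mathbf{z}$ were already known to exist and then reads off its existence from the invertible Jacobian, whereas you invoke the implicit function theorem properly, check admissibility of the homotopy, and add the needed identification of the local branch with the unique Gauss rule via pointwise uniqueness.
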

\begin{proof}
Let 
$\mathbf{z}(\alpha) = [x_0{(\alpha)},\cdots,x_N{(\alpha)},\omega_0(\alpha),\cdots,\omega_N(\alpha)]^\mathrm{T}
$. Our goal is to show that $\mathbf{z}(\alpha)$, or its every component, is continuous.
Based on $\eqref{map_F}$, we can similarly establish a mapping $\mathbf{F}(\mathbf{z},\alpha)$ associated with the M\"{u}ntz sequence $\Lambda_{2N+1}^{(\alpha)}$ such that
\begin{equation}
    \mathbf{F}(\mathbf{z}(\alpha),\alpha) = \mathbf{0}.
    \label{temp71}
\end{equation}
As $L_n^{\beta}(x;\Lambda_n^{(\alpha)})$ is differentiable with respect to $\alpha$, it follows that $\mathbf{F}(\mathbf{z},\alpha)$ is differentiable with respect to $\alpha$, i.e., $\partial_\alpha \mathbf{F}$ exists.
Taking the derivative of $\eqref{temp71}$ with respect to $\alpha$, we obtain
\[\frac{\partial \mathbf{F}}{\partial \alpha} + \frac{\partial \mathbf{F}}{\partial \mathbf{z}} \frac{\partial \mathbf{z}}{\partial \alpha}  = \mathbf{0},
\]
where $\partial_\alpha \mathbf{F}$ and $\partial_\alpha \mathbf{z}$ are $2N+2$ dimensional column vectors. ${\partial_{\mathbf{z}} \mathbf{F}}$ is a $2N+2$ order matrix, which is the Jacobian matrix in $\eqref{matrix_j}$. Therefore, $\partial_\mathbf{z} \mathbf{F}$ is nonsingular, implying that $\partial_\alpha \mathbf{z}$ exists and hence $\mathbf{z}(\alpha)$ is continuous.
\end{proof}
\begin{remark}
If $\alpha=0$, Gaussian nodes $\mathbf{x}(\alpha)$ and weights $\bm{\omega}(\alpha)$ reduce to the classic Gauss-Jacobi nodes and weights, which can be obtained by various well-known methods in \cite{golub1969calculation,glaser2007fast}. When $\alpha$ equals $1$, $\mathbf{x}(\alpha)$ and $\bm{\omega}(\alpha)$ are the desired ones.
\end{remark}

To obtain the Gaussian nodes and weights corresponding to a specific M\"{u}ntz sequence, we first compute the initial iterate $\mathbf{z}(0)$. We then incrementally increase $\alpha$ by a small step size $\Delta \alpha$ and use Newton's method with the previous iterate $\mathbf{z}(\alpha-\Delta \alpha)$ as the initial guess to compute $\mathbf{z}(\alpha)$. By repeating this process with sufficiently small $\Delta \alpha$, the continuity of $\mathbf{z}(\alpha)$ ensures that the desired Gaussian nodes and weights can be obtained when $\alpha=1$. This process is summarized in the following \cref{alg:gauss_quadrature}.

\begin{algorithm}
\caption{Compute Gauss quadrature}
\label{alg:gauss_quadrature}
\begin{algorithmic}
\REQUIRE{$\Lambda_{2N+1}$ and $\beta\in \mathbb{R}$ satisfy \eqref{lambda_n}.}
\STATE{Compute $\mathbf{z}(0)$ and set $\alpha\leftarrow \Delta \alpha$.}
\WHILE{$\alpha \leq 1 $}
\STATE{Compute moments $\mathbf{m}(\alpha)$ by \cref{moments_recurrence}.}
\STATE{Determine the initial guess in Newton's method $\mathbf{z}^{(0)} \leftarrow \mathbf{z}(\alpha-\Delta \alpha)$.}
\FOR{$k=1,2,\cdots$}
\STATE{Compute $\tilde{\mathbf{J}}(\mathbf{z}^{(k)})$ and $\mathbf{F}(\mathbf{z}^{(k)})$ by \cref{alg:muntz_computing} and $\cref{muntz_jacobi_recurrence}$.}
\STATE{Compute the descent directions $\tilde{\mathbf{p}}^{(k)}_1$ and $\tilde{\mathbf{p}}^{(k)}_1$ by $\eqref{modified_newton_equation}$.}
\STATE{Update weights: $\bm{\omega}^{(k+1)} \leftarrow \bm{\omega}^{(k)} + s^{(k)} 
 \mathrm{diag}(\mathbf{x}^{(k)})^{\frac{\beta}{2}} \tilde{\mathbf{p}}^{(k)}_2$.}
\STATE{Update nodes: $\mathbf{x}^{(k+1)} \leftarrow \mathbf{x}^{(k)} + s^{(k)} \mathrm{diag}(\mathbf{x}^{(k)})^{\frac{\beta}{2}+1} \mathrm{diag}(\bm{\omega}^{(k)})^{-1} \tilde{\mathbf{p}}^{(k)}_1$.}
\STATE{$\mathbf{z}^* \leftarrow (\mathbf{x}^{(k+1)}, \bm{\omega}^{(k+1)})$.}
\ENDFOR
\STATE{Obtain $\mathbf{z}(\alpha) \leftarrow \mathbf{z}^*$.}
\STATE{Update $\alpha \leftarrow \alpha + \Delta \alpha$.}
\ENDWHILE
\RETURN $\mathbf{z}(1)$.
\end{algorithmic}
\end{algorithm}

\begin{remark}
In \cref{alg:gauss_quadrature}, an unordered M\"{u}ntz sequence is allowed. Moreover, the continuation step size $\Delta \alpha$ is not necessarily the same at each iteration.
\end{remark}

Specifically, Gauss quadrature with $\omega(x) = 1$ can be obtained via a transformation of that with $\omega(x) = x^\beta$.

\begin{theorem}\label{temp47}
Let $\{x_j,\omega_j \}_{j=0}^N$ be the Gaussian nodes and weights with weight function $\omega(x) = x^{\beta}$ that is exact for functions in $\hat{M}(\Lambda_{2N+1})$. Let $\kappa = 1/(\beta+1)$, 
\[
\tau_{j} = x_j^{\frac{1}{\kappa}}, \quad \chi_{j} = \frac{\omega_j}{\kappa}, \quad j=0,1,\cdots,N.
\]
Then $\{\tau_j,\chi_j\}_{j=0}^N$ are the Gaussian nodes and weights with weight function $\omega(x)=1$ over $\hat{M}\left( \kappa \Lambda_{2N+1} \right)$, i.e.,
\begin{equation}
\int_0^1 p(x) \mathrm{d} x =
\sum_{j=0}^N {p(\tau_j)} \chi_j,\quad \forall p(x)\in \hat{M}\left( \kappa \Lambda_{2N+1} \right).
\end{equation}
\end{theorem}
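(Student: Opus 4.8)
The plan is to establish the identity by a single change of variables that turns the unweighted integral over $\hat{M}(\kappa\Lambda_{2N+1})$ into the $x^\beta$-weighted integral over $\hat{M}(\Lambda_{2N+1})$, on which the quadrature $\{x_j,\omega_j\}$ is assumed exact. Since $\omega(x)=x^\beta$ is integrable on $[0,1]$ we have $\beta>-1$, so $\kappa=1/(\beta+1)>0$ and, crucially, $1/\kappa-1=\beta$. Because both sides of the target identity are linear in $p$, it suffices to treat an arbitrary $p\in\hat{M}(\kappa\Lambda_{2N+1})$, and the substitution will handle all of them at once.

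First I would fix $p\in\hat{M}(\kappa\Lambda_{2N+1})$ and substitute $x=u^{1/\kappa}$ (equivalently $u=x^\kappa$), a bijection of $[0,1]$ onto itself with $\mathrm{d}x=\kappa^{-1}u^{\beta}\,\mathrm{d}u$. Writing $q(u):=p(u^{1/\kappa})$, this gives
\[
\int_0^1 p(x)\,\mathrm{d}x=\frac{1}{\kappa}\int_0^1 q(u)\,u^{\beta}\,\mathrm{d}u.
\]
The key step is to verify that $q\in\hat{M}(\Lambda_{2N+1})$. On a pure power $x^{\kappa\lambda_n}$ the substitution returns $u^{\lambda_n}$, while on a logarithmic element $x^{\kappa\lambda_n}\log^{k}x$ (which arises precisely when $\kappa\Lambda_{2N+1}$ has repeated indices) it returns $\kappa^{-k}u^{\lambda_n}\log^{k}u$, since $\log x=\kappa^{-1}\log u$. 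By \cref{repeated_indices} both of these lie in $\hat{M}(\Lambda_{2N+1})$, so $q$ does as well, and in fact $p\mapsto q$ is a linear isomorphism of $\hat{M}(\kappa\Lambda_{2N+1})$ onto $\hat{M}(\Lambda_{2N+1})$.

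Next I would apply the assumed exactness of $\{x_j,\omega_j\}$ with weight $u^\beta$ to the function $q$, obtaining $\int_0^1 q(u)u^\beta\,\mathrm{d}u=\sum_{j=0}^N q(x_j)\omega_j$. Reading off $q(x_j)=p(x_j^{1/\kappa})=p(\tau_j)$ and $\omega_j/\kappa=\chi_j$ collapses the right-hand side to $\sum_{j=0}^N p(\tau_j)\chi_j$, which is exactly the claimed identity. To conclude that these are genuinely the Gauss data, I would note that $\tau_j=x_j^{1/\kappa}\in(0,1)$ are distinct (as $u\mapsto u^{1/\kappa}$ is strictly increasing) and $\chi_j=(\beta+1)\omega_j>0$; since $\hat{M}(\kappa\Lambda_{2N+1})$ is spanned by a Chebyshev system, the uniqueness in \cref{generalized_gauss_quadrature1} identifies $\{\tau_j,\chi_j\}$ as the unique Gauss rule for weight $1$.

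I expect the only real obstacle to be the bookkeeping for repeated indices: one must confirm that the substitution carries $\hat{M}(\kappa\Lambda_{2N+1})$ onto $\hat{M}(\Lambda_{2N+1})$ even in the presence of the $\log$ factors, which is supplied by \cref{repeated_indices} once one observes that $\log x=\kappa^{-1}\log u$ merely rescales each logarithmic term. The integrability and validity of the change of variables are routine given $\mathrm{Re}(\kappa\lambda_n)>-1/2$, which follows from \eqref{lambda_n} together with $\kappa>0$.
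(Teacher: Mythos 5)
Your proof is correct and follows essentially the same route as the paper: the substitution $x=u^{1/\kappa}$ with $1/\kappa-1=\beta$, the observation that $p(u^{1/\kappa})\in\hat{M}(\Lambda_{2N+1})$, and then the assumed exactness of $\{x_j,\omega_j\}$. You are somewhat more thorough than the paper in verifying the membership claim for the logarithmic elements via \cref{repeated_indices} and in invoking uniqueness from \cref{generalized_gauss_quadrature1}, but the core argument is identical.
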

\begin{proof}
For any $p(x) \in \hat{M}(\kappa \Lambda_{2N+1})$, consider the variable transformation $x = t^{\frac{1}{\kappa}}$. Then, $p(t^{\frac{1}{\kappa}})\in \hat{M}(\Lambda_{2N+1})$, and as a result, 
\[
\begin{aligned}
\int_0^1 p(x) \mathrm{d} x = \frac{1}{\kappa}\int_0^1 p(t^{\frac{1}{\kappa}}) t^{\frac{1}{\kappa}-1} \mathrm{d} t
= \frac{1}{\kappa}\int_0^1 p(t^{\frac{1}{\kappa}}) t^{\beta} \mathrm{d} t
=\frac{1}{\kappa}\sum_{j=0}^N p(x_j^{\frac{1}{\kappa}}) \omega_j
= \sum_{j=0}^N p(\tau_j) \chi_j.
\end{aligned}\]
\end{proof}

\section{Error estimation}
\label{sec:error}
This section discusses the error estimation for a specific type of M\"{u}ntz Gauss quadrature and shows that its convergence rate is independent of the integrand's singularity.

For interpolation type numerical quadrature, error estimation is usually given in terms of the derivative of the integrand, and the order of the derivative depends on the number of quadrature nodes. Such estimates can be obtained from polynomial interpolation error estimates or the Peano kernel theorem. Estimating the interpolation error for general functions that form a Chebyshev system is difficult. For special M\"{u}ntz polynomials, however, we can use the Peano kernel theorem to obtain an error estimate for the quadrature.

Let $\mathcal{V}[a, b]$ be the collection of all bounded variation real-valued functions
on the interval $[a,b]$. Suppose that $\mathscr{F}: \mathcal{V}[a, b] \rightarrow \mathbb{R}$ is a bounded linear functional 
, and $K(\theta)$ is the Peano kernel  \cite{powell1981approximation,huybrechs2009generalized} with respect to $\mathscr{F}$ and $n \in \mathbb{N}$:
\begin{equation}
K (\theta) = \frac{1}{n!} \mathscr{F}_x[(x-\theta)_+^n],\quad \theta \in [a,b],
\label{peano_kernal}
\end{equation}
where $\mathscr{F}_x[\cdot]$ denotes the action of the functional $\mathscr{F}$ on a function with respect to $x$, and 
\[
(x-\theta)_+^n = \left \{ 
\begin{aligned}
& (x-\theta)^n, & x\geq \theta, \\
&0, & x<\theta. \\
\end{aligned}
\right.
\]
Based on Taylor's theorem
 and the expression for the remainder in Taylor's series, the Peano kernel theorem can be proved \cite{powell1981approximation}. The Peano kernel theorem provides a useful expression: the action of a linear functional on a function can be represented as the integral of the function's derivative and the Peano kernel.
\begin{theorem}[\cite{powell1981approximation,huybrechs2009generalized,gautschi2011numerical}]\label{peano_thm}
Let $n\in \mathbb{N}$ and $\mathscr{F}: \mathcal{V}[a, b] \rightarrow \mathbb{R}$ be a bounded linear functional satisfying $\mathscr{F}[p(x)]=0$ for all $p(x)\in\mathbb{P}_n$, where $\mathbb{P}_n$ represents the collection of all polynomials with degree less than or equal to $n$. $K(\theta)$ is defined as in \eqref{peano_kernal} and $K(\theta)\in \mathcal{V}[a, b]$. Then, for any $f(x) \in C^{n+1}[a, b]$, we have
\begin{equation}
\mathscr{F}[f] =
\int_0^1 K(\theta) f^{(n+1)}(\theta) \mathrm{d} \theta.
\label{peano_functional}
\end{equation}
\end{theorem}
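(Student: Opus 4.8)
The plan is to combine Taylor's theorem with the hypothesis that $\mathscr{F}$ annihilates $\mathbb{P}_n$, and then interchange the functional with an integral in $\theta$. First I would invoke Taylor's theorem with the integral form of the remainder: for $f\in C^{n+1}[a,b]$ and $x\in[a,b]$,
\[
f(x) = \sum_{k=0}^n \frac{f^{(k)}(a)}{k!}(x-a)^k + \frac{1}{n!}\int_a^x (x-\theta)^n f^{(n+1)}(\theta)\,\mathrm{d}\theta .
\]
Denote the Taylor polynomial by $P_n(x)$ and the remainder by $R_n(x)$. Using the truncated power notation from \eqref{peano_kernal}, the upper limit can be extended to $b$ since $(x-\theta)^n$ is replaced by $(x-\theta)_+^n$, which vanishes for $\theta>x$:
\[
R_n(x) = \frac{1}{n!}\int_a^b (x-\theta)_+^n f^{(n+1)}(\theta)\,\mathrm{d}\theta .
\]

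Next I would apply $\mathscr{F}$ to the decomposition $f=P_n+R_n$. By linearity $\mathscr{F}[f]=\mathscr{F}[P_n]+\mathscr{F}[R_n]$, and since $P_n\in\mathbb{P}_n$ the hypothesis gives $\mathscr{F}[P_n]=0$, so $\mathscr{F}[f]=\mathscr{F}[R_n]$. It then remains to justify
\[
\mathscr{F}_x\!\left[\frac{1}{n!}\int_a^b (x-\theta)_+^n f^{(n+1)}(\theta)\,\mathrm{d}\theta\right] = \int_a^b \frac{1}{n!}\mathscr{F}_x\!\left[(x-\theta)_+^n\right] f^{(n+1)}(\theta)\,\mathrm{d}\theta ,
\]
whose right-hand side is exactly $\int_a^b K(\theta) f^{(n+1)}(\theta)\,\mathrm{d}\theta$ by the definition \eqref{peano_kernal}.

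The hard part is this interchange of the functional and the integral. The approach I would take is to realize the $\theta$-integral as a limit of Riemann sums. For a partition $a=\theta_0<\theta_1<\cdots<\theta_M=b$ with mesh tending to zero, set
\[
S_M(x) = \frac{1}{n!}\sum_{i=1}^M (x-\theta_i)_+^n f^{(n+1)}(\theta_i)\,(\theta_i-\theta_{i-1}) .
\]
I would first argue that $S_M\to R_n$ in the norm of $\mathcal{V}[a,b]$: for $n\ge 1$ the map $\theta\mapsto(x-\theta)_+^n$ is continuous into $\mathcal{V}[a,b]$, since the total variation of $(x-\theta_1)_+^n-(x-\theta_2)_+^n$ equals $(b-\theta_1)^n-(b-\theta_2)^n\to 0$ as $\theta_2\to\theta_1$; combined with the continuity of $f^{(n+1)}$, the Riemann sums of this $\mathcal{V}[a,b]$-valued integrand converge in the bounded-variation norm. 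Because $\mathscr{F}$ is a bounded (hence continuous) linear functional on $\mathcal{V}[a,b]$, I may then pass the limit through it,
\[
\mathscr{F}[R_n] = \lim_{M\to\infty}\mathscr{F}[S_M] = \lim_{M\to\infty}\frac{1}{n!}\sum_{i=1}^M \mathscr{F}_x\!\left[(x-\theta_i)_+^n\right] f^{(n+1)}(\theta_i)\,(\theta_i-\theta_{i-1}) .
\]

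Finally I would recognize the last expression as a Riemann sum for $K(\theta) f^{(n+1)}(\theta)$; because $K\in\mathcal{V}[a,b]$ is Riemann integrable and $f^{(n+1)}$ is continuous, this converges to $\int_a^b K(\theta) f^{(n+1)}(\theta)\,\mathrm{d}\theta$, which closes the argument. The principal subtlety to watch is that the convergence $S_M\to R_n$ must hold in the $\mathcal{V}$-norm rather than merely pointwise, since only norm convergence licenses moving the limit past the bounded functional $\mathscr{F}$; the total-variation estimate above is what secures this, and it is precisely here that the hypotheses $K\in\mathcal{V}[a,b]$ and $f\in C^{n+1}[a,b]$ are used. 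An alternative to the Riemann-sum argument would be to represent $\mathscr{F}$ through a Riemann--Stieltjes integral against a function of bounded variation and invoke a Fubini theorem for iterated Stieltjes integrals, but the elementary approximation argument uses the stated boundedness hypothesis most directly.
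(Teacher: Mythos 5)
The paper offers no proof of this theorem: it is quoted from the literature, with only the preceding remark that it ``can be proved'' from Taylor's theorem and the expression for the remainder. Your argument fills in precisely that route --- Taylor expansion with integral remainder extended via the truncated power, annihilation of $\mathbb{P}_n$, and a Riemann-sum justification of the interchange of $\mathscr{F}_x$ with the $\theta$-integral --- and it is correct for $n\ge 1$ (you also silently fix the paper's typo $\int_0^1$ for $\int_a^b$). The one caveat is $n=0$: there $\theta\mapsto(x-\theta)_+^0$ is a jump indicator and is \emph{not} continuous into $\mathcal{V}[a,b]$, since the difference of two such indicators has total variation $2$ however close the jump points are, so your norm-convergence step fails and you would need the Riemann--Stieltjes/Fubini alternative you mention; as the paper only invokes the theorem with $n=N\ge 1$, this is a marginal point, but it is worth flagging that your interchange argument genuinely uses $n\ge 1$.
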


From \cref{peano_functional}, it is evident that the following estimate holds:
\begin{equation}
\left |\mathscr{F}[f] \right | \leq \|K \|_1 \|f^{(n+1)} \|_{\infty}.
\label{peano_estimate}
\end{equation}
Therefore, if we choose a specific $\mathscr{F}$ that satisfies the conditions of \cref{peano_thm}, we can obtain an estimate for $|\mathscr{F}[f]|$ using \cref{peano_estimate}. In particular, we utilize it to estimate the error of a certain class of M\"{u}ntz Gauss quadratures.

\begin{lemma}
[\cite{huybrechs2009generalized}]
\label{estimation_lemma}
Let $\omega(x)$ be a weight function. The functional 
\begin{equation}
\mathscr{F}[u] = I[u] - Q_N[u] 
\end{equation}
 defines the error in the numerical approximation of the exact integral $I[u]$ as defined in \cref{symbol_integral}, using Gauss quadrature $Q_N[u]$ as defined in \cref{quadrature1}.
If $\mathscr{F}[p(x)] = 0$, $\forall p(x) \in \mathbb{P}_N$, then for any $u\in C^{N+1}[a,b]$, it follows that 
\begin{equation}
\left| I[u] - Q_N[u]\right | \leq \frac{(b-a)^{N+1}}{N!} \int_a^b \omega(x) \mathrm{d} x \|u^{(N+1)} \|_\infty.
\end{equation} 
\end{lemma}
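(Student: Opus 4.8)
The plan is to apply the Peano kernel theorem (\cref{peano_thm}) directly, with $n=N$. First I would check that $\mathscr{F}$ meets the hypotheses of that theorem: it is a bounded linear functional on $\mathcal{V}[a,b]$, being the difference of the integral functional $I$ (bounded since $\omega$ is integrable) and the quadrature functional $Q_N$ (a finite combination of point evaluations), and by assumption it annihilates every $p\in\mathbb{P}_N$. Invoking \cref{peano_thm} then gives the representation
\[
\mathscr{F}[u] = \int_a^b K(\theta)\, u^{(N+1)}(\theta)\, \mathrm{d}\theta, \qquad K(\theta) = \frac{1}{N!}\,\mathscr{F}_x\big[(x-\theta)_+^N\big],
\]
together with the bound \cref{peano_estimate}, namely $|\mathscr{F}[u]| \le \|K\|_1\,\|u^{(N+1)}\|_\infty$. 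The whole task then reduces to estimating $\|K\|_1 = \int_a^b |K(\theta)|\,\mathrm{d}\theta$.

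The key step is a uniform pointwise bound on $K(\theta)$. I would expand
\[
\mathscr{F}_x\big[(x-\theta)_+^N\big] = \int_a^b (x-\theta)_+^N\,\omega(x)\,\mathrm{d}x - \sum_{k=0}^N (x_k-\theta)_+^N\, w_k,
\]
and observe that each of the two terms on the right is nonnegative: the integrand is nonnegative and $\omega>0$, while the Gaussian weights $w_k$ are positive by \cref{generalized_gauss_quadrature1}. Since $0 \le (x-\theta)_+ \le b-a$ for all $x,\theta\in[a,b]$, the integral term is at most $(b-a)^N \int_a^b \omega(x)\,\mathrm{d}x$. For the quadrature term I would use that the rule is exact on constants (constants lie in $\mathbb{P}_N$), so that $\sum_{k=0}^N w_k = Q_N[1] = I[1] = \int_a^b \omega(x)\,\mathrm{d}x$; hence this term is bounded by the same quantity.

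With both terms nonnegative and bounded by the common value $M := (b-a)^N \int_a^b \omega(x)\,\mathrm{d}x$, their difference satisfies $|\mathscr{F}_x[(x-\theta)_+^N]| \le M$, whence $|K(\theta)| \le M/N!$ uniformly in $\theta$. Integrating over $[a,b]$ yields
\[
\|K\|_1 \le (b-a)\,\frac{M}{N!} = \frac{(b-a)^{N+1}}{N!}\int_a^b \omega(x)\,\mathrm{d}x,
\]
and substituting into \cref{peano_estimate} produces the claimed estimate. The one point requiring care---and the only place the special structure of the Gauss rule enters---is avoiding a spurious factor of $2$: instead of bounding the difference by the sum of the two individual bounds, I would use that a difference of two numbers both lying in $[0,M]$ itself lies in $[-M,M]$. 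This is precisely where the positivity of the weights and the exactness of $Q_N$ on constants are indispensable; beyond this observation the argument is a routine application of \cref{peano_thm,peano_estimate}.
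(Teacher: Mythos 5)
Your proof is correct and follows exactly the route the paper intends: the lemma is stated there as a cited result from \cite{huybrechs2009generalized} with no proof given, but the Peano-kernel machinery set up immediately beforehand (\cref{peano_thm} and the bound \cref{peano_estimate}) is precisely what you invoke, and your estimate of $\|K\|_1$ --- using the positivity of the Gaussian weights together with exactness on constants to bound each of the two terms by $(b-a)^N\int_a^b\omega(x)\,\mathrm{d}x$ and thereby avoid a spurious factor of $2$ --- is the standard argument. The only hypothesis of \cref{peano_thm} you leave unchecked is that $K\in\mathcal{V}[a,b]$, which is immediate here since $K(\theta)$ is the difference of a continuous function of $\theta$ and a piecewise polynomial in $\theta$.
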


We consider the interval $[0,1]$ and M\"{u}ntz sequence
\begin{equation}
\Lambda_{2N+1} 
= 
\left \{\lambda_v = \left \lfloor \frac{v}{2} \right \rfloor: v=0,1,\cdots,2N+1  \right \},
\label{temp42}
\end{equation}
where $\lfloor \cdot \rfloor$ denotes the floor function. Let $\omega(x) = x^\beta$ satisfy \cref{lambda_n} and $Q_N[\cdot]$ be the associated Gaussian rule. We have the following estimation.

\begin{theorem}\label{peano_thm3}
Let $\Lambda_{2N+1}$ be defined as in $\eqref{temp42}$.
For any $u,v \in C^{N+1}[0,1]$, we set $f(x) = u(x) + v(x)\log x$, then
\begin{equation}
\left |I[f]-Q_N[f] \right| \leq 
\frac{1}{N!} \left( 
\frac{1}{1+\beta} \|u^{(N+1)} \|_\infty 
+
\frac{1}{(1+\beta)^2} \|v^{(N+1)} \|_\infty
\right ).
\label{quadrature_error3}
\end{equation}
\end{theorem}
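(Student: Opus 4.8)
The plan is to exploit that the $N+1$-point rule $Q_N$ is exact on the \emph{entire} space $\hat M(\Lambda_{2N+1})$, not merely on a subspace. Since each integer $k\in\{0,1,\dots,N\}$ occurs exactly twice in the sequence $\eqref{temp42}$, \cref{repeated_indices} gives $\hat M(\Lambda_{2N+1})=\operatorname{span}\{x^k,\;x^k\log x : k=0,1,\dots,N\}$. In particular $Q_N$ integrates exactly every $p\in\mathbb{P}_N$ and every product $p\log x$ with $p\in\mathbb{P}_N$. Writing $\mathscr{F}[\cdot]=I[\cdot]-Q_N[\cdot]$ and splitting $f=u+v\log x$, I would bound $\mathscr{F}[u]$ and $\mathscr{F}[v\log x]$ separately and add the two estimates.

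For the smooth part, $\mathscr{F}[p]=0$ for all $p\in\mathbb{P}_N$, so \cref{estimation_lemma} applies directly with weight $\omega(x)=x^\beta$ on $[0,1]$; using $b-a=1$ and $\int_0^1 x^\beta\,\mathrm{d}x=1/(1+\beta)$ yields $|\mathscr{F}[u]|\le \frac{1}{N!(1+\beta)}\,\|u^{(N+1)}\|_\infty$, which is precisely the first term of $\eqref{quadrature_error3}$.

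The key step is the logarithmic part, which I would handle by absorbing the logarithm into the weight. Since $\log x\le 0$ on $(0,1)$, the function $\tilde\omega(x):=-x^\beta\log x$ is a genuine nonnegative weight, and $\tilde\omega_k:=-\omega_k\log x_k>0$ because $x_k\in(0,1)$ and $\omega_k>0$. Setting $\tilde I[v]=\int_0^1 v\,\tilde\omega\,\mathrm{d}x$ and $\tilde Q_N[v]=\sum_{k=0}^N v(x_k)\tilde\omega_k$, a direct rearrangement gives $\mathscr{F}[v\log x]=-\big(\tilde I[v]-\tilde Q_N[v]\big)=:-\tilde{\mathscr{F}}[v]$. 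The crucial observation is that $\tilde Q_N$ is again a positive-weight rule that is exact on $\mathbb{P}_N$ for the weight $\tilde\omega$: indeed $\tilde{\mathscr{F}}[p]=-\mathscr{F}[p\log x]=0$ for every $p\in\mathbb{P}_N$, because $p\log x\in\hat M(\Lambda_{2N+1})$. Hence $\tilde Q_N$ meets every hypothesis of \cref{estimation_lemma}, and applying that lemma with the weight $\tilde\omega$ gives $|\tilde{\mathscr{F}}[v]|\le \frac{1}{N!}\big(\int_0^1(-x^\beta\log x)\,\mathrm{d}x\big)\|v^{(N+1)}\|_\infty$.

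Finally I would evaluate $\int_0^1(-x^\beta\log x)\,\mathrm{d}x=1/(1+\beta)^2$ (e.g.\ by differentiating $\int_0^1 x^s\,\mathrm{d}x=1/(s+1)$ with respect to $s$, or by integration by parts), so that $|\mathscr{F}[v\log x]|\le \frac{1}{N!(1+\beta)^2}\|v^{(N+1)}\|_\infty$, and add the two bounds to obtain $\eqref{quadrature_error3}$. The main obstacle is recognizing and justifying this reduction of the logarithmic part to a standard Gauss-quadrature error for the modified weight $\tilde\omega$; once the exactness $\tilde{\mathscr{F}}[\mathbb{P}_N]=\{0\}$ and the positivity $\tilde\omega_k>0$ are checked, no new machinery beyond \cref{estimation_lemma} is needed. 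A minor technical point to verify is that $\mathscr{F}[v\log x]$ and $\tilde{\mathscr{F}}[v]$ are well-defined bounded functionals, which holds because $x^\beta\log x$ is integrable on $(0,1)$ for $\beta>-1$ and the nodes satisfy $x_k\in(0,1)$.
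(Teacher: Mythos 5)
Your proposal is correct and follows essentially the same route as the paper's proof: split $f=u+v\log x$, apply \cref{estimation_lemma} with weight $x^\beta$ to the smooth part, and absorb $-\log x$ into a modified weight $\tilde\omega(x)=-x^\beta\log x$ so that \cref{estimation_lemma} applies again to the logarithmic part, with $\int_0^1(-x^\beta\log x)\,\mathrm{d}x=1/(1+\beta)^2$. You spell out a few details the paper leaves implicit (positivity of the modified weights and the exactness of the modified rule on $\mathbb{P}_N$ via \cref{repeated_indices}), but the argument is the same.
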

\begin{proof}
We firstly take $\mathscr{F}[u] = I[u] - Q_N[u]$ and $\omega(x) = x^\beta$. Then we have $\mathscr{F}[p]=0$, $\forall p\in \mathbb{P}_N$. By \cref{estimation_lemma}, we have 
\begin{equation}
\left |I[u]-Q_N[u] \right| \leq \frac{1}{N!} \frac{1}{1+\beta} \|u^{(N+1)} \|_\infty.
\end{equation}
Secondly, we denote $\tilde{I}[v]$ and $\tilde{Q}_N[v]$ by
\[
\tilde{I}[v] = I[v(-\log x)],\quad \tilde{Q}_N [v] = Q_N[v(-\log x)].
\]
Let  $\mathscr{F}[v] = \tilde{I}[v] - \tilde{Q}_N [v]$. Then we have $\mathscr{F}[p]=0$, $\forall p\in \mathbb{P}_N$. Taking replace $\omega(x)$ with $\omega(x) (-\log x)$ in \cref{estimation_lemma}, we have 
\begin{equation}
\left |I[v(-\log x)]-Q_N[v(-\log x)] \right| \leq \frac{1}{N!} \frac{1}{(1+\beta)^2} \|v^{(N+1)} \|_\infty.
\end{equation}
Hence \cref{quadrature_error3} follows from combining 
\[
\left |I[v(-\log x)]-Q_N[v(-\log x)] \right| = \left |I[v\log x]-Q_N[v\log x] \right|
\]
with
\[
\left |I[f]-Q_N[f] \right| \leq \left |I[u]-Q_N[u] \right| + \left |I[v\log x]-Q_N[v\log x] \right|.
\]
\end{proof}

According to \cref{peano_thm3}, the error in Gauss quadrature for singular functions at endpoints, of the form $u(x) + v(x)\log x$, is not influenced by $\log x$, but rather determined by the smoothness of $u(x)$ and $v(x)$.

Generally, let $T \in \mathbb{N}^+$, we consider M\"{u}ntz sequence as
\begin{equation}
\label{muntz_T}
\Lambda_{T(N+1)} =\left \{ \lambda_v =  \left \lfloor \frac{v}{T} \right \rfloor: v=0,1,\cdots,T(N+1) \right \}.
\end{equation}
Let $\omega(x) = x^\beta$ and $S = \left \lceil \frac{T(N+1)}{2} \right \rceil$. $\lceil \cdot \rceil$ denotes the ceil function. $Q_S[\cdot]$ are the associated Gauss quadrature. Similar to the result stated in \cref{peano_thm3}, the following estimation holds.
\begin{theorem}
\label{error_estimation}
Let $\Lambda_{T(N+1)}$ be defined as in \cref{muntz_T} and $\beta\in\mathbb{R}$ satisfy \cref{lambda_n}. For any $u_j \in C^{N+1}[0,1]$, we set $f(x) = \sum_{j=0}^{T-1} u_j(x) \log^j x$, then
\begin{equation}
\left | I[f] - Q_S[f]\right | \leq \frac{1}{N!} \sum_{j=0}^{T-1} \frac{\Gamma(j+1)}{(1+\beta)^{j+1}} \|u_j^{(N+1)} \|_\infty,
\end{equation}
where $\Gamma$ is the Gamma function and $S=\left \lceil \frac{T(N+1)}{2} \right \rceil$.
\end{theorem}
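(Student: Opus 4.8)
The plan is to follow and extend the decomposition strategy used in the proof of \cref{peano_thm3} (the case $T=2$), treating each logarithmic power $\log^j x$ as inducing a modified weight function and handling the $T$ summands of $f$ independently.

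First I would pin down the exactness space. By \cref{repeated_indices}, since $\Lambda_{T(N+1)}$ in \cref{muntz_T} consists of each integer $0,1,\dots,N$ repeated $T$ times together with a single copy of $N+1$, the space $\hat M(\Lambda_{T(N+1)}) = \mathrm{span}\{L_0,\dots,L_{T(N+1)}\}$ contains all the functions $x^k\log^j x$ with $0\le k\le N$ and $0\le j\le T-1$. Since $S=\lceil T(N+1)/2\rceil$ gives $2S+1\ge T(N+1)$, the quadrature $Q_S$ (with $S+1$ nodes and exactness on $L_0,\dots,L_{2S+1}$) is exact on $\hat M(\Lambda_{T(N+1)})$, hence on every such $x^k\log^j x$. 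In particular, for each fixed $j\in\{0,\dots,T-1\}$ and any $p\in\mathbb{P}_N$, the function $p(x)(-\log x)^j$ lies in this span up to sign, so $Q_S$ is exact on it.

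Next, for each $j$ I would introduce the modified weight $\omega_j(x)=x^\beta(-\log x)^j$ and the functional $\mathscr{F}_j[v]=I[v(-\log x)^j]-Q_S[v(-\log x)^j]$, mirroring the $\tilde I,\tilde Q_N$ device of \cref{peano_thm3}. The effective nodal weights $w_k(-\log x_k)^j$ remain positive because $x_k\in(0,1)$ and $w_k>0$, and by the previous step $\mathscr{F}_j[p]=0$ for all $p\in\mathbb{P}_N$. Applying \cref{estimation_lemma} with weight $\omega_j$ on $[0,1]$ then yields $|\mathscr{F}_j[v]|\le \frac{1}{N!}\bigl(\int_0^1 x^\beta(-\log x)^j\,\mathrm{d}x\bigr)\|v^{(N+1)}\|_\infty$. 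The moment integral is evaluated by the substitution $x=e^{-t}$, giving $\int_0^1 x^\beta(-\log x)^j\,\mathrm{d}x=\Gamma(j+1)/(1+\beta)^{j+1}$, which supplies the constant in the statement.

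Finally, writing $u_j\log^j x=(-1)^j u_j(-\log x)^j$ so that $I[u_j\log^j x]-Q_S[u_j\log^j x]=(-1)^j\mathscr{F}_j[u_j]$, I would assemble the bound for $f=\sum_{j=0}^{T-1}u_j\log^j x$ by the triangle inequality, summing the $T$ estimates to obtain $|I[f]-Q_S[f]|\le \frac{1}{N!}\sum_{j=0}^{T-1}\frac{\Gamma(j+1)}{(1+\beta)^{j+1}}\|u_j^{(N+1)}\|_\infty$. The main obstacle is the first step: correctly identifying, via \cref{repeated_indices}, that $\hat M(\Lambda_{T(N+1)})$ contains all $x^k\log^j x$ for $k\le N$, $j\le T-1$ and confirming that $Q_S$ attains this exactness, since this is what licenses the $T$-fold application of \cref{estimation_lemma}; the remaining steps are the routine moment computation and the triangle-inequality assembly.
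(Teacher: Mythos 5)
Your proposal is correct and follows essentially the same route the paper intends: the paper gives no separate proof of \cref{error_estimation}, stating only that it is ``similar to'' \cref{peano_thm3}, and your argument is exactly that generalization --- split $f$ into the $T$ summands $u_j\log^j x$, treat $x^\beta(-\log x)^j$ as a modified weight whose effective nodal weights stay positive, apply \cref{estimation_lemma} to each piece using the exactness of $Q_S$ on $\mathbb{P}_N\cdot(-\log x)^j\subset\hat M(\Lambda_{T(N+1)})$, evaluate the moment $\int_0^1 x^\beta(-\log x)^j\,\mathrm{d}x=\Gamma(j+1)/(1+\beta)^{j+1}$, and sum by the triangle inequality. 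No gaps beyond the minor bookkeeping (extending the Müntz sequence to $2S+2$ functions when $T(N+1)+1<2S+2$) that the paper itself glosses over.
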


\section{Numerical examples}
\label{sec:examples}
In this section we present some numerical examples.
\begin{example}
\label{exm1}
Suppose that $\beta=-1/4$ and M\"{u}ntz sequence $\Lambda_{2N+1}$ satisfies
\[
\lambda_{2k} = k+\frac{2}{3}, \quad \lambda_{2k+1} = k-\frac{2}{3}, \quad k=0,1,\cdots,N.
\]
Gaussian nodes $x_k$ and weights $\omega_k$, $k=0,1,\cdots,N$, as shown in \cref{tab:quadrature1}, such that 
\[
\int_0^1 \varphi(x) \omega(x) \mathrm{d} x = \sum_{k=0}^N \varphi(x_k) \omega_k,\quad \forall \varphi(x)\in \hat{M}(\Lambda_{2N+1}).
\]
Note that $\Lambda_{2N+1}$ is not monotonous and its minimum index is $-{2}/{3}$. Therefore, some functions in $\hat{M}(\Lambda_{2N+1})$ may not even be in $L^2[0,1]$.
From \cref{tab:quadrature1}, it can be observed that the distribution of Gaussian nodes is more concentrated around 0, which is suitable for handling cases where the integrand grows or decreases rapidly near the left endpoint of the interval. In \cref{tab:err1}, we present a list of relative errors, denoted as 
\begin{equation}
R[\varphi] = \bigg |\frac{ I[\varphi] - Q_N[\varphi] }{ I[\varphi]} \bigg |,
\label{eq:relative_err}
\end{equation}
where $I[\varphi]$ represents the exact integral as defined in \cref{symbol_integral}, and $Q_N[\varphi]$ denotes the quadrature as given in \cref{quadrature1} with $N+1$ nodes and weights listed in \cref{tab:quadrature1}. 
The results in \cref{tab:err1} demonstrate that the $N+1$ quadrature rule $Q_N[\cdot]$ achieves exactness, within machine accuracy, for any $\varphi \in \hat{M}(\Lambda_{2N+1})$.
\end{example}

{\footnotesize
\begin{longtable}{|p{0.8cm}|p{3.4cm}|p{3.4cm}|}
\caption{\rm Gauss quadrature of \cref{exm1}.}
\label{tab:quadrature1}\\
\hline
$N+1$ & Nodes $x_k$  & Weights $\omega_k$  \\
\hline 
\endfirsthead
\multicolumn{3}{r}{\textit{Continued on next page}} \\
\endfoot
\endlastfoot
 \ \ 20& 2.3157766972828912(-6) & 9.4222433583541251(-4) \\ 
 & 2.7233174824378183(-4) & 5.2428252534242620(-3) \\ 
 & 1.8028430213927918(-3) & 1.2949690421473462(-2) \\ 
 & 6.2586172324495502(-3) & 2.3479387650276507(-2) \\ 
 & 1.5778280059386831(-2) & 3.6207147638052821(-2) \\ 
 & 3.2714449729529464(-2) & 5.0373872558283253(-2) \\ 
 & 5.9338006505081552(-2) & 6.5125430600342371(-2) \\ 
 & 9.7535823413401598(-2) & 7.9558300013059469(-2) \\ 
 & 1.4853234267777532(-1) & 9.2767975279634124(-2) \\ 
 & 2.1266338866366832(-1) & 1.0389748261138633(-1) \\ 
 & 2.8922591041325307(-1) & 1.1218341981654856(-1) \\ 
 & 3.7642038226828578(-1) & 1.1699708689403455(-1) \\ 
 & 4.7139401605805348(-1) & 1.1787854613502737(-1) \\ 
 & 5.7038355874684765(-1) & 1.1456183436550083(-1) \\ 
 & 6.6894714365725705(-1) & 1.0699002517468645(-1) \\ 
 & 7.6226630027383013(-1) & 9.5319381997936783(-2) \\ 
 & 8.4549259057549253(-1) & 7.9912437335814088(-2) \\ 
 & 9.1410908658850543(-1) & 6.1320533998553618(-2) \\ 
 & 9.6427580053292161(-1) & 4.0258021409587424(-2) \\ 
 & 9.9313650659281172(-1) & 1.7593770011811099(-2) \\ 
\hline
\hline 
 \ \ 40 & 1.5187265199530925(-7) & 1.2213355322124739(-4) \\ 
 & 1.7992560515276967(-5) & 6.8605007097713726(-4) \\ 
 & 1.2069214706402429(-4) & 1.7228923928849940(-3) \\ 
 & 4.2706310022074624(-4) & 3.2002598939306478(-3) \\ 
 & 1.1039676484141875(-3) & 5.0956596491180742(-3) \\ 
 & 2.3612508697757515(-3) & 7.3804450403362623(-3) \\ 
 & 4.4453368850422828(-3) & 1.0020032335832917(-2) \\ 
 & 7.6316415093306460(-3) & 1.2974358916941377(-2) \\ 
 & 1.2215994413850436(-2) & 1.6198429091116944(-2) \\ 
 & 1.8505288925579538(-2) & 1.9642930054004918(-2) \\ 
 & 2.6807597817191973(-2) & 2.3254907906113849(-2) \\ 
 & 3.7422007865292671(-2) & 2.6978494103994850(-2) \\ 
 & 5.0628434205556849(-2) & 3.0755672338654855(-2) \\ 
 & 6.6677677381083530(-2) & 3.4527075372775690(-2) \\ 
 & 8.5781981413664360(-2) & 3.8232800993517967(-2) \\ 
 & 1.0810634033123627(-1) & 4.1813235992838205(-2) \\ 
 & 1.3376078361624985(-1) & 4.5209876987019809(-2) \\ 
 & 1.6279384840367642(-1) & 4.8366136936514245(-2) \\ 
 & 1.9518741849553753(-1) & 5.1228126426811106(-2) \\ 
 & 2.3085307803670563(-1) & 5.3745399118535722(-2) \\ 
 & 2.6963009178752406(-1) & 5.5871651265625516(-2) \\ 
 & 3.1128508519477094(-1) & 5.7565365827494788(-2) \\ 
 & 3.5551345683402841(-1) & 5.8790392455852776(-2) \\ 
 & 4.0194251424969035(-1) & 5.9516455508932177(-2) \\ 
 & 4.5013628275188522(-1) & 5.9719583223261812(-2) \\ 
 & 4.9960189633891738(-1) & 5.9382452242441777(-2) \\ 
 & 5.4979744157007604(-1) & 5.8494642849110580(-2) \\ 
 & 6.0014108983850478(-1) & 5.7052801454937990(-2) \\ 
 & 6.5002132193804196(-1) & 5.5060708157983945(-2) \\ 
 & 6.9880802184137081(-1) & 5.2529248460949217(-2) \\ 
 & 7.4586419486102140(-1) & 4.9476289542301162(-2) \\ 
 & 7.9055804937950391(-1) & 4.5926462772571036(-2) \\ 
 & 8.3227517151117658(-1) & 4.1910855467128326(-2) \\ 
 & 8.7043051867701615(-1) & 3.7466616191765861(-2) \\ 
 & 9.0447996135064512(-1) & 3.2636479428631744(-2) \\ 
 & 9.3393111257234906(-1) & 2.7468217697410070(-2) \\ 
 & 9.5835320406140556(-1) & 2.2014035553052494(-2) \\ 
 & 9.7738580706529599(-1) & 1.6329952018268864(-2) \\ 
 & 9.9074634991357979(-1) & 1.0475501181546670(-2) \\ 
 & 9.9823822268456675(-1) & 4.5199615467215945(-3) \\ 
  \hline 
 \end{longtable}
 }

\begin{table}[htbp]
\centering \small
\caption{\rm Relative error of Gauss quadrature in \cref{exm1}.}
\label{tab:err1}  
\begin{tabular}{|c|c|c|c|c|}
\hline 
$N+1$ & $\varphi(x) $ & Relative error $R[\varphi]$ & $\varphi(x)$ & Relative error $R[\varphi]$ \\
\hline 
 20 & $x^{2/3}$ & 1.1102230246251565(-15) & $x^{32/3}$ & 1.8873791418627661(-15) \\ 
 & $x^{-2/3}$ & 7.7715611723760958(-16) & $x^{28/3}$ & 1.7763568394002505(-15) \\ 
 & $x^{5/3}$ & 1.5543122344752192(-15) & $x^{35/3}$  & 1.8873791418627661(-15) \\ 
 & $x^{1/3}$ & 2.2204460492503131(-16) & $x^{31/3}$ & 1.8873791418627661(-15) \\ 
 & $x^{8/3}$ & 1.6653345369377348(-15) & $x^{38/3}$ & 1.8873791418627661(-15) \\ 
 & $x^{4/3}$ & 1.2212453270876722(-15) & $x^{34/3}$ & 1.6653345369377348(-15) \\ 
 & $x^{11/3}$ & 1.5543122344752192(-15) & $x^{41/3}$ & 1.9984014443252818(-15) \\ 
 & $x^{7/3}$ & 1.5543122344752192(-15) & $x^{37/3}$ & 1.9984014443252818(-15) \\ 
 & $x^{14/3}$ & 1.6653345369377348(-15) & $x^{44/3}$ & 1.8873791418627661(-15) \\ 
 & $x^{10/3}$ & 1.3322676295501878(-15) & $x^{40/3}$ & 1.9984014443252818(-15) \\ 
 & $x^{17/3}$ & 1.7763568394002505(-15) & $x^{47/3}$ & 1.9984014443252818(-15) \\ 
 & $x^{13/3}$ & 1.8873791418627661(-15) & $x^{43/3}$ & 1.8873791418627661(-15) \\ 
 & $x^{20/3}$ & 1.6653345369377348(-15) & $x^{50/3}$ & 1.8873791418627661(-15) \\ 
 & $x^{16/3}$ & 1.8873791418627661(-15) & $x^{46/3}$ & 1.8873791418627661(-15) \\ 
 & $x^{23/3}$ & 1.6653345369377348(-15) & $x^{53/3}$ & 2.2204460492503131(-15) \\ 
 & $x^{19/3}$ & 1.5543122344752192(-15) & $x^{49/3}$ & 1.9984014443252818(-15) \\ 
 & $x^{26/3}$ & 1.9984014443252818(-15) & $x^{56/3}$ & 2.1094237467877974(-15) \\ 
 & $x^{22/3}$ & 1.7763568394002505(-15) & $x^{52/3}$ & 1.9984014443252818(-15) \\ 
 & $x^{29/3}$ & 1.7763568394002505(-15) & $x^{59/3}$ & 2.1094237467877974(-15) \\ 
 & $x^{25/3}$ & 1.6653345369377348(-15) & $x^{55/3}$ & 1.9984014443252818(-15) \\ 
 \hline 
 \end{tabular} 
 \end{table}

\begin{example}
\label{exm2}
Suppose that $\beta=-1/3$ and M\"{u}ntz sequence $\Lambda_{2N+1}$ satisfies
\[
\lambda_{2k} = \lambda_{2k+1} = k-\frac{1}{2}, \quad k=0,1,\cdots,N.
\]
Gaussian nodes $x_k$ and weights $\omega_k$, $k=0,1,\cdots,N$, as shown in \cref{tab:quadrature2}, are exact for any function in $\hat{M}(\Lambda_{2N+1})$.

Due to repeated indices, $\hat{M}(\Lambda_{2N+1})$ contains logarithmic functions.
From \cref{tab:quadrature2}, it is evident that the Gaussian nodes are densely clustered around 0, making them well-suited for accurately handling integrands that exhibit rapid growth or decline near the left endpoint of the interval. Similar to \cref{exm1}, in \cref{tab:err2}, we provide a list of relative errors $R[\varphi]$ as defined in \cref{eq:relative_err}, from which we observe that $N+1$ quadrature rule $Q_N[\cdot]$ achieve exactness, within machine accuracy, for a collection of basis functions belonging to $\hat{M}(\Lambda_{2N+1})$.\end{example}

{\footnotesize
\begin{longtable}{|p{0.8cm}|p{3.4cm}|p{3.4cm}|}
\caption{\rm Gauss quadrature of \cref{exm2}.}
\label{tab:quadrature2}\\
\hline
$N+1$ & Nodes $x_k$  & Weights $\omega_k$  \\
\hline 
\endfirsthead
\multicolumn{3}{r}{\textit{Continued on next page}} \\
\endfoot
\endlastfoot
\ \ 20 & 1.7885486758102558(-8) & 1.1523469504263825(-4) \\ 
 & 7.1551616529140532(-5) & 5.8503872337450366(-3) \\ 
 & 8.3561615801303415(-4) & 1.6215461369396420(-2) \\ 
 & 3.7289725070694703(-3) & 2.9742159038797088(-2) \\ 
 & 1.0841226296519608(-2) & 4.5322220327996356(-2) \\ 
 & 2.4650360296325360(-2) & 6.1907685033178526(-2) \\ 
 & 4.7699277205001729(-2) & 7.8483315017420313(-2) \\ 
 & 8.2247917905893128(-2) & 9.4076870863819106(-2) \\ 
 & 1.2993765604008317(-1) & 1.0778485377987003(-1) \\ 
 & 1.9150387312943998(-1) & 1.1880283936542314(-1) \\ 
 & 2.6656819611889676(-1) & 1.2645525913197742(-1) \\ 
 & 3.5353427020110151(-1) & 1.3022161026435305(-1) \\ 
 & 4.4960088886379523(-1) & 1.2975708380221534(-1) \\ 
 & 5.5089479647303985(-1) & 1.2490623592648446(-1) \\ 
 & 6.5271364954315925(-1) & 1.1570882831668874(-1) \\ 
 & 7.4985864107441091(-1) & 1.0239741728426736(-1) \\ 
 & 8.3702725462981842(-1) & 8.5386716624398701(-2) \\ 
 & 9.0923046967895849(-1) & 6.5255288538808617(-2) \\ 
 & 9.6219650408072144(-1) & 4.2721655477618713(-2) \\ 
 & 9.9273163921294560(-1) & 1.8641580804642555(-2) \\ 
\hline
\hline 
 \ \ 40 & 1.1093514362142486(-9) & 1.8057444657681708(-5) \\ 
 & 4.4555423015369943(-6) & 9.2111663330328875(-4) \\ 
 & 5.2578791974959081(-5) & 2.5842500435975871(-3) \\ 
 & 2.3857885803177525(-4) & 4.8339802247692066(-3) \\ 
 & 7.0975130683830704(-4) & 7.5702889670536764(-3) \\ 
 & 1.6619379108385249(-3) & 1.0712736841684651(-2) \\ 
 & 3.3333984874852280(-3) & 1.4189619366383679(-2) \\ 
 & 5.9972993504626114(-3) & 1.7933242068639596(-2) \\ 
 & 9.9530127416244830(-3) & 2.1877894697564508(-2) \\ 
 & 1.5516454181706534(-2) & 2.5959008876182795(-2) \\ 
 & 2.3009710582646729(-2) & 3.0112906557680737(-2) \\ 
 & 3.2750231378357329(-2) & 3.4276871917926208(-2) \\ 
 & 4.5039867503105624(-2) & 3.8389410738086611(-2) \\ 
 & 6.0154048467500588(-2) & 4.2390620986094932(-2) \\ 
 & 7.8331385908669379(-2) & 4.6222627958035944(-2) \\ 
 & 9.9763982860287634(-2) & 4.9830053211846963(-2) \\ 
 & 1.2458871181630048(-1) & 5.3160495584589983(-2) \\ 
 & 1.5287970184084276(-1) & 5.6165008075362670(-2) \\ 
 & 1.8464224606269530(-1) & 5.8798557911169114(-2) \\ 
 & 2.1980830659385084(-1) & 6.1020459538421661(-2) \\ 
 & 2.5823375507081436(-1) & 6.2794772061574217(-2) \\ 
 & 2.9969744458992525(-1) & 6.4090654045891315(-2) \\ 
 & 3.4390216383989641(-1) & 6.4882669767053980(-2) \\ 
 & 3.9047747783555770(-1) & 6.5151042017447869(-2) \\ 
 & 4.3898441297273816(-1) & 6.4881847522119687(-2) \\ 
 & 4.8892189830990607(-1) & 6.4067151908927977(-2) \\ 
 & 5.3973483117197907(-1) & 6.2705082035888510(-2) \\ 
 & 5.9082359445178412(-1) & 6.0799834313980346(-2) \\ 
 & 6.4155481636620071(-1) & 5.8361618480053577(-2) \\ 
 & 6.9127313181855365(-1) & 5.5406537072951248(-2) \\ 
 & 7.3931367871632625(-1) & 5.1956401646260371(-2) \\ 
 & 7.8501504324459204(-1) & 4.8038487514455615(-2) \\ 
 & 8.2773235570215697(-1) & 4.3685229585270061(-2) \\ 
 & 8.6685023342696965(-1) & 3.8933862619944019(-2) \\ 
 & 9.0179526983967162(-1) & 3.3826010239928678(-2) \\ 
 & 9.3204777915725145(-1) & 2.8407228822337316(-2) \\ 
 & 9.5715252676017615(-1) & 2.2726518444260720(-2) \\ 
 & 9.7672821701962775(-1) & 1.6835845666401086(-2) \\ 
 & 9.9047567078037091(-1) & 1.0790014028473644(-2) \\ 
 & 9.9818651979791806(-1) & 4.6532712355365508(-3) \\ 
  \hline 
 \end{longtable}
 }

\begin{table}[htbp]
\centering \small
\caption{\rm Relative error of Gauss quadrature in \cref{exm2}.}  
\label{tab:err2}
\begin{tabular}{|c|c|c|c|c|}
\hline 
$N+1$ & $\varphi(x) $ & Relative error $R[\varphi]$ & $\varphi(x)$ & Relative error $R[\varphi]$ \\
\hline 
20 & $x^{-1/2}$  & 6.6613381477509392(-16) & $x^{-1/2} \log x$ & 1.3322676295501878(-15) \\ 
 & $x^{1/2}$ & 1.6653345369377348(-15) & $x^{1/2} \log x$ & 5.5511151231257827(-15) \\ 
 & $x^{3/2}$ & 4.4408920985006262(-16) & $x^{3/2} \log x$ & 8.8817841970012523(-16) \\ 
 & $x^{5/2}$ & 5.5511151231257827(-16) & $x^{5/2} \log x$ & 6.6613381477509392(-16) \\ 
 & $x^{7/2}$ & 5.5511151231257827(-16) & $x^{7/2} \log x$ & 9.9920072216264089(-16) \\ 
 & $x^{9/2}$ & 3.3306690738754696(-16) & $x^{9/2} \log x$ & 8.8817841970012523(-16) \\ 
 & $x^{11/2}$ & 2.2204460492503131(-16) & $x^{11/2} \log x$ & 1.3322676295501878(-15) \\ 
 & $x^{13/2}$ & 2.2204460492503131(-16) & $x^{13/2} \log x$ & 1.5543122344752192(-15) \\ 
 & $x^{15/2}$ & 6.6613381477509392(-16) & $x^{15/2} \log x$ & 2.1094237467877974(-15) \\ 
 & $x^{17/2}$ & 6.6613381477509392(-16) & $x^{17/2} \log x$ & 2.6645352591003757(-15) \\ 
 & $x^{19/2}$ & 8.8817841970012523(-16) & $x^{19/2} \log x$ & 2.8865798640254070(-15) \\ 
 & $x^{21/2}$ & 1.3322676295501878(-15) & $x^{21/2} \log x$ & 3.4416913763379853(-15) \\ 
 & $x^{23/2}$ & 1.7763568394002505(-15) & $x^{23/2} \log x$ & 3.7747582837255322(-15) \\ 
 & $x^{25/2}$ & 2.4424906541753444(-15) & $x^{25/2} \log x$ & 3.8857805861880479(-15) \\ 
 & $x^{27/2}$ & 2.6645352591003757(-15) & $x^{27/2} \log x$ & 4.2188474935755949(-15) \\ 
 & $x^{29/2}$ & 3.3306690738754696(-15) & $x^{29/2} \log x$ & 4.4408920985006262(-15) \\ 
 & $x^{31/2}$ & 3.7747582837255322(-15) & $x^{31/2} \log x$ & 4.7739590058881731(-15) \\ 
 & $x^{33/2}$ & 4.4408920985006262(-15) & $x^{33/2} \log x$ & 4.5519144009631418(-15) \\ 
  & $x^{35/2}$ & 5.1070259132757201(-15) & $x^{35/2} \log x$ & 4.5519144009631418(-15) \\ 
 & $x^{37/2}$ & 5.5511151231257827(-15) & $x^{37/2} \log x$ & 4.7739590058881731(-15) \\ 
 \hline 
 \end{tabular} 
 \end{table}

\begin{example}
In this example, we present an application of Gauss quadrature. Consider the integrand
\[
\psi(x) = \sin (4\pi x) + \frac{\log x (1-x)}{1+x}.
\]
Thus its exact integration value \cite{gradshteyn2014table} is
\begin{equation}
\label{integral1}
\int_0^1 \psi(x) \mathrm{d} x
= 
1 - \frac{\pi^2}{6}.
\end{equation}
Moreover, the integral of the Bessel function $J_0(x)$ is given by
\begin{equation}
\label{integral2}
\int_0^1 J_0(x) (1+\log x) \mathrm{d} x =  -0.0531080375895118730468486186978172 \cdots.
\end{equation}
For obtaining exactly the integrals in numerical, we choose $\beta=0$ and M\"{u}ntz sequences $\Lambda_{2N+1}$ that are categorized into three cases for a comparison.
\begin{itemize}
\item Case 1. $\lambda_k = k$ for $k=0,1,\cdots,2N+1$.
\item Case 2. $\lambda_k = \left \lfloor {k}/{2} \right \rfloor$ for $k=0,1,\cdots,2N+1$.
\item Case 3. $\lambda_k = \left \lfloor {k}/{3} \right \rfloor$ for $k=0,1,\cdots,2N+1$.
\end{itemize}
By \cref{repeated_indices}, it is evident that in case 1, the set $\hat{M}(\Lambda_{2N+1})$ exclusively consists of algebraic polynomials, that in case 2, it includes algebraic polynomials as well as algebraic polynomials multiplied by a logarithmic function, and that in case 3, the set comprises algebraic polynomials, algebraic polynomials multiplied by a logarithmic function, and algebraic polynomials multiplied by a squared logarithmic function. It is important to note that neither $\psi(x)$ nor $J_0(x)(1+\log x)$ belongs to $\hat{M}(\Lambda_{2N+1})$ in any of the three cases considered.

By employing \cref{alg:gauss_quadrature}, we can calculate the Gaussian nodes and weights required for the three quadrature rules $Q_N[\cdot]$ to approximate the integrals in \cref{integral1} and \cref{integral2}.
It is noteworthy that in case 1, the quadrature rule is equivalent to the classical Gauss-Legendre quadrature. The result of the approximations is depicted in \cref{fig:figure1}, where the term $Error$ represents the absolute error, and $N$ denotes the number of Gaussian nodes utilized.

In \cref{fig:figure1}, It can be observed that despite the singularity of both integrands $\psi(x)$ and $J_0(x)(1+\log x)$ at $0$, their integrals can be well approximated by the quadrature rules of case 2 and case 3, which is consistent with the result obtained in \cref{error_estimation}. However, classic Gauss-Legendre quadrature fails to provide accurate approximations.
\end{example}

\begin{figure}[htbp]
    \centering
    \caption{\rm A comparison of different Gauss quadrature rules.}
    \label{fig:figure1}
    \includegraphics[width=0.45\textwidth]{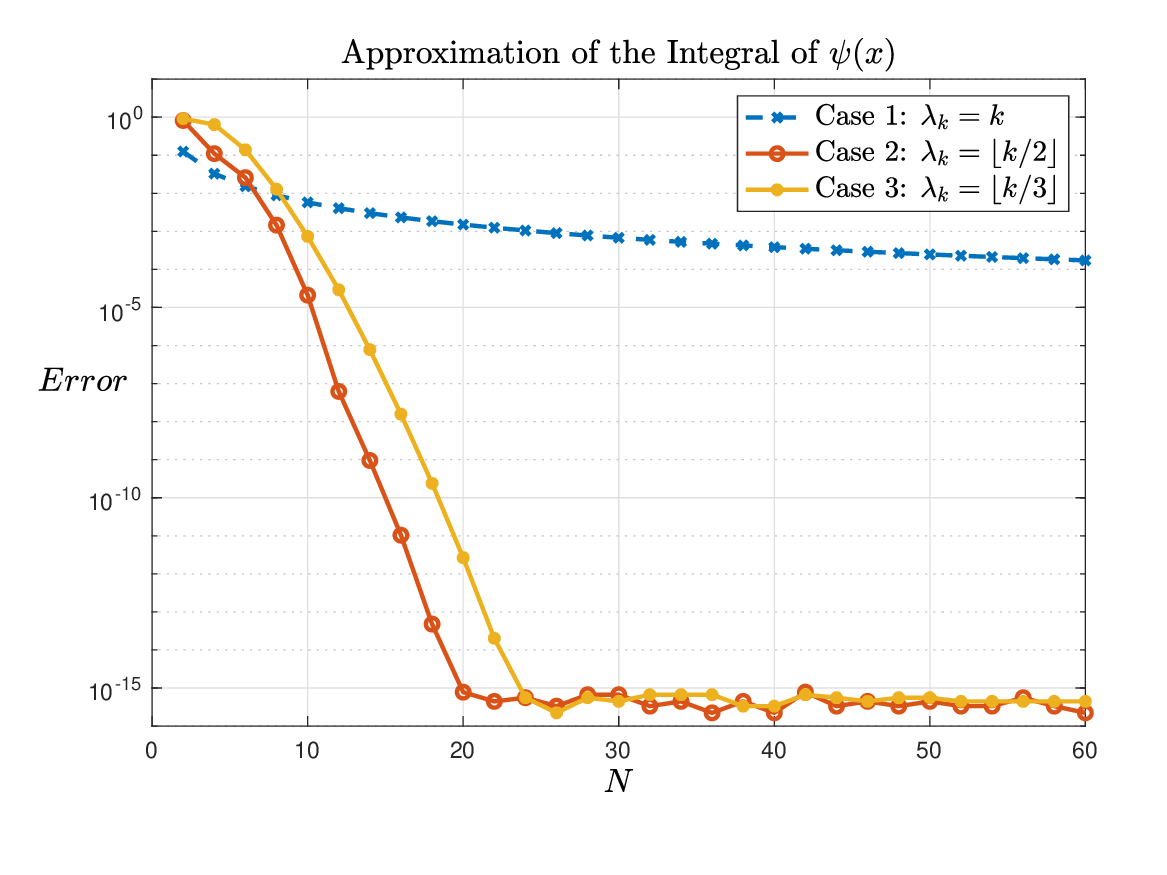}
    \hspace{0.02in}
    \includegraphics[width=0.45\textwidth]{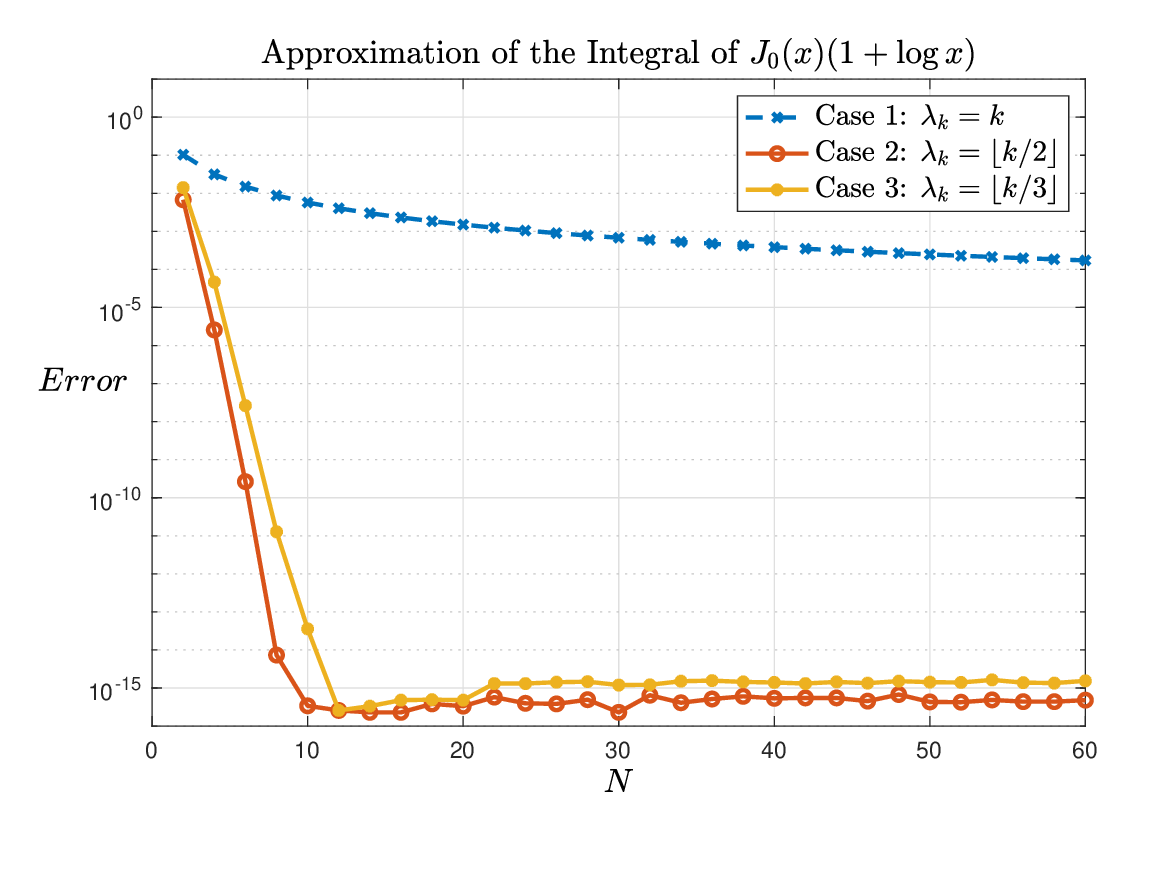}
\end{figure}

\bibliographystyle{siamplain}
\bibliography{quadrature}

\begin{thebibliography}{10}

\bibitem{allgower2012numerical}
{\sc E.~L. Allgower and K.~Georg}, {\em Numerical continuation methods: an
  introduction}, vol.~13, Springer Science \& Business Media, 2012.

\bibitem{almira2007muntz}
{\sc J.~M. Almira}, {\em M{\"u}ntz type theorems {I}}, arXiv preprint
  arXiv:0710.3570,  (2007).

\bibitem{borwein1994muntz}
{\sc P.~Borwein, T.~Erd{\'e}lyi, and J.~Zhang}, {\em {M{\"u}ntz} systems and
  orthogonal {M{\"u}ntz-Legendre} polynomials}, Transactions of the American
  Mathematical Society, 342 (1994), pp.~523--542.

\bibitem{rokhlin2010nonlinear}
{\sc J.~Bremer, Z.~Gimbutas, and V.~Rokhlin}, {\em A nonlinear optimization
  procedure for generalized {Gaussian} quadratures}, SIAM Journal on Scientific
  Computing, 32 (2010), pp.~1761--1788.

\bibitem{chen2021explicit}
{\sc P.~Chen and X.~Li}, {\em Explicit gaussian quadrature rules for c 1 cubic
  splines with non-uniform knot sequences}, Communications in Mathematics and
  Statistics, 9 (2021), pp.~331--345.

\bibitem{rokhlin1999nonlinear}
{\sc H.~Cheng, V.~Rokhlin, and N.~Yarvin}, {\em Nonlinear optimization,
  quadrature, and interpolation}, SIAM Journal on Optimization, 9 (1999),
  pp.~901--923.

\bibitem{cools2003}
{\sc R.~Cools and A.~Haegemans}, {\em Algorithm 824: {CUBPACK}: A package for
  automatic cubature; framework description}, ACM Transactions on Mathematical
  Software (TOMS), 29 (2003), pp.~287--296.

\bibitem{gautschi2011numerical}
{\sc W.~Gautschi}, {\em Numerical analysis}, Springer Science \& Business
  Media, 2011.

\bibitem{glaser2007fast}
{\sc A.~Glaser, X.~Liu, and V.~Rokhlin}, {\em A fast algorithm for the
  calculation of the roots of special functions}, SIAM Journal on Scientific
  Computing, 29 (2007), pp.~1420--1438.

\bibitem{golub1969calculation}
{\sc G.~H. Golub and J.~H. Welsch}, {\em Calculation of {Gauss} quadrature
  rules}, Mathematics of computation, 23 (1969), pp.~221--230.

\bibitem{gradshteyn2014table}
{\sc I.~S. Gradshteyn and I.~M. Ryzhik}, {\em Table of integrals, series, and
  products}, Academic press, 2014.

\bibitem{huybrechs2009generalized}
{\sc D.~Huybrechs and R.~Cools}, {\em On generalized {Gaussian} quadrature
  rules for singular and nearly singular integrals}, SIAM journal on numerical
  analysis, 47 (2009), pp.~719--739.

\bibitem{jorge2006}
{\sc N.~Jorge and J.~W. Stephen}, {\em Numerical optimization}, 2006.

\bibitem{joseph2013}
{\sc J.~R. JOSEPH}, {\em First Course in Abstract Algebra: with Applications},
  PRENTICE HALL PTR, 2013.

\bibitem{karlin1966}
{\sc S.~Karlin and W.~J. Studden}, {\em Tchebycheff systems: With applications
  in analysis and statistics}, vol.~15, Interscience Publishers, 1966.

\bibitem{lagarias1998convergence}
{\sc J.~C. Lagarias, J.~A. Reeds, M.~H. Wright, and P.~E. Wright}, {\em
  Convergence properties of the {Nelder--Mead} simplex method in low
  dimensions}, SIAM Journal on optimization, 9 (1998), pp.~112--147.

\bibitem{li2001}
{\sc Q.~Li}, {\em Numerical Analysis (in Chinese)}, Tsinghua University Press,
  2001.

\bibitem{lombardi2009}
{\sc G.~Lombardi}, {\em Design of quadrature rules for {M{\"u}ntz} and
  {M{\"u}ntz}-logarithmic polynomials using monomial transformation},
  International journal for numerical methods in engineering, 80 (2009),
  pp.~1687--1717.

\bibitem{ma1996generalized}
{\sc J.~Ma, V.~Rokhlin, and S.~Wandzura}, {\em Generalized {Gaussian}
  quadrature rules for systems of arbitrary functions}, SIAM Journal on
  Numerical Analysis, 33 (1996), pp.~971--996.

\bibitem{milovanovic1999muntz}
{\sc G.~V. Milovanovi{\'c}}, {\em M{\"u}ntz orthogonal polynomials and their
  numerical evaluation}, in Applications and Computation of Orthogonal
  Polynomials: Conference at the Mathematical Research Institute Oberwolfach,
  Germany March 22--28, 1998, Springer, 1999, pp.~179--194.

\bibitem{milovanovic2017computing}
{\sc G.~V. Milovanovi{\'c}}, {\em Computing integrals of highly oscillatory
  special functions using complex integration methods and {Gaussian}
  quadratures}, Dolomites Research Notes on Approximation, 10 (2017).

\bibitem{milovanovic2005gaussian}
{\sc G.~V. Milovanovic and A.~S. Cvetkovic}, {\em Gaussian-type quadrature
  rules for {M{\"u}ntz} systems}, SIAM Journal on Scientific Computing, 27
  (2005), pp.~893--913.

\bibitem{milovanovic2008trigonometric}
{\sc G.~V. Milovanovi{\'c}, A.~S. Cvetkovi{\'c}, and M.~P. Stani{\'c}}, {\em
  Trigonometric orthogonal systems and quadrature formulae}, Computers \&
  Mathematics with Applications, 56 (2008), pp.~2915--2931.

\bibitem{powell1981approximation}
{\sc M.~J.~D. Powell et~al.}, {\em Approximation theory and methods}, Cambridge
  university press, 1981.

\bibitem{rudin1976principles}
{\sc W.~Rudin et~al.}, {\em Principles of mathematical analysis}, vol.~3,
  McGraw-hill New York, 1976.

\bibitem{schwab1994}
{\sc C.~Schwab}, {\em Variable order composite quadrature of singular and
  nearly singular integrals}, Computing, 53 (1994), pp.~173--194.

\bibitem{stein2010complex}
{\sc E.~M. Stein and R.~Shakarchi}, {\em Complex analysis}, vol.~2, Princeton
  University Press, 2010.

\bibitem{taslakyan1984}
{\sc A.~Taslakyan}, {\em Some properties of {Legendre} quasi-polynomials with
  respect to a {M{\"u}ntz} system}, Mathematics, 2 (1984), pp.~179--189.

\bibitem{trefethen2022numerical}
{\sc L.~N. Trefethen and D.~Bau}, {\em Numerical linear algebra}, vol.~181,
  Siam, 2022.

\bibitem{rokhlin1998generalized}
{\sc N.~Yarvin and V.~Rokhlin}, {\em Generalized gaussian quadratures and
  singular value decompositions of integral operators}, SIAM Journal on
  Scientific Computing, 20 (1998), pp.~699--718.

\end{thebibliography}
\end{document}